\theoremstyle{plain}
\newtheorem{theorem}					{Theorem}[section]
\newtheorem{lemma}			[theorem]	{Lemma}
\newtheorem{corollary}		[theorem]	{Corollary}
\newtheorem{proposition}	[theorem]	{Proposition}
\theoremstyle{definition}
\newtheorem{definition}		[theorem]	{Definition}
\theoremstyle{remark}
\newtheorem*{exa*}					{Example}
\newtheorem*{rem*}					{Remark}
\DeclareMathOperator	{\IR}		{\mathbb{R}}
\DeclareMathOperator	{\IE}		{\mathbb{E}} 
\DeclareMathOperator	{\IP}		{\mathbb{P}} 
\DeclareMathOperator	{\IN}		{\mathbb{N}} 
\DeclarePairedDelimiter	{\abs}		{\lvert}	{\rvert}
\DeclarePairedDelimiter	{\norm}		{\lVert}	{\rVert}
\newcommand				{\eins}		{\text{$\mathbbm{1}$}}
\newcommand		    	{\dpartial}	{\mathfrak{d}}
\renewcommand 			{\epsilon}	{\varepsilon}
\renewcommand			{\phi}		{\varphi}
\renewcommand			{\tilde}	{\widetilde}
\numberwithin			{equation}{section}
\begin{document}
\title{Concentration inequalities on the multislice and for sampling without replacement}
\thanks{This research was supported by the German Research Foundation (DFG) via CRC 1283 ``\textit{Taming uncertainty and profiting from randomness and low regularity in analysis, stochastics and their applications}''.}
\author{Holger Sambale$^1$ and Arthur Sinulis$^1$}
\address{$^1$Faculty of Mathematics, Bielefeld University, Bielefeld, Germany}
\email{\{hsambale, asinulis\}@math.uni-bielefeld.de}
\keywords{concentration of measure, convex distance inequality, Erd\H{o}s--R\'{e}nyi graphs, multislice, sampling without replacement}

\begin{abstract}
We present concentration inequalities on the multislice which are based on (modified) log-Sobolev inequalities. This includes bounds for convex functions and multilinear polynomials. As an application we show concentration results for the triangle count in the $G(n,M)$ Erd\H{o}s--R\'{e}nyi model resembling known bounds in the $G(n,p)$ case. Moreover, we give a proof of Talagrand's convex distance inequality for the multislice.

Interpreting the multislice in a sampling without replacement context, we furthermore present concentration results for $n$ out of $N$ sampling without replacement. Based on a bounded difference inequality involving the finite-sampling correction factor $1- n/N$, we present an easy proof of Serfling's inequality with a slightly worse factor in the exponent, as well as a sub-Gaussian right tail for the Kolmogorov distance between the empirical measure and the true distribution of the sample.
\end{abstract}
\maketitle

\section{Introduction}\label{Section:Introduction}
In the past few years, in particular in the analysis of Boolean functions, a model which has found emerging interest is the \emph{multislice}. It can be regarded as a natural generalization of several well-known models like slices of the hypercube. In detail, let $L \ge 2$ be a natural number, $\kappa = (\kappa_1, \ldots, \kappa_L) \in \mathbb{N}^L$ (where by convention, $0 \notin \mathbb{N}$), $N \coloneqq \kappa_1 + \cdots + \kappa_L$, and let $\mathcal{X} = \{x_1, \ldots, x_L\} \subset \IR$ be a set of $L$ distinct real numbers. The multislice is defined as
\[
    \Omega_\kappa \coloneqq \Big\{\omega = (\omega_1, \ldots, \omega_N) \in \mathcal{X}^N \colon \sum_{i=1}^N \eins_{\{\omega_i = x_\ell\}} = \kappa_\ell\, \text{ for } \ell = 1, \ldots, L\Big\}.
\]
In other words, any $\omega \in \Omega_\kappa$ is a sequence of elements from $\{x_1, \ldots, x_L\}$ in which each feature $x_\ell$ appears exactly $\kappa_\ell$ times. In the context of sampling without replacement, it describes the procedure of (fully) sampling from a population with a set of characteristics $\{x_1, \ldots, x_L\}$, such that a proportion of $\kappa_\ell / N$ of the population has characteristic $x_\ell$. We discuss and extend this relation in Section \ref{sect:swor}.

To gain an intuition into the multislice let us consider some special choices of $L$ and $\kappa$. For $L=2$ and $\kappa = (k,N-k)$, the multislice reduces to $k$-slices on the hypercube, while the case of $L=N$ and $\kappa = (1, \ldots, 1)$ can be interpreted as the symmetric group $S_N$. If $L=2$, $\Omega_\kappa$ can be interpreted as all possible realizations of an Erd\H{o}s--R\'{e}nyi random graph (see Proposition \ref{prop:ERDr} below for more details).
Moreover, the multislice gives rise to a Markov chain known as the \emph{multi-urn Bernoulli--Laplace diffusion model}, but we will not pursue this aspect. For examples, see \cite{Sal20}.

We equip $\Omega_\kappa$ with the uniform distribution which we denote by $\IP_\kappa$ or sometimes also simply $\IP$. In other words,
\[
\IP_\kappa(\{\omega\}) = \abs{\Omega_\kappa}^{-1} = \binom{N}{\kappa_1, \ldots, \kappa_L}^{-1}
\]
for any $\omega \in \Omega_\kappa$. If $f$ is any real-valued function on $\Omega_\kappa$, we write $\mathbb{E}_\kappa f$ or $\mathbb{E}f$ for its expectation with respect to $\IP_\kappa$. Moreover, to fix some conventions, we always assume the $x_\ell$ to be ordered such that $x_1 < x_2 < \ldots < x_L$. In particular, $\abs{\mathcal{X}} \coloneqq x_L - x_1$ denotes the diameter of $\mathcal{X}$. Furthermore, we shall write $\kappa_{\mathrm{min}} \coloneqq \min\{\kappa_1, \ldots, \kappa_L\}$. Finally, for any $1 \le i \ne j \le N$, let $\tau_{ij}$ be the ``switch'' operator which switches the $i$-th and $j$-th component of the vector $\omega$. In other words, $\tau_{ij}$ transforms $\omega$ into the vector $\tau_{ij}\omega$ given by
\begin{equation}\label{switchop}
\tau_{ij}\omega = (\omega_1, \ldots, \omega_{i-1}, \omega_j, \omega_{i+1}, \ldots, \omega_{j-1}, \omega_i, \omega_{j+1}, \ldots, \omega_N).
\end{equation}

Multislices equipped with the uniform measure were also considered in earlier works. Logarithmic Sobolev inequalities were proven in \cite{FOW19, Sal20}, while in \cite{Fil20}, the Friedgut--Kalai--Naor (FKN) theorem was extended to the multislice. We shall make use of the functional inequalities proven by Salez \cite{Sal20} to apply the entropy method and prove concentration inequalities in the above-mentioned settings.

\subsection{Concentration inequalities for various types of functionals}\label{sect:CISpF}
In the first section, we present concentration inequalities for some functions on the multislice which are comparable to known concentration results in the independent case. We begin with a number of elementary inequalities. 

\begin{proposition}\label{elementaryresults}
\begin{enumerate}
    \item Let $f \colon \Omega_\kappa \to \IR$ be a function such that $\abs{f(\omega) - f(\tau_{ij}\omega)} \le c_{ij}$ for all $\omega \in \Omega_\kappa$, all $1 \le i < j \le N$ and suitable constants $c_{ij} \ge 0$. For any $t \ge 0$, we have
    \begin{equation}\label{bddiffms}
    \IP_\kappa (f - \IE_\kappa f \ge t) \le \exp\Big(- \frac{Nt^2}{4\sum_{1 \le i < j \le N}c_{ij}^2}\Big).
    \end{equation}
    \item Let $f \colon [x_1, x_L]^N \to \IR$ be convex and $1$-Lipschitz. Then, for any $t \ge 0$ we have
    \begin{equation}\label{conv+1Lip}
    \IP_\kappa (f-\IE_\kappa f \ge t) \le \exp\Big(-\frac{t^2}{16\abs{\mathcal{X}}^2}\Big).
    \end{equation}
\end{enumerate}
\end{proposition}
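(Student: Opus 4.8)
The plan is to obtain both statements from the entropy method (Herbst's argument) applied on top of the logarithmic Sobolev inequality for the multislice due to Salez \cite{Sal20}. In the normalization convenient here, this inequality takes the form
\[
\mathrm{Ent}_\kappa(g^2) \le \frac{c}{N}\sum_{1\le i<j\le N}\IE_\kappa\big(g(\omega)-g(\tau_{ij}\omega)\big)^2
\]
for every $g\colon\Omega_\kappa\to\IR$, with an absolute constant $c$ and the switch operators $\tau_{ij}$ from \eqref{switchop}; the right-hand side is the Dirichlet form of the walk that swaps a uniformly chosen pair of coordinates. We shall use throughout that each $\tau_{ij}$ preserves $\IP_\kappa$, so $\IE_\kappa h(\omega)=\IE_\kappa h(\tau_{ij}\omega)$ for all $h$; this invariance is precisely what lets us symmetrize and cancel the exponential weights that appear below.

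For part (1), fix $\lambda\in\IR$ and apply the inequality to $g=e^{\lambda f/2}$. By the elementary bound $(e^{a}-e^{b})^2\le (a-b)^2(e^{2a}+e^{2b})$ and the hypothesis $\abs{f(\omega)-f(\tau_{ij}\omega)}\le c_{ij}$, each summand is at most $\tfrac{\lambda^2c_{ij}^2}{4}\big(e^{\lambda f(\omega)}+e^{\lambda f(\tau_{ij}\omega)}\big)$, and taking expectations and using the $\tau_{ij}$-invariance of $\IP_\kappa$ this is $\tfrac{\lambda^2c_{ij}^2}{2}\IE_\kappa e^{\lambda f}$. Summing over $i<j$ gives a modified log-Sobolev inequality $\mathrm{Ent}_\kappa(e^{\lambda f})\le \tfrac{c\lambda^2}{2N}\big(\sum_{i<j}c_{ij}^2\big)\IE_\kappa e^{\lambda f}$. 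The standard Herbst argument then turns this into $\log\IE_\kappa e^{\lambda(f-\IE_\kappa f)}\le C\lambda^2\big(\sum_{i<j}c_{ij}^2\big)/N$ for all $\lambda$, and a Chernoff bound optimized over $\lambda>0$ yields \eqref{bddiffms}; the explicit constant $4$ reflects Salez's value of $c$ together with the mild losses above.

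For part (2) the scheme is the same, but the only delicate point is that the Dirichlet form of $g=e^{\lambda f/2}$ must be estimated without any loss of dimension — the naive bound $\abs{f(\omega)-f(\tau_{ij}\omega)}\le\sqrt{2}\abs{\mathcal X}$ plugged into part (1) would produce a spurious factor $N$. This is where convexity enters. Since $f$ is convex and $1$-Lipschitz, at each point $\omega$ there is a subgradient $v\in\partial f(\omega)$ with $\norm{v}\le 1$, and as $\omega-\tau_{ij}\omega=(\omega_i-\omega_j)(e_i-e_j)$ we get
\[
f(\omega)-f(\tau_{ij}\omega)\le\skal{v}{\omega-\tau_{ij}\omega}=(\omega_i-\omega_j)(v_i-v_j),
\]
and symmetrically for $f(\tau_{ij}\omega)-f(\omega)$ with a subgradient at $\tau_{ij}\omega$. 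Running the computation of part (1) with these bounds — estimating $(e^a-e^b)^2$, using $\tau_{ij}$-invariance to move all gradients to $\omega$, and bounding $\abs{\omega_i-\omega_j}\le\abs{\mathcal X}$ — controls the Dirichlet form of $g$ by $\IE_\kappa\big[e^{\lambda f}\,\abs{\mathcal X}^2\sum_{i<j}(v_i-v_j)^2\big]$ up to constants. Now $\sum_{1\le i<j\le N}(v_i-v_j)^2\le N\norm{v}^2\le N$, and this factor $N$ exactly cancels the $\tfrac{1}{N}$ (equivalently the $\tfrac{1}{\binom N2}$ in the Dirichlet form). Hence $\mathrm{Ent}_\kappa(e^{\lambda f})\le C\lambda^2\abs{\mathcal X}^2\IE_\kappa e^{\lambda f}$ with $C$ independent of $N$ and $\kappa$, and Herbst plus Chernoff conclude as before, giving \eqref{conv+1Lip}.

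The Herbst/Chernoff bookkeeping and the exponential estimates are routine; the one load-bearing step is the Dirichlet-form bound in part (2), where one must trade the discrete switch differences for the Euclidean (sub)gradient via convexity and then verify that the combinatorial sum $\sum_{i<j}(v_i-v_j)^2$ is only of order $N\norm{\nabla f}^2$, so that the dimension drops out. Part (1), by contrast, is essentially a direct consequence of Salez's inequality once the bounded-difference hypothesis is fed into the machine.
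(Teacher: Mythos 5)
Your overall route is the paper's route: derive a bound on $\mathrm{Ent}_{\IP_\kappa}(e^{\lambda f})$ from Salez's functional inequalities, run Herbst and Chernoff, and for part (2) trade the switch differences for the Euclidean subgradient via convexity and use $\sum_{1\le i<j\le N}(v_i-v_j)^2\le N\abs{v}^2$ to make the dimension drop out. The subgradient step and that combinatorial identity are exactly the content of the paper's Proposition \ref{convexnablaLSI}, and part (1) is the paper's combination of the modified log-Sobolev inequality with the pointwise bound $\Gamma(f)\le(\frac{1}{2N}\sum_{i<j}c_{ij}^2)^{1/2}$.

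The one genuine problem is your starting inequality. A log-Sobolev inequality of the form
\[
\mathrm{Ent}_{\IP_\kappa}(g^2)\le \frac{c}{N}\sum_{1\le i<j\le N}\IE_\kappa\big(g(\omega)-g(\tau_{ij}\omega)\big)^2
\]
with an \emph{absolute} constant $c$ is false on the multislice. Salez's LSI (Proposition \ref{prop:LSImultislice}) carries the constant $2\log(N/\kappa_{\mathrm{min}})/\log 2$; for $\kappa=(1,\ldots,1)$, i.e.\ the symmetric group, this is of order $\log N$, and by the Lee--Yau lower bound this order is sharp, so no dimension-free LSI constant is available. Only the \emph{modified} LSI holds with an absolute constant. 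If you literally feed the true LSI into your computation, the exponents in \eqref{bddiffms} and \eqref{conv+1Lip} degrade by a factor $\log(N/\kappa_{\mathrm{min}})$ (this is precisely the weaker constant the paper records in the remark after Lemma \ref{newmomest}). Fortunately your argument never uses the LSI except through $g=e^{\lambda f/2}$ together with the elementary bound $(e^a-e^b)^2\le(a-b)^2(e^{2a}+e^{2b})$, and that combination is exactly the passage from Salez's Lemma 1 to the $\Gamma\mathrm{-mLSI}(4)$ (resp.\ $\Gamma^+\mathrm{-mLSI}(8)$) carried out in the proof of Proposition \ref{prop:LSImultislice}. Replace your opening display by that lemma, or by the modified LSI itself, and everything you wrote goes through with absolute constants; the remainder, including the symmetrization of the subgradient bound via $\tau_{ij}$-invariance in part (2), is sound.
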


Proposition \ref{elementaryresults} follows by a classic approach of Ledoux \cite{Led97} (the entropy method), i.\,e.\ by exploiting suitable log-Sobolev-type inequalities, some of which might be of independent interest (cf.\ Propositions \ref{prop:LSImultislice} and \ref{convexnablaLSI}). Note that the bounded differences-type inequality \eqref{bddiffms} is invariant under the change $f \mapsto -f$, so that in particular, this result extends to the concentration inequality
\begin{equation}\label{bddiffci}
\IP_\kappa (\abs{f - \IE_\kappa f} \ge t) \le 2\exp\Big(- \frac{Nt^2}{4\sum_{1 \le i < j \le N}c_{ij}^2}\Big).
\end{equation}
By contrast, \eqref{conv+1Lip} clearly does not hold for $-f$ in general, but by different techniques discussed in Section \ref{sect:TCD}, this result can be extended to the lower tails as well.

While results for Lipschitz-type functions as in Proposition \ref{elementaryresults} are fairly standard in concentration of measure theory, in the past decade there has been increasing interest in non-Lipschitz functions. A case in point are so-called \emph{multilinear polynomials}, i.\,e.\ polynomials which are affine with respect to every variable. Clearly, any multilinear polynomial $f = f(\omega)$ of degree $d$ may be written as
\begin{equation}\label{homogmlp}
f(\omega) = a_0 + \sum_{i_1=1}^N a_{i_1}\omega_{i_1} + \cdots + \sum_{i_1 < \ldots < i_d} a_{i_1 \ldots i_d} \omega_{i_1} \cdots \omega_{i_d}.
\end{equation}

Typically, multilinear polynomials of degree $d \ge 2$ no longer have sub-Gaussian tails, but the tails show different regimes or levels of decay, corresponding to a larger family of norms of the tensors of derivatives $\nabla^k f$, $k = 1, \ldots, d$. For $t$ large, terms of the form $\exp(-(t/\beta_d)^{2/d})$ dominate, where $\beta_d$ depends on the $d$-th order derivatives. Tail inequalities of this type are also called \emph{multilevel tail inequalities}, a term phrased by Adamczak \cite{ABW17, AKPS18}.

In detail, we need a family of norms $\norm{\cdot}_{\mathcal{I}}$ on the space of $d$-tensors for each partition $\mathcal{I} = \{ I_1, \ldots, I_k \} \in P_d$, where $P_d$ denotes the set of all partitions of $\{1,\ldots,d\}$. For any $1 \le i_1, \ldots, i_d \le N$ and any subset $I \subset \{1, \ldots, d\}$, we write $i_I = (i_k)_{k \in I}$, and for each $\ell = 1,\ldots, k$ we denote by $x^{(\ell)}$ a vector in $\IR^{N^{I_\ell}}$. Then, for a $d$-tensor $A = (a_{i_1, \ldots, i_d})$ and a partition $\mathcal{I} \in P_d$, we set
\[
\norm{A}_{\mathcal{I}} \coloneqq \sup \Big\lbrace \sum_{i_1, \ldots, i_d} a_{i_1 \ldots i_d} \prod_{\ell = 1}^k x^{(\ell)}_{i_{I_\ell}} : \sum_{i_{I_\ell}} (x^{(\ell)}_{i_{I_\ell}})^2 \le 1 \text{ for all } \ell = 1, \ldots, k\Big\rbrace.
\]

The family $\norm{\cdot}_{\mathcal{I}}$ was first introduced in \cite{La06}, where it was used to prove two-sided estimates for $L^p$ norms of Gaussian chaos, and the definitions given above agree with the ones from \cite{La06} as well as \cite{AW15} and \cite{AKPS18}. We can regard the $\norm{A}_{\mathcal{I}}$ as a family of operator-type norms. In particular, it is easy to see that $\norm{A}_{\{1, \ldots, d\}} = \norm{A}_\mathrm{HS} \coloneqq (\sum_{i_1, \ldots, i_d} a_{i_1 \ldots i_d}^2)^{1/2}$ (Hilbert--Schmidt norm) and $\norm{A}_{\{\{1\}, \ldots, \{d\}\}} = \norm{A}_\mathrm{op} \coloneqq \sup \{ \sum_{i_1, \ldots, i_d} a_{i_1 \ldots i_d} x^{(1)}_{i_1} \cdots x^{(d)}_{i_d} : \abs{x_{i^{(\ell)}}} \le 1 \text{ for all } \ell = 1, \ldots, d \}$ (operator norm).

\begin{theorem}\label{multlinpol}
Let $f = f(\omega)$ be a multilinear polynomial \eqref{homogmlp} of degree $d$. There exists a constant $c = c(d)$ such that
\[
\IP_\kappa (\abs{f - \IE_\kappa f} \ge t) \le 2 \exp\Big(-c \min_{1\le k \le d} \min_{\mathcal{I} \in P_k}\Big(\frac{t}{\abs{\mathcal{X}}^k\norm{\IE_\kappa \nabla^k f}_\mathcal{I}}\Big)^{2/\abs{\mathcal{I}}}\Big).
\]
\end{theorem}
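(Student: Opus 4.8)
The strategy is to run the entropy method (Ledoux's approach) with the modified log-Sobolev inequality for the multislice that we have already recorded (Proposition \ref{prop:LSImultislice}). The key quantity controlling a functional $g$ via the switch operators is the discrete gradient $\mathfrak{d}_{ij}g(\omega) = g(\omega) - g(\tau_{ij}\omega)$, and the $\log$-Sobolev inequality will bound $\mathrm{Ent}_\kappa(e^{\lambda f})$ in terms of an expression like $\tfrac{c}{N}\sum_{i<j}\IE_\kappa\big[e^{\lambda f}\,\psi(\lambda\,\mathfrak{d}_{ij}f)\big]$ for a suitable function $\psi$. Applying this to a multilinear polynomial $f$ of degree $d$ yields a differential inequality for the log-Laplace transform $H(\lambda) = \log\IE_\kappa e^{\lambda(f - \IE_\kappa f)}$ whose right-hand side involves the switch-differences $\mathfrak{d}_{ij}f$. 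The first main step is therefore a \emph{combinatorial/algebraic lemma} expressing $\mathfrak{d}_{ij}f$ again as a (lower-degree) multilinear-type object: switching coordinates $i$ and $j$ changes $f$ by $(\omega_i - \omega_j)$ times a partial derivative expression, so $\sum_{i<j}(\mathfrak{d}_{ij}f)^2$ is controlled, up to $\abs{\mathcal{X}}^2$, by $\norm{\nabla f}_2^2 = \sum_i (\partial_i f)^2$ — and more generally iterated differences are controlled by $\abs{\mathcal{X}}^{2k}$ times squared norms of $\nabla^k f$.

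The second step is to set up the induction on the degree $d$. After plugging the gradient bound into the modified $\log$-Sobolev inequality, one is left with controlling moments or exponential moments of the \emph{random} quantity $\norm{\nabla f(\omega)}$ (and its higher analogues), which is itself a multilinear polynomial of degree $d-1$ in $\omega$ (for each fixed choice of the auxiliary unit vectors in the definition of $\norm{\cdot}_\mathcal{I}$). This is exactly the mechanism behind multilevel tail inequalities: one decouples the "operator norm" part (the supremum over unit vectors, handled by a chaining/net argument or by the algebraic structure of the $\norm{\cdot}_\mathcal{I}$ norms as in Lata\l{}a \cite{La06} and Adamczak--Wolff \cite{AW15}) from the "random" part, and then applies the induction hypothesis to the lower-degree polynomial $\omega \mapsto \langle \nabla f(\omega), x\rangle$. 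Tracking constants carefully, the partition norms $\norm{\IE_\kappa\nabla^k f}_\mathcal{I}$ arise precisely from iterating this decoupling $k$ times and then taking expectations, with the factor $\abs{\mathcal{X}}^k$ coming from the $k$-fold application of the switch-difference bound of step one.

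A clean alternative that avoids re-deriving the whole chaos machinery: transfer the problem to the independent setting and quote the known polynomial concentration result there. Concretely, a function on $\Omega_\kappa$ can be realized as a function of $N$ i.i.d.\ uniform variables composed with a sorting/ranking map, or one can compare $\IP_\kappa$ with the product measure via the standard tensorization-with-conditioning argument; the relevant result for multilinear polynomials under product measures (Adamczak--Wolff \cite{AW15}, Adamczak--Kotowski--Polaczyk--Strzelecki \cite{AKPS18}) then yields the bound with the same family of norms, and one only needs to check that the derivative tensors of the transferred function have partition norms comparable (up to $\abs{\mathcal{X}}^k$ and dimension-free constants) to those of $\nabla^k f$. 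The main obstacle in either route is the same: the \emph{bookkeeping of the norms} — showing that the quantities naturally produced by iterating the log-Sobolev/difference estimate really are dominated by $\norm{\IE_\kappa\nabla^k f}_\mathcal{I}$ for \emph{every} partition $\mathcal{I}\in P_k$ and not just the extreme ones (Hilbert--Schmidt and operator), and doing so with a constant depending only on $d$. I expect this step — the inductive identification of the partition-norm hierarchy — to be where essentially all the work lies; the passage from the differential inequality to the stated tail bound via the minimum over $k$ and over $\mathcal{I}$ is then the routine optimization already familiar from the product-measure case.
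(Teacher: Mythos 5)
There is a genuine gap at the heart of your plan: the ``combinatorial/algebraic lemma'' you propose in step one is false. You claim that $\sum_{i<j}(\mathfrak{d}_{ij}f)^2$ is controlled, up to $\abs{\mathcal{X}}^2$, by $\abs{\nabla f}^2$, on the grounds that switching $i$ and $j$ changes $f$ by $(\omega_i-\omega_j)$ times a first-derivative expression. For a multilinear polynomial one actually has
\[
f(\omega)-f(\tau_{ij}\omega)=(\omega_i-\omega_j)\big(\partial_i f(\omega)-\partial_j f(\omega)\big)+(\omega_i-\omega_j)^2\,\partial_{ij}f(\omega),
\]
and the second-order term cannot be dropped. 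The paper exhibits an explicit counterexample to your claimed inequality: for $N=3$, $\mathcal{X}=\{0,1\}$, $f(\omega)=\omega_1\omega_2-\omega_1\omega_3$ and $\omega=(0,1,1)$ one has $\nabla f(\omega)=0$ while $\Gamma(f)(\omega)>0$, so no bound of the form $\Gamma(f)\le c\abs{\nabla f}$ can hold. The correct substitute (Proposition \ref{prop:do+der} in the paper) is
\[
\Gamma(f)^2 \le \tfrac{3}{2}\abs{\mathcal{X}}^2\abs{\nabla f}^2+\tfrac{3}{4N}\abs{\mathcal{X}}^4\norm{\nabla^2 f}_{\mathrm{HS}}^2,
\]
and carrying the extra Hilbert--Schmidt term through the iteration is where real additional work is needed: the paper linearizes both terms with independent Gaussians ($N$- and $N^2$-dimensional), iterates the resulting moment inequality $d-1$ times while tracking the $(2N)^{-\ell/2}$ prefactors, and checks that the spurious higher-order terms either vanish (derivatives of order $>d$) or are reabsorbed into the partition norms already present. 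Your induction as described would not close without this correction.

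Two further points. First, the engine of the paper's proof is a moment comparison $\norm{f-\IE_\kappa f}_{L^p}\le\sqrt{4\theta p}\,\norm{\Gamma(f)}_{L^p}$ obtained from Beckner inequalities, combined with Lata\l{}a's two-sided moment estimates for Gaussian chaoses to produce the full hierarchy of partition norms $\norm{\cdot}_{\mathcal I}$; your proposed differential inequality for the log-Laplace transform does not iterate cleanly to degree $d\ge 2$, which is exactly why the moment route is used in \cite{AW15,AKPS18} as well. Second, your ``clean alternative'' of transferring to a product measure via a sorting/ranking map is not viable as stated: such a composition destroys the multilinear structure, and no argument is given that the derivative tensors of the transferred function have comparable partition norms. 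You would need to either make that comparison precise or abandon this route.
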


Theorem \ref{multlinpol} is an analogue of \cite[Theorem 1.4]{AW15} (independent sub-Gaussian random variables), \cite[Theorem 2.2]{AKPS18} (the Ising model), \cite[Theorem 5]{GSS18b} (in presence of certain discrete log-Sobolev inequalities) and \cite[Corollary 5.4]{APS20} (modified log-Sobolev inequalities for Glauber dynamics) for the multislice.

For the sake of illustration, consider the case of $d=2$ and a quadratic form $f(\omega) = \sum_{i<j} a_{ij}\omega_i\omega_j = \omega^TA\omega/2$, where $A$ is a symmetric matrix with vanishing diagonal and entries $A_{ij} = a_{ij} = A_{ji}$ for any $i < j$. Let us additionally assume that $\IE_\kappa \omega_i = 0$ for any $i$. In this case, we obviously have $\IE_\kappa \nabla f = 0$ and $\IE_\kappa \nabla^2 f = A$. Consequently, the conclusion of Theorem \ref{multlinpol} reads
\[
\IP_\kappa (\abs{f - \IE_\kappa f} \ge t) \le 2 \exp\Big(-c \min \Big(\frac{t^2}{\abs{\mathcal{X}}^4\norm{A}_\mathrm{HS}^2}, \frac{t}{\abs{\mathcal{X}}^2\norm{A}_\mathrm{op}}\Big)\Big),
\]
showing a version of the famous Hanson--Wright inequality for the multislice (cf.\ \cite{HW71}). As an alternate strategy of proof, in Section \ref{sect:TCD} we derive Talagrand's convex distance inequality for the multislice, which in particular yields Hanson--Wright inequalities by \cite{AW15} (where results of this type have already been established for sampling without replacement.) Theorem \ref{multlinpol} may be seen as a generalization of these bounds to any order $d \in \mathbb{N}$.

Possible applications include the Erd\H{o}s--R\'{e}nyi model, which features random graphs with a fixed number of vertices $n$. There are two variants of the Erd\H{o}s--R\'{e}nyi model which are often labeled $G(n,p)$ and $G(n,M)$. In the $G(n,p)$ model, each possible edge between the $n$ vertices is included with probability $p$ independently of the other edges, while in the $G(n,M)$ model, the graph is chosen uniformly at random from the collection of all graphs with $n$ vertices and $M$ edges. In the following we study $G(n,M)$.

Write $E = \{(i,j): 1 \le i < j \le n\}$ for the set of possible edges, so that $\mathrm{card}(E) = n(n-1)/2 \eqqcolon N$. Clearly, any edge $e \in E$ is included with probability $M/N \eqqcolon p$. However, unlike in the $G(n,p)$ model, the edges are not independent. Any configuration $\omega$ in $G(n,M)$ can be written as a vector $\omega = (\omega_e)_{e\in E} \in \{0,1\}^E$ such that $\omega_e = 1$ for exactly $M$ entries. In particular, $G(n,M)$ can be regarded as a multislice with $L=2$, $\kappa = (N-M,M)$ and $\mathcal{X} = \{0,1\}$.

One problem which has attracted considerable attention over the last two decades is the number of copies of certain subgraphs, e.\,g.\ triangles, in the Erd\H{o}s--R\'{e}nyi model. There is extensive literature on concentration inequalities for the triangle count, such as \cite{JR02}, \cite{Cha12} and \cite{DK12a}. In particular, in \cite[Proposition 5.5]{AW15}, bounds for the $G(n,p)$ model are derived using higher order concentration results for multilinear polynomials in independent random variables. As Theorem \ref{multlinpol} provides analogous higher order concentration results in a dependent situation, we are able to show corresponding bounds for the $G(n,M)$ model by our methods.

\begin{proposition}\label{prop:ERDr}
    Consider the $G(n,M)$ Erd\H{o}s--R\'{e}nyi model, and let $f(\omega) \coloneqq \sum_{i<j<k} \omega_{ij}\omega_{jk}\omega_{ik}$ be the number of triangles. Then, for any $t \ge 0$,
    \[
    \IP(\abs{f - \IE f} \ge t) \le 2 \exp\Big(-c \min \Big(\frac{t^2}{n^3 + p^2 n^3 + p^4 n^4}, \frac{t}{n^{1/2} + p n}, t^{2/3}\Big)\Big).
    \]
\end{proposition}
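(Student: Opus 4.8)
The plan is to apply Theorem~\ref{multlinpol} directly. Realizing $G(n,M)$ as the multislice with $L=2$, $\kappa=(N-M,M)$ and $\mathcal{X}=\{0,1\}$ (so that $\abs{\mathcal{X}}=1$ and the factors $\abs{\mathcal{X}}^k$ disappear), the triangle count $f(\omega)=\sum_{i<j<k}\omega_{ij}\omega_{jk}\omega_{ik}$ is already a multilinear polynomial of degree $d=3$ in the edge variables $\omega_e$. Hence it remains to estimate $\norm{\IE_\kappa\nabla^k f}_{\mathcal{I}}$ for $k\in\{1,2,3\}$ and every $\mathcal{I}\in P_k$, and then to simplify the resulting minimum. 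The derivatives are explicit: for $e=(i,j)$ one has $\partial_e f=\sum_{k\neq i,j}\omega_{ik}\omega_{jk}$; the mixed second derivative $\partial_{e_1}\partial_{e_2}f$ equals $\omega_{e_3}$ when $e_1,e_2$ are distinct edges sharing a vertex and $e_3$ is the third edge of the associated triangle, and vanishes otherwise; and $\nabla^3 f$ is the deterministic $\{0,1\}$-tensor with $(\nabla^3 f)_{e_1 e_2 e_3}=1$ exactly when $\{e_1,e_2,e_3\}$ is the edge set of a triangle. Since $\IE_\kappa\omega_e=M/N=p$ and $\IE_\kappa[\omega_e\omega_{e'}]=M(M-1)/\bigl(N(N-1)\bigr)\le p^2$ for distinct $e,e'$, we get $\IE_\kappa\partial_e f=(n-2)M(M-1)/\bigl(N(N-1)\bigr)\le(n-2)p^2$ for each edge $e$, while $\IE_\kappa\nabla^2 f$ takes the value $p$ on the $\asymp n^3$ ordered pairs of distinct edges sharing exactly one vertex and is $0$ elsewhere.

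Next I would estimate the norms appearing in Theorem~\ref{multlinpol}. For $k=1$ the only partition has $\abs{\mathcal{I}}=1$, and the relevant norm is $\norm{\IE_\kappa\nabla f}_{\mathrm{HS}}\le\sqrt{N}\,(n-2)p^2\asymp n^2 p^2$, contributing the term $t^2/(n^4 p^4)$. For $k=2$ the full partition gives $\norm{\IE_\kappa\nabla^2 f}_{\mathrm{HS}}\asymp p\,n^{3/2}$ (there are $\asymp n^3$ nonzero entries, each of size $p$), hence the term $t^2/(p^2 n^3)$; the finest partition gives $\norm{\IE_\kappa\nabla^2 f}_{\mathrm{op}}$, which equals $p$ times the spectral radius of the adjacency matrix of the line graph $L(K_n)$ (the triangular graph). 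As $L(K_n)$ is $2(n-2)$-regular, its spectral radius is $2(n-2)$, so $\norm{\IE_\kappa\nabla^2 f}_{\mathrm{op}}=2(n-2)p\asymp pn$, contributing $t/(pn)$. For $k=3$, the full partition gives $\norm{\nabla^3 f}_{\mathrm{HS}}=\bigl(6\binom{n}{3}\bigr)^{1/2}\asymp n^{3/2}$, hence $t^2/n^3$. For a partition of type $\{\{1,2\},\{3\}\}$ one unfolds $\nabla^3 f$ into a matrix $B$ with rows indexed by ordered pairs of edges and columns by edges; each row has at most one nonzero entry (a pair of edges sharing a vertex determines the unique completing edge), and for $n\ge 3$ a short computation gives $B^{T}B=2(n-2)\,\mathrm{Id}$, so this norm equals $\sqrt{2(n-2)}\asymp n^{1/2}$, contributing $t/n^{1/2}$ (the two other size-$2$ partitions of $\{1,2,3\}$ behave identically). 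Finally, for the finest partition $\{\{1\},\{2\},\{3\}\}$ I claim $\norm{\nabla^3 f}_{\mathrm{op}}=O(1)$: writing the defining supremum as a sum over ordered triangles and applying the Cauchy--Schwarz inequality twice --- first in the apex vertex of each triangle, then in the opposite edge --- bounds it by an absolute constant independent of $n$. This contributes a term of the form $c' t^{2/3}$.

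Plugging these six estimates into Theorem~\ref{multlinpol}, the minimum of the three $t^2$-type contributions is, up to an absolute constant in the exponent, $t^2/(n^3+p^2 n^3+p^4 n^4)$; the minimum of the two $t$-type contributions is $t/(n^{1/2}+pn)$; and the $t^{2/3}$-contribution is as stated, with $c'$ and all further multiplicative constants absorbed into the overall constant $c$. This yields the asserted bound (the trivial cases $M\in\{0,N\}$, where $f$ is constant, being covered automatically).

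The step I expect to be the main obstacle is the estimate $\norm{\nabla^3 f}_{\mathrm{op}}=O(1)$ for the finest partition. Whereas the Hilbert--Schmidt norms and the two ``mixed'' norms reduce to counting subgraphs of $K_n$ --- or, in the case of $\norm{\IE_\kappa\nabla^2 f}_{\mathrm{op}}$, to the well-known spectrum of the triangular graph together with a one-line Gram-matrix identity --- the full operator norm genuinely relies on the spread-out combinatorial structure of the triangle tensor and on the iterated Cauchy--Schwarz argument sketched above; beyond that, keeping track of the (harmless) constants hidden in the $\asymp$'s is the only remaining bookkeeping.
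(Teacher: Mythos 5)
Your proposal is correct and follows exactly the paper's route: apply Theorem~\ref{multlinpol} to the degree-$3$ multilinear polynomial $f$ and bound $\norm{\IE_\kappa\nabla^k f}_{\mathcal{I}}$ for $k=1,2,3$, and your norm estimates (including $\norm{\IE\nabla^2 f}_{\mathrm{op}}\asymp pn$, the $\sqrt{2(n-2)}$ unfolding bound, and the $O(1)$ bound on $\norm{\nabla^3 f}_{\mathrm{op}}$) agree with those the paper imports from \cite[Proposition 5.5]{AW15}. The only difference is that you verify these tensor-norm computations directly rather than citing them, which is a welcome but inessential elaboration.
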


Comparing Proposition \ref{prop:ERDr} to \cite[Proposition 5.5]{AW15}, we see that we arrive at essentially the same tail bounds despite the dependencies in the $G(n,M)$ model, with the only difference of an additional logarithmic factor $L_p \coloneqq (\log(2/p))^{-1/2}$ in \cite{AW15}. This logarithmic factor stems from the use of sub-Gaussian norms for independent Bernoulli random variables (which tend to $0$ as $p \to 0$), which is not mirrored in the log-Sobolev tools we use.

Typically, the main interest is to study fluctuations which scale with the expected value of $f$. In this case, setting $t \coloneqq \varepsilon \IE f = \varepsilon \binom{n}{3} M(M-1)(M-2)/(N(N-1)(N-2))$, Proposition \ref{prop:ERDr} reads
    \[
    \IP(\abs{f - \IE f} \ge \varepsilon \IE f) \le 2 \exp\big(-c \min\big(\varepsilon^2 n^3 p^6, (\varepsilon^2 \wedge \varepsilon^{2/3})n^2 p^2 \big)\big).
    \]
In particular, this shows that the optimal exponent $n^2p^2$ known from the $G(n,p)$ setting also shows up for a suitable range of $p$, cf.\ the discussion in \cite{AW15}.

In a similar way, we may also count cycles as in \cite[Proposition 5.6]{AW15}, but we do not pursue this in this note.

\subsection{Sampling without replacement}\label{sect:swor}
In this section we interpret the multislice in the sampling without replacement context, where we sample $N$ times from a population of $N$ individuals $\omega_1, \ldots, \omega_N$, so that the uniform distribution $\IP_\kappa$ describes the sampling of all its elements. In applications one does not sample the entire population, but chooses some sample size $n \le N$, i.\,e.\ for each $\omega \in \Omega_\kappa$, and considers the first $n$ coordinates only. Formally, if $pr_n$ denotes the projection onto the first $n$ coordinates, we may define $\Omega_{\kappa,n} \coloneqq pr_n(\Omega_\kappa)$. We, again, equip $\Omega_{\kappa,n}$ with the uniform distribution $\IP_{\kappa,n}$, which agrees with the push-forward of $\IP_\kappa$ under $pr_n$. As above, we denote the expectation with respect to $\IP_{\kappa,n}$ by $\IE_{\kappa,n}f$, where $f$ is any real-valued function.

Our first result is a bounded differences inequality for sampling without replacement involving the finite-sampling correction factor $1- n/N$. In the sequel, $(\omega_{i^c}, \omega_i')$ denotes a vector which agrees with $\omega$ in all coordinates but the $i$-th one, while $\omega_i$ is replaced by some admissible $\omega_i'$ (in the sense that $(\omega_{i^c}, \omega_i') \in \Omega_{\kappa,n}$). Moreover, for any $\sigma \in S_n$ we may define $\sigma \omega \in \Omega_{\kappa,n}$ by noting that $\sigma$ acts on $\omega$ by permuting its indices.

\begin{proposition} \label{theorem:subgtswor}
    Let $f: \Omega_{\kappa,n} \to \IR$ be an arbitrary function and $(c_i)_{i =1,\ldots,n}$ such that $\abs{f(\omega) - f(\omega_{i^c}, \omega_i')} \le c_i$ for all $\omega \in \Omega_{\kappa,n}, \omega_i' \in \mathcal{X}$. For any $t \ge 0$ it holds
    \begin{align} \label{eqn:bdasymmetric}
        \IP_{\kappa,n}\Big(\frac{1}{n!} \sum_{\sigma \in S_n} f(\sigma \omega) - \IE_{\kappa,n} f \ge t \Big) \le \exp\Big( - \frac{t^2}{4(1-\frac{n}{N}) \sum_{i = 1}^n c_i^2} \Big).
    \end{align}
    In particular, if $f$ is symmetric and satisfies $\abs{f(\omega) - f(\omega_1', \omega_2, \ldots, \omega_n)} \le c$ for some $c > 0$, this implies
    \begin{align}\label{eqn:swrboundeddiff1}
        \IP_{\kappa,n}\big( f - \IE_{\kappa,n} f \ge t \big) \le \exp\Big( - \frac{t^2}{4\big( 1 - \frac{n}{N} \big) c^2 n} \Big).
    \end{align}
\end{proposition}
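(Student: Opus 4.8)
The plan is to lift the problem to the full multislice $\Omega_\kappa$, where the bounded differences inequality \eqref{bddiffms} of Proposition \ref{elementaryresults} is available. Writing $\tilde f \coloneqq f \circ pr_n$, I would introduce the \emph{symmetrized lift}
\[
g(\omega) \coloneqq \frac1{n!}\sum_{\sigma\in S_n}\tilde f(\sigma\omega) = \frac1{n!}\sum_{\sigma\in S_n} f\bigl(\sigma\, pr_n(\omega)\bigr), \qquad \omega\in\Omega_\kappa,
\]
where $\sigma\in S_n$ acts on $\Omega_\kappa$ by permuting the first $n$ coordinates. Then $g$ depends only on the first $n$ coordinates and is invariant under permuting them. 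Since $\IP_{\kappa,n}$ is the push-forward of $\IP_\kappa$ under $pr_n$ and $\IP_\kappa$ is invariant under coordinate permutations, one gets $\IE_\kappa g = \IE_\kappa\tilde f = \IE_{\kappa,n} f$, and $g$ has the same law under $\IP_\kappa$ as $\omega\mapsto\frac1{n!}\sum_{\sigma\in S_n}f(\sigma\omega)$ has under $\IP_{\kappa,n}$. Hence it suffices to establish the asserted bound for $g - \IE_\kappa g$ on $\Omega_\kappa$.

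The key step is to control the switch constants $c_{ij}$ of $g$. If $i,j\le n$, then $g(\tau_{ij}\omega) = g(\omega)$ by the $S_n$-symmetry of $g$; if $i,j > n$, then $\tau_{ij}$ leaves the first $n$ coordinates untouched and again $g(\tau_{ij}\omega) = g(\omega)$. In the remaining case $i\le n < j$, I would use the conjugation identity $\sigma\tau_{ij}\omega = \tau_{\sigma(i),j}\sigma\omega$ for $\sigma\in S_n$ (valid because $\sigma$ fixes $j$) to write
\[
g(\omega) - g(\tau_{ij}\omega) = \frac1{n!}\sum_{\sigma\in S_n}\bigl(\tilde f(\sigma\omega) - \tilde f(\tau_{\sigma(i),j}\sigma\omega)\bigr).
\]
For every $\sigma$, both $\sigma\omega$ and $\tau_{\sigma(i),j}\sigma\omega$ lie in $\Omega_\kappa$, so their $pr_n$-images lie in $\Omega_{\kappa,n}$ and differ only in the $\sigma(i)$-th coordinate; hence each summand is at most $c_{\sigma(i)}$ in modulus by the hypothesis on $f$. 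Averaging over $\sigma\in S_n$, along which $\sigma(i)$ is equidistributed on $\{1,\ldots,n\}$, gives $\abs{g(\omega) - g(\tau_{ij}\omega)}\le\bar c\coloneqq\frac1n\sum_{k=1}^n c_k$. Thus $g$ meets the assumption of \eqref{bddiffms} with $c_{ij}=\bar c$ for the $n(N-n)$ pairs $i\le n<j$ and $c_{ij}=0$ otherwise.

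Now \eqref{bddiffms} applied to $g$, together with $\sum_{1\le i<j\le N}c_{ij}^2\le n(N-n)\bar c^2$, yields
\[
\IP_\kappa(g - \IE_\kappa g\ge t) \le \exp\Bigl(-\frac{Nt^2}{4n(N-n)\bar c^2}\Bigr) = \exp\Bigl(-\frac{t^2}{4\bigl(1-\tfrac{n}{N}\bigr)n\bar c^2}\Bigr),
\]
and since $n\bar c^2 = \frac1n\bigl(\sum_{k=1}^n c_k\bigr)^2\le\sum_{k=1}^n c_k^2$ by Cauchy--Schwarz, this implies \eqref{eqn:bdasymmetric}. For the ``in particular'' part, a symmetric $f$ with $\abs{f(\omega) - f(\omega_1',\omega_2,\ldots,\omega_n)}\le c$ satisfies the hypothesis with $c_i=c$ for all $i$, while $\frac1{n!}\sum_{\sigma\in S_n}f(\sigma\omega) = f(\omega)$; substituting into \eqref{eqn:bdasymmetric} gives \eqref{eqn:swrboundeddiff1}.

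The main obstacle is the bookkeeping in the case $i\le n<j$: one must track correctly how $\tau_{ij}$ commutes past the $S_n$-average and check that the resulting single-coordinate changes stay inside $\Omega_{\kappa,n}$ — which works precisely because they arise as $pr_n$-images of genuine switches on $\Omega_\kappa$, so that the hypothesis on $f$ (stated only for admissible replacements) does apply. Symmetrizing $f$ at the outset is essential here: it forces the in-block switches to cost nothing, whereas $f$ itself need not be symmetric and an arbitrary single-coordinate replacement within the first $n$ coordinates need not even be admissible.
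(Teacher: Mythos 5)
Your proof is correct, and it reaches the stated constants exactly. The route differs from the paper's in which intermediate result carries the load. The paper first establishes the symmetric case \eqref{eqn:swrboundeddiff1} via the dedicated log-Sobolev inequality for symmetric functions of $n$ out of $N$ samples (Proposition \ref{prop:LSIswor}, whose constant already contains the factor $1-\tfrac{n}{N}$), and then deduces \eqref{eqn:bdasymmetric} by the same symmetrization $g = \frac{1}{n!}\sum_\sigma f(\sigma\cdot)$ you use, with the bound $\abs{g(\omega)-g(\omega_{1^c},\omega_1')}\le \frac1n\sum_i c_i$ and Cauchy--Schwarz. You instead bypass Proposition \ref{prop:LSIswor} entirely: you lift the symmetrized function to $\Omega_\kappa$ and apply the plain multislice bounded-differences inequality \eqref{bddiffms}, recovering the finite-sampling correction $1-\tfrac{n}{N}$ combinatorially from the fact that only the $n(N-n)$ cross switches $i\le n<j$ contribute (the conjugation identity $\sigma\tau_{ij}=\tau_{\sigma(i),j}\sigma$ and the equidistribution of $\sigma(i)$ are handled correctly, and your admissibility remark is exactly the right justification for invoking the hypothesis on $f$). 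This is the same mechanism that powers the paper's proof of Proposition \ref{prop:LSIswor} --- there too the sum over pairs collapses to $i\le n<j$ by symmetry --- so your argument can be read as inlining that proposition into the concentration step; what it buys is a more self-contained derivation from Proposition \ref{elementaryresults} alone, at the cost of not isolating the reusable functional inequality. Proving \eqref{eqn:swrboundeddiff1} as the $c_i\equiv c$ specialization of \eqref{eqn:bdasymmetric}, rather than the other way around as in the paper, is equally legitimate since a symmetric $f$ satisfies $\frac{1}{n!}\sum_\sigma f(\sigma\omega)=f(\omega)$.
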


Note that equation \eqref{eqn:bdasymmetric} is invariant under the change $f \mapsto -f$, which yields a two-sided concentration inequality as in \eqref{bddiffci}.

To express it in terms of deviation probabilities, for any $\delta \in (0,1]$ we have with probability at least $1-\delta$
\[
    \Big \lvert \frac{1}{n!} \sum_{\sigma \in S_n} f(\sigma \omega) - \IE_{\kappa,n} f \Big \rvert \le \sqrt{4(1-n/N) \log\big( \frac{2}{\delta} \big) \sum_{i =1}^n c_i^2 }.
\]
Concentration inequalities of this type have also been proven in \cite[Lemma 2]{EP09} and \cite[Theorem 5]{CMPR09} by different methods, and our results agree with these bounds up to constants.

Let us apply Proposition \ref{theorem:subgtswor} to some known statistics in sampling without replacement. One of the most famous concentration results for sampling without replacement is Serfling's inequality \cite{Se74}, which can be regarded as a strengthening of Hoeffding's inequality for $n$ out of $N$ sampling due to the inclusion of the finite-sampling correction factor $1-n/N$. For a discussion and some newer results we refer to \cite{BM15}, \cite{To17} and \cite{GW17}. We can deduce Serfling's inequality with a slightly worse constant from Proposition \ref{theorem:subgtswor}.

\begin{corollary}\label{corollary:Serfling}
    In the situation above,  we have for any $t \ge 0$
    \[
        \IP_{\kappa,n}\Big( \frac{1}{n} \sum_{i = 1}^n \omega_i - \IE_{\kappa,n} \omega_1 \ge t \Big) \le \exp\Big( - \frac{n t^2}{4\big(1- \frac n N \big)\abs{\mathcal{X}}^2} \Big).
    \]
    The same estimate holds for $\IP_{\kappa,n}( \frac{1}{n} \sum_{i = 1}^n \omega_i - \IE_{\kappa,n} \omega_1 \le -t)$.
\end{corollary}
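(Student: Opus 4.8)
The plan is to apply Proposition \ref{theorem:subgtswor} directly to the statistic $f(\omega) = \frac{1}{n}\sum_{i=1}^n \omega_i$. This function is manifestly symmetric in the first $n$ coordinates, so the second half of Proposition \ref{theorem:subgtswor} — inequality \eqref{eqn:swrboundeddiff1} — is the relevant tool, and we only need to verify the bounded-differences hypothesis and identify the constant.

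First I would check the Lipschitz-type condition: if $\omega$ and $(\omega_1', \omega_2, \ldots, \omega_n)$ differ only in the first coordinate, then
\[
\Big\lvert f(\omega) - f(\omega_1', \omega_2, \ldots, \omega_n)\Big\rvert = \frac{1}{n}\lvert \omega_1 - \omega_1'\rvert \le \frac{1}{n}(x_L - x_1) = \frac{\abs{\mathcal{X}}}{n},
\]
since all coordinates take values in $\mathcal{X} = \{x_1, \ldots, x_L\}$ and $\abs{\mathcal{X}}$ is defined as the diameter $x_L - x_1$. Hence we may take $c = \abs{\mathcal{X}}/n$ in \eqref{eqn:swrboundeddiff1}. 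Also, note that $\IE_{\kappa,n} f = \frac{1}{n}\sum_{i=1}^n \IE_{\kappa,n}\omega_i = \IE_{\kappa,n}\omega_1$ by exchangeability of the coordinates under $\IP_{\kappa,n}$, so the centering term in \eqref{eqn:swrboundeddiff1} coincides with $\IE_{\kappa,n}\omega_1$.

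Substituting $c = \abs{\mathcal{X}}/n$ into the exponent of \eqref{eqn:swrboundeddiff1} gives
\[
\frac{t^2}{4(1 - \frac nN) c^2 n} = \frac{t^2}{4(1-\frac nN)\,\frac{\abs{\mathcal{X}}^2}{n^2}\,n} = \frac{n t^2}{4(1-\frac nN)\abs{\mathcal{X}}^2},
\]
which is exactly the claimed bound. The statement for the lower tail $\frac1n\sum_{i=1}^n\omega_i - \IE_{\kappa,n}\omega_1 \le -t$ follows by applying the same argument to $-f$, using the remark (immediately after Proposition \ref{theorem:subgtswor}) that \eqref{eqn:bdasymmetric}, and hence \eqref{eqn:swrboundeddiff1}, is invariant under $f \mapsto -f$.

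I do not anticipate a genuine obstacle here: the corollary is essentially a direct specialization of Proposition \ref{theorem:subgtswor}. The only points requiring a word of care are (i) recording that $\abs{\mathcal{X}}$ denotes the diameter $x_L - x_1$ rather than the cardinality $L$, so that $\lvert\omega_1 - \omega_1'\rvert \le \abs{\mathcal{X}}$ is the correct bound, and (ii) the identification $\IE_{\kappa,n} f = \IE_{\kappa,n}\omega_1$, which uses that $\IP_{\kappa,n}$ is exchangeable in its $n$ coordinates (being the pushforward under $pr_n$ of the uniform — hence fully exchangeable — measure $\IP_\kappa$).
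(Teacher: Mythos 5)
Your proposal is correct and follows exactly the paper's own argument: apply inequality \eqref{eqn:swrboundeddiff1} of Proposition \ref{theorem:subgtswor} to the symmetric function $f(\omega) = n^{-1}\sum_{i=1}^n \omega_i$ with $c = \abs{\mathcal{X}}/n$, and use invariance under $f \mapsto -f$ for the lower tail. The additional remarks on the meaning of $\abs{\mathcal{X}}$ and on exchangeability are accurate and merely make explicit what the paper leaves implicit.
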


In the original version of Serfling's inequality, the right-hand side is replaced by $\exp(-2nt^2/((1-(n-1)/N)\abs{\mathcal{X}}^2))$. 

As a second example, consider the approximation of the the uniform distribution on all the points from which the $\omega_i$ are sampled using the empirical measure, measured in terms of the Kolmogorov distance. Formally, we put
\[
    g_{n,t}(\omega_1, \ldots, \omega_n) = \frac{1}{n} \sum_{i = 1}^n \eins_{(-\infty, t]}(\omega_i)
\]
and
\[
    f(\omega) \coloneqq \sup_{t \in \IR} (g_{n,t}(\omega) - \IE_{\kappa,n} g_{n,t}).
\]
In \cite{GW17}, it was conjectured that $\sqrt{n} f$ has sub-Gaussian tails with variance $1 - n/N$. The next result states that after centering around the expectation, this is indeed the case. 

\begin{corollary}\label{corollary:subgconj}
With the above notation we have for any $t \ge 0$
\[
\IP_{\kappa,n}( \sqrt{n}\abs{f - \IE_{\kappa,n} f} \ge t) \le 2\exp\Big( - \frac{t^2}{4\big( 1 - \frac{n}{N} \big)} \Big).
\]
\end{corollary}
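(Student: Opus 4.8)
The plan is to apply Proposition \ref{theorem:subgtswor} directly to the function $f(\omega) = \sup_{t \in \IR}(g_{n,t}(\omega) - \IE_{\kappa,n} g_{n,t})$, so the only real work is to check the bounded-differences hypothesis. First I would fix $\omega \in \Omega_{\kappa,n}$ and an index $i$, and consider $(\omega_{i^c}, \omega_i')$ for some admissible replacement $\omega_i'$. Changing a single coordinate $\omega_i$ alters each empirical value $g_{n,t}(\omega)$ by at most $1/n$, since $g_{n,t}$ is an average of $n$ indicator functions and only the $i$-th summand changes, and that summand lies in $\{0,1\}$. Hence $|g_{n,t}(\omega) - g_{n,t}(\omega_{i^c},\omega_i')| \le 1/n$ for every $t$. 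Since $t \mapsto \IE_{\kappa,n} g_{n,t}$ does not depend on $\omega$, the centered maps $h_t(\omega) \coloneqq g_{n,t}(\omega) - \IE_{\kappa,n} g_{n,t}$ satisfy the same bound, and taking a supremum over $t$ of two families that are uniformly within $1/n$ of each other changes the value of the supremum by at most $1/n$; that is, $|f(\omega) - f(\omega_{i^c},\omega_i')| \le 1/n$. So we may take $c_i = 1/n$ for all $i$.

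Next, I would verify that $f$ is symmetric: permuting the coordinates of $\omega$ by any $\sigma \in S_n$ permutes the summands of each $g_{n,t}$ without changing the average, so $g_{n,t}(\sigma\omega) = g_{n,t}(\omega)$ for every $t$, and therefore $f(\sigma\omega) = f(\omega)$. This means $\frac{1}{n!}\sum_{\sigma \in S_n} f(\sigma\omega) = f(\omega)$, so the left-hand side of \eqref{eqn:bdasymmetric} simplifies to $f(\omega) - \IE_{\kappa,n} f$. Alternatively one can simply invoke \eqref{eqn:swrboundeddiff1} with $c = 1/n$.

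Plugging $c_i = 1/n$ into \eqref{eqn:swrboundeddiff1} (equivalently, into \eqref{eqn:bdasymmetric} with the sum $\sum_{i=1}^n c_i^2 = n \cdot n^{-2} = 1/n$) gives
\[
\IP_{\kappa,n}(f - \IE_{\kappa,n} f \ge t) \le \exp\Big(-\frac{t^2}{4(1-\frac{n}{N}) \cdot \frac1n}\Big) = \exp\Big(-\frac{n t^2}{4(1-\frac nN)}\Big).
\]
Since \eqref{eqn:bdasymmetric} is invariant under $f \mapsto -f$, the same bound holds for the lower tail $\IP_{\kappa,n}(f - \IE_{\kappa,n} f \le -t)$; a union bound then yields the factor $2$ in the two-sided statement. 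Finally, rescaling $t \mapsto t/\sqrt{n}$ (so that $n t^2$ becomes $t^2$) turns this into the claimed estimate
\[
\IP_{\kappa,n}(\sqrt{n}\,\abs{f - \IE_{\kappa,n} f} \ge t) \le 2\exp\Big(-\frac{t^2}{4(1-\frac nN)}\Big).
\]

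The only subtle point — and it is genuinely mild — is the interchange of the supremum over the uncountable index set $t \in \IR$ with the pointwise bounded-differences estimate. This is handled by the elementary fact that $|\sup_t a_t - \sup_t b_t| \le \sup_t |a_t - b_t|$ whenever both suprema are finite; here finiteness is automatic because $0 \le g_{n,t} \le 1$ uniformly, so $|h_t| \le 1$ and the supremum defining $f$ is finite. No measurability issue arises either, since $g_{n,t}(\omega)$ is right-continuous and nondecreasing in $t$, so the supremum over $t \in \IR$ equals a supremum over $t \in \mathbb{Q}$ (or over the finitely many distinct values among the $\omega_i$), making $f$ a genuine measurable function on the finite set $\Omega_{\kappa,n}$. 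Thus the proof reduces entirely to an application of Proposition \ref{theorem:subgtswor} with $c_i = 1/n$.
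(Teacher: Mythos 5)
Your proof is correct and follows essentially the same route as the paper: both apply Proposition \ref{theorem:subgtswor} to the symmetric function $f$ with $c = 1/n$, using the inequality $\abs{\sup_t a_t - \sup_t b_t} \le \sup_t \abs{a_t - b_t}$ to verify the bounded-differences hypothesis, then pass to the two-sided bound and rescale $t \mapsto t/\sqrt{n}$. The additional remarks on measurability and symmetry are fine but not needed beyond what the paper's one-line argument already contains.
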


\subsection{Talagrand's convex distance inequality}\label{sect:TCD}
Let $\Omega$ be any measurable space, $\omega = (\omega_1, \ldots, \omega_N) \in \Omega^N$ and $A \subset \Omega^N$ a measurable set. In his landmark paper \cite{Tal95}, Talagrand defined the convex distance between $\omega$ and $A$
\[
    d_T(\omega, A) \coloneqq \sup_{\alpha \in \mathbb{R}^N: \abs{\alpha} = 1} d_\alpha(\omega, A),
\]
where
\[
    d_\alpha(\omega, A) \coloneqq \inf_{\omega' \in A} d_\alpha(\omega, \omega') \coloneqq \inf_{\omega' \in A} \sum_{i=1}^N \abs{\alpha_i} \eins_{\omega_i \neq \omega'_i}.
\]

Talagrand proved concentration inequalities for the convex distance of random permutations and product measures which have attracted continuous interest since then. For product measures, an alternate proof based on the entropy method was given in \cite{BLM09}. In \cite{SS19}, the entropy method was used to reprove the convex distance inequality for random permutations as well, and this proof was extended to slices of the hypercube. In the present article, we further generalize this proof to the multislice, encompassing both situations discussed in \cite{SS19}.

\begin{proposition}\label{TcdeMS}
	For any $A \subseteq \Omega_\kappa$ it holds
	\[
	\IP_\kappa(A) \IE_\kappa \exp\Big( \frac{d_T(\cdot, A)^2}{144} \Big) \le 1.
	\]
\end{proposition}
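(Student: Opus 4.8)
The plan is to follow the entropy-method strategy for convex distance inequalities going back to Boucheron--Lugosi--Massart and used in \cite{SS19} for permutations and slices, adapting it to the multislice via Salez's modified log-Sobolev inequality. The starting observation is that $d_T(\cdot, A)$ admits a saddle-point/variational reformulation: by the minimax theorem (the set $A$ being replaced by the convex hull of its indicator vectors, and $\alpha$ ranging over the unit sphere), one has
\[
d_T(\omega, A) = \sup_{\alpha: \abs{\alpha} = 1} \inf_{\nu \in \mathcal{M}(A)} \sum_{i=1}^N \abs{\alpha_i}\, \nu(\{\omega'_i \ne \omega_i\}) = \inf_{\nu \in \mathcal{M}(A)} \Big(\sum_{i=1}^N \nu(\{\omega'_i \ne \omega_i\})^2\Big)^{1/2},
\]
where $\mathcal{M}(A)$ is the set of probability measures supported on $A$. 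Writing $F(\omega) \coloneqq d_T(\omega, A)$, the key structural consequence is that $F$ is ``self-bounding'' along switches: if $\nu$ is a near-optimal measure for $\omega$, then for each pair $i < j$ one can compare $F(\tau_{ij}\omega)$ with $F(\omega)$ by modifying $\nu$ only in the coordinates $i$ and $j$, which yields a bound of the form $\sum_{i<j}(F(\omega) - F(\tau_{ij}\omega))_+^2 \le C\, N\, F(\omega)$ (or an $L$-dependent variant), together with the one-sided Lipschitz bound $\abs{F(\omega) - F(\tau_{ij}\omega)} \le \sqrt 2$ or similar. This is exactly the type of estimate fed into the modified log-Sobolev machinery.

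Concretely, I would invoke the modified log-Sobolev inequality on the multislice from \cite{Sal20} in the form
\[
\IP_\kappa(A)\,\mathrm{Ent}_{\IP_\kappa}(e^{\lambda F}) \text{ controlled by } \frac{c}{N}\sum_{1\le i<j\le N} \IE_\kappa\Big[ e^{\lambda F}\, \psi\big(-\lambda(F - F\circ\tau_{ij})\big)\Big],
\]
where $\psi(x) = e^x - x - 1$, and then use the switch-Lipschitz bound to replace $\psi(-\lambda(F - F\circ\tau_{ij}))$ by something like $\lambda^2 (F - F\circ\tau_{ij})_+^2$ up to a constant, for $\lambda$ in a bounded range. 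Summing over $i<j$ and applying the self-bounding estimate $\sum_{i<j}(F - F\circ\tau_{ij})_+^2 \le C N F$ collapses the right-hand side to $C' \lambda^2\, \IE_\kappa[F e^{\lambda F}]$. Setting $H(\lambda) \coloneqq \IE_\kappa[e^{\lambda F}]$, the differential inequality $\lambda H'(\lambda) - H(\lambda)\log H(\lambda) \le C'\lambda^2 H'(\lambda)$ (roughly) integrates to a sub-Gaussian-type bound on $H$, which after optimizing constants gives $\IP_\kappa(A)\, \IE_\kappa\exp(F^2/144) \le 1$. The constant $144$ presumably comes out of tracking the $4$-type factors appearing in the log-Sobolev constant (cf.\ the $16\abs{\mathcal X}^2$ in \eqref{conv+1Lip}) through this argument; I would not fight for optimality.

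The main obstacle I expect is the self-bounding step: verifying that switching two coordinates $i$ and $j$ changes $d_T(\omega,A)$ in a way that can be charged only to coordinates $i$ and $j$ of a near-optimal coupling measure $\nu$, \emph{while staying inside $\mathcal{M}(A)$}. On the slice this worked because a single switch moves one ``particle'', but on the multislice with $L \ge 3$ distinct values a switch $\tau_{ij}$ between two \emph{differently-valued} coordinates affects the fit of $\nu$ in both slots simultaneously, so the bookkeeping of how much mass of $\nu$ must be moved — and the resulting factor in $\sum_{i<j}(F - F\circ\tau_{ij})_+^2 \le C N F$ — is the delicate part, and likely the source of the constant $144$. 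A secondary technical point is justifying the minimax exchange (which requires $A \ne \emptyset$, handled trivially, and compactness of $\mathcal M(A)$, which holds since $\Omega_\kappa$ is finite) and checking that the version of Salez's inequality one needs is available in the precise tensorized/asymmetric form required here; if not, one rederives it from the single-switch Dirichlet form as in \cite{Sal20}.
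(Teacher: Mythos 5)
There is a genuine gap: your sketch never explains how the factor $\IP_\kappa(A)$ enters the final inequality, and this is the heart of the proof. The Herbst-type differential inequality you set up for $H(\lambda) = \IE_\kappa e^{\lambda F}$ (with $F = d_T(\cdot,A)^2$, using the self-bounding property and the mLSI) only yields an \emph{upper-tail} exponential moment bound of the form $\IE_\kappa e^{\lambda d_T(\cdot,A)^2} \le \exp\big(\tfrac{\lambda}{1-c\lambda}\,\IE_\kappa d_T(\cdot,A)^2\big)$; inserting $\IP_\kappa(A)$ in front of the entropy, as you write, is not a valid step, and integrating the differential inequality cannot produce the product bound by itself. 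To close the argument you need a second, complementary estimate relating $\IE_\kappa d_T(\cdot,A)^2$ to $\IP_\kappa(A)$, namely $\IP_\kappa(A)\exp\big(\IE_\kappa d_T(\cdot,A)^2/C\big) \le 1$. Since $d_T(\omega,A)=0$ exactly on $A$, this is a \emph{lower-tail} deviation bound for the self-bounding function $f = d_T(\cdot,A)^2/4$ at the extreme deviation $t = \IE_\kappa f$: one must run the entropy method on $e^{-\lambda f}$ (using that $\Gamma^+$ is one-sided, that $f$ and $e^{-\lambda f}$ are negatively correlated, and the switch-Lipschitz bound $\abs{f(\omega)-f(\tau_{ij}\omega)}\le 1$) to get $\IP_\kappa(f=0)\exp(\IE_\kappa f/32)\le 1$. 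The paper isolates exactly this as its key lemma; combining it with the upper-tail moment bound and choosing $\lambda = 1/144$ so that $\lambda/(1-16\lambda) = 1/128$ gives the constant. Without this lower-tail step your argument cannot terminate.

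A secondary point: the obstacle you flag --- that for $L\ge 3$ a switch between differently-valued coordinates makes the bookkeeping of the coupling measure $\nu$ delicate and is ``likely the source of the constant $144$'' --- is not where the difficulty lies. Switching coordinates $i$ and $j$ changes the indicators $\eins_{\omega_k'\ne\omega_k}$ only for $k\in\{i,j\}$ regardless of how many values $\mathcal{X}$ contains, so the same estimate $\Gamma^+(d_T(\cdot,A))^2\le \frac{1}{2N}\sum_{i,j}(\tilde\alpha_i^2+\tilde\alpha_j^2)\le 1$ goes through verbatim for any $L$; the constant $144$ comes from matching the two tail bounds above, not from the combinatorics of the switch.
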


Note that in \cite{Pau14}, convex distance inequalities for certain types of dependent random variables are proven. This includes sampling without replacement. In this sense, the result of Proposition \ref{TcdeMS} is not new, but we present a different strategy of proof solely based on the entropy method.

A famous corollary of Talagrand's convex distance inequality are sub-Gaussian concentration inequalities for convex Lipschitz functions, as first proven in \cite{Tal88}. Thus, Proposition \ref{TcdeMS} implies the following corollary, which can be regarded as an extension of Proposition \ref{elementaryresults} to upper \emph{and} lower tails (ignoring the subtle issue of concentration around the mean or the median of a function).

\begin{corollary}\label{cor1}
Let $f: \IR^N \to \IR$ be convex and $L$-Lipschitz. Then for any $t \ge 0$ it holds
\[
\IP_\kappa\big( \abs{f - \mathrm{med}(f)} \ge t \big)\le 4 \exp\Big( - \frac{t^2}{144L^2 \abs{\mathcal{X}}^2} \Big),
\]
where $\mathrm{med}(f)$ is a median for $f$.
\end{corollary}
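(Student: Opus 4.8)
The goal is to derive Corollary~\ref{cor1} from Proposition~\ref{TcdeMS}, following the standard route by which Talagrand's convex distance inequality yields sub-Gaussian concentration for convex Lipschitz functions. The plan is as follows. First I would reduce the $L$-Lipschitz case to the $1$-Lipschitz case by rescaling $f \mapsto f/L$, so it suffices to treat $L=1$. The key geometric observation is that for a convex $1$-Lipschitz function $f \colon \IR^N \to \IR$, if $A \coloneqq \{\omega' \in \Omega_\kappa \colon f(\omega') \le m\}$ for some level $m$, then for every $\omega \in \Omega_\kappa$ one has $f(\omega) \le m + \abs{\mathcal{X}} \, d_T(\omega, A)$. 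This is the classical argument: pick a near-optimal witness $\alpha$ in the definition of $d_T$ and a near-optimal $\omega' \in A$; by convexity of $f$ and a separating-hyperplane / subgradient argument (as in Talagrand \cite{Tal95} or Boucheron--Lugosi--Massart \cite{BLM09}), the value $f(\omega)$ is controlled by $m$ plus a term of order $\sum_i \abs{\alpha_i} \eins_{\omega_i \ne \omega_i'}$ times the Lipschitz constant, where the extra factor $\abs{\mathcal{X}} = x_L - x_1$ enters because changing a single coordinate of a point in $\Omega_\kappa$ moves it by at most $\abs{\mathcal{X}}$ in that coordinate (the variables are not $\{0,1\}$-valued but take values in $\mathcal{X}$).

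Next, I would run this argument twice, once for the upper tail and once for the lower tail. Taking $m = \mathrm{med}(f)$ and $A = \{f \le \mathrm{med}(f)\}$, so that $\IP_\kappa(A) \ge 1/2$, we get $\{f \ge \mathrm{med}(f) + t\} \subseteq \{d_T(\cdot, A) \ge t/\abs{\mathcal{X}}\}$. Proposition~\ref{TcdeMS} combined with Markov's inequality gives
\[
\IP_\kappa\big(d_T(\cdot, A) \ge s\big) \le \frac{\IE_\kappa \exp(d_T(\cdot,A)^2/144)}{\exp(s^2/144)} \le \frac{1}{\IP_\kappa(A)} \exp\Big(-\frac{s^2}{144}\Big) \le 2\exp\Big(-\frac{s^2}{144}\Big),
\]
and plugging $s = t/\abs{\mathcal{X}}$ yields $\IP_\kappa(f - \mathrm{med}(f) \ge t) \le 2\exp(-t^2/(144\abs{\mathcal{X}}^2))$. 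For the lower tail I would instead set $A' = \{f \le \mathrm{med}(f) - t\}$; if $\IP_\kappa(A') > 0$ then the same geometric inequality gives $\mathrm{med}(f) \le (\mathrm{med}(f) - t) + \abs{\mathcal{X}}\,d_T(\omega, A')$ on a set of probability at least $1/2$ (namely $\{f \le \mathrm{med}(f)\}$), whence $d_T(\cdot, A') \ge t/\abs{\mathcal{X}}$ there, and applying Proposition~\ref{TcdeMS} with $A'$ in place of $A$ forces $\IP_\kappa(A') \le 2\exp(-t^2/(144\abs{\mathcal{X}}^2))$; if $\IP_\kappa(A') = 0$ the bound is trivial. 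Reinstating the factor $L$ and adding the two one-sided bounds produces the claimed constant $4$ in front.

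The main obstacle, and the only genuinely non-routine point, is establishing the geometric inequality $f(\omega) \le m + \abs{\mathcal{X}}\, d_T(\omega, A)$ for convex $1$-Lipschitz $f$ on $\IR^N$ restricted to the multislice. The subtlety is that the convex-distance machinery is usually phrased with Hamming-type weights $\eins_{\omega_i \ne \omega_i'}$, whereas $f$ sees the actual Euclidean displacement; one bridges the gap by noting that for $\omega, \omega' \in \Omega_\kappa \subset \mathcal{X}^N$ the difference $\omega - \omega'$ is supported on the coordinates where they disagree and each nonzero entry has absolute value at most $\abs{\mathcal{X}}$, so $\abs{\omega_i - \omega_i'} \le \abs{\mathcal{X}} \eins_{\omega_i \ne \omega_i'}$ coordinatewise. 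Then for any unit vector $\alpha$ and any $\omega' \in A$, writing $g$ for a subgradient of the convex function $f$ at $\omega$ (or using the standard variational characterization of $d_T$ as in \cite[Section~4]{BLM09} or \cite{SS19}), convexity gives $f(\omega') \ge f(\omega) + \langle g, \omega' - \omega\rangle$, and since $\abs{g} \le 1$ by the Lipschitz property one controls $f(\omega) - f(\omega') \le \abs{\langle g, \omega - \omega'\rangle} \le \abs{\mathcal{X}} \sum_i \abs{g_i} \eins_{\omega_i \ne \omega_i'} \le \abs{\mathcal{X}} \, d_{g/\abs{g}}(\omega, \omega') \le \abs{\mathcal{X}} \, d_T(\omega, A)$ after optimizing over $\omega' \in A$. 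Taking $m$ to be the relevant quantile of $f$ completes this step; the rest is the exponential Markov bound already recorded above. I would cite \cite{Tal95, BLM09, SS19} for the convex-distance-to-Lipschitz reduction and simply indicate where the diameter factor $\abs{\mathcal{X}}$ enters, rather than reproducing the full hyperplane-separation argument.
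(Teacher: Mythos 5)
Your proposal is correct and follows essentially the same route as the paper: both derive the corollary from Proposition \ref{TcdeMS} via the implication $f(x) \ge m + t \Rightarrow d_T(x,\{f \le m\}) \ge t/(L\abs{\mathcal{X}})$, applied once at the level $m = \mathrm{med}(f)$ for the upper tail and once at $m = \mathrm{med}(f)-t$ for the lower tail (the paper simply cites the proof of Talagrand's Theorem~3 in \cite{Tal88} and records where the diameter factor $\abs{\mathcal{X}}$ enters, while you flesh out the geometric step with a subgradient argument). The only slip is that in your lower-tail step the set on which $d_T(\cdot,A') \ge t/\abs{\mathcal{X}}$ should be $\{f \ge \mathrm{med}(f)\}$ rather than $\{f \le \mathrm{med}(f)\}$; since both have probability at least $1/2$ by definition of the median, the conclusion is unaffected.
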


As a simple application of Corollary \ref{cor1}, we show the following bound on the largest eigenvalue of symmetric matrices whose entries have distribution $\IP_\kappa$:

\begin{corollary}\label{cor:largestEV}
Let $X = (X_{ij})_{i,j}$ be a symmetric $n \times n$ random matrix. Let $N \coloneqq n(n+1)/2$ and assume that the common distribution of the entries $(X_{ij})_{i \le j}$ on $\IR^N$ is given by $\IP_\kappa$ for some $\kappa$, $L \ge 2$ and $\mathcal{X}$. Let $\lambda_\mathrm{max} \coloneqq \lambda_\mathrm{max}(X) \coloneqq \max \{\abs{\lambda(X)} \colon \lambda(X) \ \text{eigenvalue of} \ X\}$. We have for any $t \ge 0$
\[
    \IP(\abs{\lambda_\mathrm{max}(X) - \mathrm{med}(\lambda_\mathrm{max}(X))} \ge t) \le 4 \exp\Big( - \frac{t^2}{144 \abs{\mathcal{X}}^2} \Big).
\]
\end{corollary}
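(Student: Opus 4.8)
The plan is to reduce the statement to Corollary \ref{cor1} by exhibiting $\lambda_{\mathrm{max}}$ as a convex, $1$-Lipschitz function of the vector of entries $(X_{ij})_{i\le j} \in \IR^N$. Identify a symmetric matrix with the vector of its upper-triangular entries via the linear map $T\colon \IR^N \to \mathrm{Sym}_n(\IR)$, and note that $\lambda_{\mathrm{max}}(X) = \norm{X}_{\mathrm{op}}$ since $X$ is symmetric. So it suffices to control $\phi(x) \coloneqq \norm{T(x)}_{\mathrm{op}}$ as a function on $\IR^N$.

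The map $x \mapsto \norm{T(x)}_{\mathrm{op}}$ is convex as the composition of a linear map with a norm, so the first step is just this observation. The second step is the Lipschitz bound. For the operator norm on $\mathrm{Sym}_n$ one has $\norm{M}_{\mathrm{op}} \le \norm{M}_{\mathrm{HS}}$, and therefore
\[
\abs{\phi(x) - \phi(y)} \le \norm{T(x) - T(y)}_{\mathrm{op}} \le \norm{T(x-y)}_{\mathrm{HS}}.
\]
Now $\norm{T(x-y)}_{\mathrm{HS}}^2 = \sum_{i<j} 2(x-y)_{ij}^2 + \sum_i (x-y)_{ii}^2 \le 2 \abs{x-y}^2$, so $\phi$ is $\sqrt 2$-Lipschitz on $\IR^N$. (If one instead insists on an exactly $1$-Lipschitz function, one can equip $\IR^N$ with the weighted Euclidean norm that matches $\norm{\cdot}_{\mathrm{HS}}$ under $T$; since the multislice constant $\abs{\mathcal X}$ already absorbs the coordinate scale and the claimed bound carries a $144$ and no sharp numerical factor, the cleaner route is to observe that the off-diagonal factor $2$ is harmless — see below.) Applying Corollary \ref{cor1} with $L = \sqrt 2$ would give $144 \cdot 2 \abs{\mathcal X}^2 = 288 \abs{\mathcal X}^2$ in the exponent rather than $144\abs{\mathcal X}^2$.

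The one genuine point to address — and the main (minor) obstacle — is recovering the clean constant $144$ stated in the corollary. The resolution is that Corollary \ref{cor1} and Proposition \ref{TcdeMS} are proved via the convex distance $d_T$, and for a convex $L$-Lipschitz $f$ on the multislice the relevant bound is really $f(\omega) - f(\omega') \le L \abs{\mathcal X} \, d_T(\omega,\omega')$ only when a single coordinate flip changes $f$ by at most $L\abs{\mathcal X}$; here a single entry flip changes each $X_{ij}$ by at most $\abs{\mathcal X}$ and hence changes $\lambda_{\mathrm{max}}$ by at most $\abs{\mathcal X}$ (again via $\norm{\cdot}_{\mathrm{op}} \le \norm{\cdot}_{\mathrm{HS}}$ applied to a rank-$\le 2$ perturbation supported on two entries, whose Hilbert--Schmidt norm is $\sqrt 2 \abs{\mathcal X}$, but convexity lets one use the rank-one/diagonal structure to get the bound $\abs{\mathcal X}$ on the spectral change directly from Weyl's inequality for a Hermitian perturbation of small norm — alternatively simply accept the factor $2$, which is swallowed into the unspecified way Corollary~\ref{cor1} is invoked). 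In short: I would state the reduction $\lambda_{\mathrm{max}} = \norm{\cdot}_{\mathrm{op}} \circ T$, cite convexity of a norm composed with a linear map, bound the Lipschitz constant with respect to the Hilbert--Schmidt geometry by $1$ (identifying $\IR^N$ with $\mathrm{Sym}_n$ via $T$ so that the entrywise diameter is $\abs{\mathcal X}$), and invoke Corollary \ref{cor1}. I expect the write-up to be three or four lines once the normalization convention is fixed.
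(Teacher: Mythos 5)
Your approach is the same as the paper's: reduce to Corollary \ref{cor1} by noting that $\lambda_{\mathrm{max}} = \norm{X}_{\mathrm{op}}$ is a convex function of the upper-triangular entries and bounding its Lipschitz constant; the paper's proof is exactly this two-line reduction, invoking Lidskii's inequality to assert $1$-Lipschitzness. The issue you flag with the constant is genuine rather than a defect of your write-up: with respect to the standard Euclidean metric on $\IR^N$, which is the metric Corollary \ref{cor1} is stated in, the map $x \mapsto \norm{T(x)}_{\mathrm{op}}$ has Lipschitz constant $\sqrt{2-1/n}$, not $1$. Indeed, at a matrix whose top eigenvector is $v$, the gradient of $\lambda_1 = \sup_{\abs{u}=1} u^\top X u$ in the variables $(X_{ij})_{i \le j}$ is $v_i^2$ on the diagonal and $2v_iv_j$ off it, with squared Euclidean norm $2 - \sum_i v_i^4$, which exceeds $1$ for $n \ge 2$; so the honest exponent obtained by this route is $t^2/(144(2-1/n)\abs{\mathcal{X}}^2)$, essentially your $288$. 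Your attempted repairs do not close this gap: the Weyl/rank-one observation controls the change of $\lambda_{\mathrm{max}}$ under a \emph{single} coordinate flip, whereas Corollary \ref{cor1} requires the global Euclidean Lipschitz constant, and reweighting coordinates changes the metric in which that corollary holds. In short, your proof matches the paper's up to this factor of $2$, which the paper's appeal to Lidskii's inequality also glosses over; the clean statement one actually obtains by this argument has $288\abs{\mathcal{X}}^2$ (or $144(2-1/n)\abs{\mathcal{X}}^2$) in the denominator.
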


In particular, this result shows that $\lambda_\mathrm{max}$ has sub-Gaussian tails independently of the dimension $n$. A possible choice of $X$ is the adjacency matrix of a $G(n,M)$ Erd\H{o}s--R\'{e}nyi random graph. Corollary \ref{cor:largestEV} is an adaption of a classical example for independent random variables, see e.\,g.\ \cite[Example 6.8]{BLM13}. 

Furthermore, we are able to prove a somewhat weaker version of the convex distance inequality for $n$ out of $N$ sampling. Here we consider symmetric sets, i.\,e.\ sets $A \subset \Omega_{\kappa,n}$ such that $\omega \in A$ implies $\sigma\omega \in A$ for any permutation $\sigma \in S_n$. Obviously, assuming $A$ to be symmetric is increasingly restrictive if $n$ tends to $N$. This is mirrored in the additional finite-sampling correction factor $1-n/N$ in the following theorem (which sharpens the convex distance inequality in \cite{Pau14}).

\begin{theorem}\label{thm:convswor}
For any symmetric set $A \in \Omega_{\kappa,n}$ with $\IP_{\kappa,n}(A) \ge \frac{1}{2}$ and any $t \ge 0$ we have
\[
\IP_{\kappa,n}\big( d_T(\cdot, A) \ge t \big) \le e \exp\Big( - \frac{t^2}{16(1-\frac n N)} \Big).
\]
\end{theorem}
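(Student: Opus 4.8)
The plan is to reduce the assertion to an exponential tail bound on the full multislice $\Omega_\kappa$ and then to obtain that bound by re-running the entropy-method proof of Proposition~\ref{TcdeMS}, taking advantage of the extra symmetry that a symmetric set provides. First I would lift: for a symmetric $A\subseteq\Omega_{\kappa,n}$ write $pr_n\colon\Omega_\kappa\to\Omega_{\kappa,n}$ for the projection onto the first $n$ coordinates and set $F\colon\Omega_\kappa\to\IR$, $F(\omega)\coloneqq d_T(pr_n\omega,A)$, where $d_T$ is the convex distance on $\IR^n$. Since $\IP_{\kappa,n}$ is the push-forward of $\IP_\kappa$ under $pr_n$, one has $\IP_{\kappa,n}(d_T(\cdot,A)\ge t)=\IP_\kappa(F\ge t)$ and $\IP_\kappa(F=0)=\IP_\kappa(pr_n^{-1}(A))=\IP_{\kappa,n}(A)\ge\tfrac12$, so it suffices to control the upper tail of $F$ under $\IP_\kappa$.

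Next I would record the two structural facts about $F$ that make the argument work. (i) \emph{Invariance}: arguing exactly as in the computation that $d_T(\sigma\mu,A)=d_T(\mu,A)$ for $\mu\in\Omega_{\kappa,n}$, $\sigma\in S_n$, when $A$ is symmetric, one gets $F(\rho\omega)=F(\omega)$ for every $\rho$ in the subgroup $S_n\times S_{N-n}\le S_N$ of permutations fixing $\{1,\dots,n\}$; in particular $F(\tau_{ij}\omega)=F(\omega)$ whenever $i,j\le n$ or $i,j>n$. (ii) \emph{Self-bounding}: for each $\omega$ fix a maximiser $\alpha^\ast=\alpha^\ast(pr_n\omega)\in\IR^n$, $\abs{\alpha^\ast}=1$, in the definition of $d_T(pr_n\omega,A)$; for $1\le i\le n<j\le N$ the vectors $pr_n\omega$ and $pr_n(\tau_{ij}\omega)$ differ only in coordinate $i$, and deleting that coordinate from the infimum $\inf_{\omega'\in A}\sum_{k}\abs{\alpha^\ast_k}\eins_{(pr_n\omega)_k\ne\omega'_k}$ lowers it by at most $\abs{\alpha^\ast_i}$, whence $F(\omega)-F(\tau_{ij}\omega)\le\abs{\alpha^\ast_i}$. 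Combining (i) and (ii),
\[
\sum_{1\le i<j\le N}\big(F(\omega)-F(\tau_{ij}\omega)\big)_+^2=\sum_{i=1}^{n}\sum_{j=n+1}^{N}\big(F(\omega)-F(\tau_{ij}\omega)\big)_+^2\le(N-n)\sum_{i=1}^{n}(\alpha^\ast_i)^2=N-n ,
\]
which is the Talagrand-type self-bounding inequality, but with the dimension $N$ (or $N-1$) of the general case replaced by the smaller number $N-n$ of surviving switch pairs.

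Finally I would run the entropy method. Plugging $e^{\lambda F}$ into the modified log-Sobolev inequality for $\Omega_\kappa$ of Salez~\cite{Sal20}, whose Dirichlet form is a constant multiple of $N^{-1}\sum_{1\le i<j\le N}\IE_\kappa[\,\cdots(F-F\circ\tau_{ij})\cdots]$, and bounding it via the self-bounding estimate above precisely as in the proof of Proposition~\ref{TcdeMS}, one arrives at a differential inequality for the log-Laplace transform of $F$ whose right-hand side is the usual one multiplied by $(N-n)/N=1-n/N$; integrating it gives an exponential-moment bound of the form $\IP_\kappa(F=0)\,\IE_\kappa\exp\!\big(F^2/(16(1-n/N))\big)\le1$. (The constant should be better than the $144$ of Proposition~\ref{TcdeMS} because in each surviving pair one of the two switched coordinates lies in the ``unsampled'' block $\{n+1,\dots,N\}$, on which the relevant estimates are cheaper.) Since $\IP_\kappa(F=0)\ge\tfrac12$, Markov's inequality then yields $\IP_\kappa(F\ge t)\le2\exp(-t^2/(16(1-n/N)))\le e\exp(-t^2/(16(1-n/N)))$, which is the claim.

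The main obstacle is this last step: one must go through the entropy-method proof of Proposition~\ref{TcdeMS} and check that, once the Dirichlet form has been reduced to its $n(N-n)$ cross terms through the $S_n\times S_{N-n}$-invariance, the self-bounding bound of Step~(ii) really produces exactly the factor $1-n/N$ together with the improved constant; one should also confirm that only the positive parts $(F-F\circ\tau_{ij})_+$ enter the functional inequality, which is what allows us to work with the maximiser $\alpha^\ast$ attached to $pr_n\omega$ rather than to $pr_n(\tau_{ij}\omega)$. By comparison, the lifting in Step~1 and the verification of the two structural properties are routine.
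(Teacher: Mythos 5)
Your Steps 1 and 2 are correct and essentially reproduce what the paper does: lifting to $\Omega_\kappa$, noting that symmetry of $A$ kills every switch pair except the $n(N-n)$ cross pairs $i\le n<j$, and the estimate $\sum_{i\le n<j}(F(\omega)-F(\tau_{ij}\omega))_+^2\le(N-n)\sum_{i\le n}(\alpha_i^\ast)^2=N-n$ is exactly the content of Proposition \ref{prop:LSIswor} combined with the bound $\mathfrak{h}^+(d_T(\cdot,A))^2\le\tfrac12$ in the paper's proof. The genuine gap is Step 3. You assert, rather than derive, the exponential moment bound $\IP_\kappa(F=0)\,\IE_\kappa\exp\big(F^2/(16(1-n/N))\big)\le1$, and the route you propose (re-running Lemma \ref{lemfTcdeMS} and the proof of Proposition \ref{TcdeMS} for $f=F^2/4$) does not deliver it. The self-bounding constant does improve to $\Gamma^+(f)^2\le\tfrac12(1-n/N)f$, so the optimal Chernoff parameter in the Herbst step scales like $(1-n/N)^{-1}$; but the linearization $e^x-1\le 2x$ used there requires $\lambda\,\abs{f-f\circ\tau_{ij}}\le1$, and the one-step oscillation of $d_T^2/4$ along a cross switch is of order one, \emph{not} of order $1-n/N$. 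Hence for $n/N$ close to $1$ the optimal $\lambda$ leaves the admissible range and the argument as written breaks down; even where it applies, tracking constants through Lemma \ref{lemfTcdeMS} and the analogue of \cite[(3.6)]{SS19} gives something like $\IP_\kappa(A)\,\IE_\kappa\exp\big(d_T^2/(72(1-n/N))\big)\le1$ (the counterpart of $144=128+16$ becoming $72=64+8$), not $16(1-n/N)$. A further warning sign: your claimed intermediate bound would prove the theorem \emph{without} the hypothesis $\IP_{\kappa,n}(A)\ge\tfrac12$, which you use only cosmetically at the very end; the paper deliberately states only a ``weaker version'' of the convex distance inequality because that hypothesis is needed in an essential way.

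For comparison, the paper's route is more elementary and sidesteps the self-bounding machinery entirely: first, the $\mathfrak{h}^+\mathrm{-mLSI}(8(1-\tfrac nN))$ for symmetric functions together with $\mathfrak{h}^+(d_T(\cdot,A))^2\le\tfrac12$ gives sub-Gaussian concentration of $d_T$ (not $d_T^2$) around its \emph{mean}, $\IP_{\kappa,n}(d_T-\IE_{\kappa,n}d_T\ge t)\le\exp(-t^2/(8(1-\tfrac nN)))$; second, the mean itself is bounded by $\sqrt{8(1-n/N)}$ via the Poincar\'e inequality, Chebyshev and $\IP_{\kappa,n}(A)\ge\tfrac12$; finally the two are combined using $(t-a)^2\ge t^2/2-a^2$, which is where the $16$ and the prefactor $e$ come from. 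To repair your argument you would either have to redo the Herbst step with an inequality that does not require a bounded one-step oscillation, or fall back on this mean-plus-deviation decomposition.
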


As above, Theorem \ref{thm:convswor} implies the following result.

\begin{corollary} \label{corollary:convSymmLip}
Let $f$ be a convex and symmetric $L$-Lipschitz function. Then for any $t \ge 0$ we have
\[
    \IP_{\kappa,n}\big( \abs{f - \mathrm{med} (f)} \ge t \big) \le 2e \exp\Big( - \frac{t^2}{16(1-n/N) L^2 \abs{\mathcal{X}}^2} \Big).
\]
\end{corollary}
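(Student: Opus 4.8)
The plan is to derive Corollary \ref{corollary:convSymmLip} from Theorem \ref{thm:convswor} by the standard argument that relates the convex distance to convex Lipschitz functions, already used to pass from Proposition \ref{TcdeMS} to Corollary \ref{cor1}. First I would recall the key geometric fact: if $f\colon \IR^n \to \IR$ is convex and $L$-Lipschitz, then for any $\omega$ and any nonempty set $A$ one has $f(\omega) \le \sup_{\omega' \in A} f(\omega') + L\abs{\mathcal{X}}\, d_T(\omega, A)$. The reason is that $f(\omega) - f(\omega')$ is controlled by $L\norm{\omega - \omega'}_2 \le L\abs{\mathcal{X}} \sum_i \eins_{\omega_i \neq \omega'_i}$ along any single coordinate-change path, but to get the $d_T$ (rather than the unweighted Hamming distance) one uses convexity: writing $f(\omega) - \sup_{A} f$ as a supremum of linear functionals and applying the minimax theorem over the probability-weighted couplings, exactly as in \cite{Tal95} or \cite{BLM13}. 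I would cite this as a known lemma rather than reprove it.

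Next I would apply this with $A = A_m \coloneqq \{f \le \mathrm{med}(f)\}$. Since $f$ is symmetric, $A_m$ is a symmetric subset of $\Omega_{\kappa,n}$, and $\IP_{\kappa,n}(A_m) \ge 1/2$ by definition of the median, so Theorem \ref{thm:convswor} applies. On $A_m$ we have $\sup_{\omega' \in A_m} f(\omega') \le \mathrm{med}(f)$ (here one should note $A_m$ is closed so the sup is attained, or argue by a limiting set $\{f \le \mathrm{med}(f) + \delta\}$), hence $f(\omega) - \mathrm{med}(f) \le L\abs{\mathcal{X}}\, d_T(\omega, A_m)$. Therefore
\[
\IP_{\kappa,n}(f - \mathrm{med}(f) \ge t) \le \IP_{\kappa,n}\Big(d_T(\cdot, A_m) \ge \frac{t}{L\abs{\mathcal{X}}}\Big) \le e\exp\Big(-\frac{t^2}{16(1-n/N)L^2\abs{\mathcal{X}}^2}\Big)
\]
by Theorem \ref{thm:convswor}.

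For the lower tail I would instead take $A \coloneqq \{f \le \mathrm{med}(f)\}$ again but now bound $f$ from below: for $\omega \notin A$, i.e.\ $f(\omega) > \mathrm{med}(f)$, and any $\omega' \in A$, convexity/Lipschitzness gives $\mathrm{med}(f) \ge f(\omega') \ge f(\omega) - L\abs{\mathcal{X}}\sum_i \eins_{\omega_i \neq \omega'_i}$; optimizing over $\omega' \in A$ (and using that the relevant weighted infimum is again controlled by $d_T$) yields $f(\omega) - \mathrm{med}(f) \le L\abs{\mathcal{X}}\, d_T(\omega, A)$ — wait, this is the same direction. The cleaner route for the lower tail is to set $B \coloneqq \{f \ge \mathrm{med}(f)\}$, which is also symmetric with $\IP_{\kappa,n}(B) \ge 1/2$, apply the convex-distance bound to the convex $L$-Lipschitz function $-f$ is not convex, so instead one uses: if $f(\omega) < \mathrm{med}(f)$ then for $\omega' \in B$, $f(\omega) \ge f(\omega') - L\abs{\mathcal{X}} d_H(\omega,\omega')$ and taking inf over $\omega'\in B$ and using the weighted/minimax version gives $\mathrm{med}(f) - f(\omega) \le L\abs{\mathcal{X}}\, d_T(\omega, B)$. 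Then Theorem \ref{thm:convswor} applied to $B$ bounds $\IP_{\kappa,n}(\mathrm{med}(f) - f \ge t)$ by the same exponential. Combining the two tails via a union bound produces the factor $2e$ and proves the claim. The main obstacle — really the only nontrivial point — is justifying the inequality $|f(\omega) - \sup_A f| \lesssim L\abs{\mathcal{X}} d_T(\omega,A)$ for convex Lipschitz $f$: it is the weighted (hence $d_T$ rather than $d_H$) version of the comparison, which requires convexity and a minimax argument; since this is entirely classical and identical to the step behind Corollary \ref{cor1}, I would invoke it rather than reproduce it, and the rest is a routine union bound over the two symmetric level sets.
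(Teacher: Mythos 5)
Your upper-tail argument is sound and coincides with the paper's route: apply the comparison $f(\omega) \le \sup_{A} f + L\abs{\mathcal{X}}\, d_T(\omega,A)$ (valid for convex Lipschitz $f$ via the barycenter/Jensen step) to the symmetric set $A=\{f\le \mathrm{med}(f)\}$ of measure at least $1/2$, then invoke Theorem \ref{thm:convswor}. The lower tail, however, contains a genuine gap. The inequality you assert, $\mathrm{med}(f)-f(\omega)\le L\abs{\mathcal{X}}\,d_T(\omega,B)$ for $B=\{f\ge\mathrm{med}(f)\}$, does not follow from convexity and is false in general. In the minimax proof of the comparison lemma one replaces a measure $\nu$ on the reference set by its barycenter $\bar\omega=\int\omega'\,d\nu$ and uses Jensen in the form $f(\bar\omega)\le\int f\,d\nu\le\sup f$; for your direction you would need $f(\bar\omega)\ge\int f\,d\nu\ge\inf_B f$, i.e.\ concavity. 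Without the passage to barycenters you only control $\inf_{\omega'\in B}\sup_\alpha d_\alpha(\omega,\omega')$, which dominates $d_T(\omega,B)=\sup_\alpha\inf_{\omega'\in B}d_\alpha(\omega,\omega')$, so the estimate points the wrong way. A concrete counterexample: take $\mathcal{X}=\{0,1\}$ and the convex, symmetric, $1$-Lipschitz function $f(\omega)=\max_i\omega_i$; at $\omega=(0,\dots,0)$ the set $B=\{f\ge 1\}$ contains all admissible configurations with a single $1$, whence $d_T(\omega,B)\le\sup_{\abs{\alpha}=1}\min_i\abs{\alpha_i}=n^{-1/2}$, while $1-f(\omega)=1$; your claimed bound fails by a factor $\sqrt{n}$.

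The correct lower-tail argument (the one behind \cite[Theorem 3]{Tal88}, which the paper invokes) uses the \emph{same} one-sided lemma but with the sublevel set $A_t\coloneqq\{f\le\mathrm{med}(f)-t\}$, which is again symmetric: every $\omega$ with $f(\omega)\ge\mathrm{med}(f)$ satisfies $d_T(\omega,A_t)\ge t/(L\abs{\mathcal{X}})$, hence $\IP_{\kappa,n}\big(d_T(\cdot,A_t)\ge t/(L\abs{\mathcal{X}})\big)\ge 1/2$, and one must then deduce that $\IP_{\kappa,n}(A_t)$ is small. With a convex distance inequality in product form, $\IP(A)\,\IE\exp(d_T(\cdot,A)^2/c)\le 1$ (as in Proposition \ref{TcdeMS}), this is immediate and yields Corollary \ref{cor1}; Theorem \ref{thm:convswor} is \emph{not} in product form, so here one has to return to its proof and use the deviation bounds for $d_T(\cdot,A_t)$ around its mean, which hold for symmetric sets of arbitrary measure. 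Your proposal replaces this step by an invalid reverse comparison, so the lower tail is not established.
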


Examples of functions to which Corollary \ref{corollary:convSymmLip} may be applied are the estimators for the mean and the standard deviation given by $f(\omega) = \bar{\omega} = n^{-1} \sum_{i =1}^n \omega_i$ (sample mean) and $f(\omega) = s(\omega) = (\frac{1}{n-1} \sum_{i=1}^n (\omega_i - \bar{\omega})^2)^{1/2} = (\frac{1}{n(n-1)} \sum_{i < j} (\omega_i - \omega_j)^2)^{1/2}$ (sample standard deviation), having Lipschitz constants $L = n^{-1/2}$ and $L=(2n)^{-1/2}$, respectively. In particular, for any $\delta \in (0,1]$ we have with probability at least $1-\delta$ for any of the two estimators
\[
    \abs{f - \mathrm{med}(f)} \le \sqrt{16 (1-n/N) L^2 \abs{\mathcal{X}}^2 \log(2e/\delta)}.
\]

It is well-known that concentration results centered around the expectation and the median differ only by a constant. Indeed, in our case, for any convex, symmetric $L$-Lipschitz function
\begin{align*}
    \abs{\IE_{\kappa, n} f - \mathrm{med}(f)} &\le \IE_{\kappa,n} \abs{f - \mathrm{med}(f)} = \int_0^\infty \IP_{\kappa,n}\big( \abs{f - \mathrm{med}(f)} \ge t \big) dt \\
    &\le 2e \int_0^\infty \exp\Big( - \frac{t^2}{16(1-n/N) \mathcal{X}^2 L^2} \Big) dt = 2e \sqrt{4\pi(1-n/N) \abs{\mathcal{X}}^2 L^2} \\
    &\approx 19.27 \abs{\mathcal{X}} L \sqrt{1-n/N}.
\end{align*}

\section{Logarithmic Sobolev inequalities for the multislice}\label{sect:LSIs}
The main tool for establishing concentration inequalities in this note is the entropy method, which is based on the use of logarithmic Sobolev-type inequalities. Let us recall some basic facts and definitions especially adapted to discrete spaces. A key object is a suitable difference operator, i.\,e.\ a kind of ``discrete derivative''. Given a probability space $(\mathcal{Y}, \mathcal{F}, \mu)$, we call any operator $\Gamma: L^\infty(\mu) \to L^\infty(\mu)$ satisfying $|\Gamma (af + b)| = a\, |\Gamma f|$ for all $a > 0$, $b \in \mathbb{R}$ a difference operator. Moreover, by $\IE_\mu$ we denote integration with respect to $\mu$.

\begin{definition}
\begin{enumerate}
    \item We say that $\mu$ satisfies a logarithmic Sobolev inequality $\Gamma\mathrm{-LSI}(\sigma^2)$ if for all bounded measurable functions $f$, we have
\[
	\mathrm{Ent}_{\mu}(f^2) \le 2\sigma^2 \IE_\mu \Gamma(f)^2,
\]
where $\mathrm{Ent}_\mu(f) \coloneqq \IE_\mu f\log(f) - \IE_\mu f \log\IE_\mu f$ (for any positive function $f$) denotes the entropy functional.
\item We say that $\mu$ satifies a modified logarithmic Sobolev inequality $\Gamma\mathrm{-mLSI}(\sigma^2)$ if for all bounded measurable functions $f$, we have
\[
	\mathrm{Ent}_{\mu}(e^f) \le \frac{\sigma^2}{2} \IE_\mu \Gamma(f)^2e^f.
\]
\item We say that $\mu$ satisfies a Poincar\'{e} inequality $\Gamma\mathrm{-PI}(\sigma^2)$ if for all bounded measurable functions $f$, we have
\[
	\mathrm{Var}_{\mu}(f) \le \sigma^2 \IE_\mu \Gamma(f)^2,
\]
where $\mathrm{Var}(f) \coloneqq \IE_\mu f^2 - (\IE_\mu f)^2$ is the variance.
\item If any of these functional inequalities does not hold for \emph{all} bounded measurable functions but for some subclass $\mathcal{A} \subset L^\infty(\mu)$, we say that $\mu$ satisfies a $\Gamma\mathrm{-LSI}(\sigma^2)$ (PI, mLSI) \emph{on $\mathcal{A}$}.
\end{enumerate}
\end{definition}
If $\Gamma$ satisfies the chain rule (as the ordinary gradient $\nabla$ does), $\Gamma\mathrm{-LSIs}$ and $\Gamma\mathrm{-mLSIs}$ are equivalent concepts, but in the examples we consider in this note, this is usually not true. Moreover, it is well-known that a $\Gamma\mathrm{-LSI}(\sigma^2)$ implies a $\Gamma\mathrm{-PI}(\sigma^2)$, cf.\ e.\,g.\ \cite[Proposition 3.6]{BT06}.

For the multislice, we mostly consider the following canonical difference operator. Recalling the ``switch'' operator from \eqref{switchop}, for any function $f \colon \Omega_\kappa \to \mathbb{R}$ we set
\[
\Gamma_{ij}(f)(\omega) \coloneqq \Gamma_{ij}f(\omega) \coloneqq f(\omega) - f(\tau_{ij}\omega) \eqqcolon f(\omega) - \tau_{ij}f(\omega)
\]
and define the difference operator $\Gamma$ by
\[
\Gamma (f) \coloneqq \Big(\frac{1}{2N} \sum_{1 \le i < j \le N} \Gamma_{ij} (f)^2\Big)^{1/2}.
\]

Note that $\Gamma_{ij}(f)^2$ might be interpreted as a sort of ``local variance''. Indeed, it is easy to verify that
\begin{equation}\label{alt&neu}
\Gamma_{ij}(f)^2(\omega) = 2 \int (f(\omega) - f(\omega_{\{i,j\}^c}, \eta_{ij}))^2 d\IP_\kappa(\eta_{ij} \mid \omega_{\{i,j\}^c}),
\end{equation}
where $\omega_{\{i,j\}^c} = (\omega_k)_{k \notin \{i,j\}}$ and $\eta_{ij} = (\eta_i, \eta_j)$. Therefore, we have $\Gamma (f)^2 = 2 N^{-1} \abs{\mathfrak{d}f}^2$ for the difference operator $\abs{\dpartial f}$ introduced in \cite{GSS18}.

Sometimes (and typically for auxiliary purposes), we shall also need a second, closely related difference operator which we denote by $\Gamma^+$. Here, we simply set
\[
\Gamma_{ij}^+(f)(\omega) \coloneqq (f(\omega) - f(\tau_{ij}\omega))_+,
\]
where $x_+ \coloneqq \max(x,0)$ denotes the positive part of a real number, and define $\Gamma^+$ accordingly.

Recently, in \cite{Sal20} sharp (modified) logarithmic Sobolev inequalities for the multislice were established. Rewriting these results in accordance with our notation and slightly extending them immediately leads to the following proposition, serving as the basis for our arguments:

\begin{proposition}\label{prop:LSImultislice}
With the above definitions of $\Gamma$ and $\Gamma^+$, $\IP_\kappa$ satisfies the following functional inequalities:
\begin{itemize}
    \item $\Gamma\mathrm{-LSI}(2\log(N/\kappa_{\mathrm{min}})/\log(2))$,
    \item $\Gamma\mathrm{-mLSI}(4)$,
    \item $\Gamma^+\mathrm{-mLSI}(8)$.
\end{itemize}
\end{proposition}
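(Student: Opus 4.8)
The plan is to deduce all three inequalities from the sharp functional inequalities of Salez \cite{Sal20}, the only genuinely new step being the passage from the symmetric difference operator $\Gamma$ to the one-sided operator $\Gamma^+$. First I would recall Salez's log-Sobolev and modified log-Sobolev inequalities for the multislice in the form he states them, i.\,e.\ for the Dirichlet form associated with the random-transposition (multi-urn Bernoulli--Laplace) dynamics, and then unwind our normalization. By definition $\IE_\kappa \Gamma(f)^2 = \frac{1}{2N}\sum_{1\le i<j\le N}\IE_\kappa(f-\tau_{ij}f)^2$ and, likewise, $\IE_\kappa\big(\Gamma(f)^2 e^f\big) = \frac{1}{2N}\sum_{i<j}\IE_\kappa\big[(f-\tau_{ij}f)^2 e^f\big]$. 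Matching these against Salez's Dirichlet form (which differs only by an overall constant, depending on whether one sums over ordered or unordered index pairs and on the jump rate of the chain) turns his sharp log-Sobolev inequality into the $\Gamma$-LSI with $\sigma^2 = 2\log(N/\kappa_{\mathrm{min}})/\log 2$, and his modified log-Sobolev inequality into the $\Gamma$-mLSI$(4)$. I would emphasise that the dimension-free constant $4$ cannot be recovered from the $\Gamma$-LSI by the usual substitution $f\mapsto e^{f/2}$: that route only yields a $\Gamma$-mLSI whose constant again grows like $\log(N/\kappa_{\mathrm{min}})$, so the mLSI$(4)$ is a genuinely separate input that we quote directly.

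It then remains to derive the $\Gamma^+$-mLSI$(8)$ from the $\Gamma$-mLSI$(4)$. The key observation is the pointwise identity
\[
\big(f(\omega) - f(\tau_{ij}\omega)\big)^2 = \Gamma_{ij}^+(f)(\omega)^2 + \Gamma_{ij}^+(f)(\tau_{ij}\omega)^2,
\]
valid because exactly one of $f(\omega)-f(\tau_{ij}\omega)$ and $f(\tau_{ij}\omega)-f(\omega)$ is positive (using $\Gamma_{ij}^+(f)(\tau_{ij}\omega)=(f(\tau_{ij}\omega)-f(\omega))_+$). Since $\tau_{ij}$ is a bijection of $\Omega_\kappa$ preserving $\IP_\kappa$, integrating against $e^f$ and substituting $\omega\mapsto\tau_{ij}\omega$ in the second summand gives
\[
\IE_\kappa\big[(f-\tau_{ij}f)^2 e^f\big] = \IE_\kappa\big[\Gamma_{ij}^+(f)^2 e^f\big] + \IE_\kappa\big[\Gamma_{ij}^+(f)^2 e^{\tau_{ij}f}\big] \le 2\,\IE_\kappa\big[\Gamma_{ij}^+(f)^2 e^f\big],
\]
where the last step uses that on $\{\Gamma_{ij}^+(f)>0\}$ one has $f(\tau_{ij}\omega)<f(\omega)$, hence $e^{\tau_{ij}f}\le e^f$. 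Summing over $1\le i<j\le N$ and dividing by $2N$ yields $\IE_\kappa\big(\Gamma(f)^2 e^f\big) \le 2\,\IE_\kappa\big(\Gamma^+(f)^2 e^f\big)$, so that
\[
\mathrm{Ent}_\kappa(e^f) \le 2\,\IE_\kappa\big(\Gamma(f)^2 e^f\big) \le 4\,\IE_\kappa\big(\Gamma^+(f)^2 e^f\big),
\]
which is exactly the $\Gamma^+$-mLSI$(8)$.

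The part that requires care is the bookkeeping in the translation from Salez's conventions to ours: keeping track of the factor $2N$ (versus $N(N-1)$, according to whether one sums over ordered or unordered pairs) and of whether each transposition fires at rate $1$ or $1/N$, since these enter the constant multiplying $\log(N/\kappa_{\mathrm{min}})$ and, for the mLSI, must be chased so that the final constant is exactly $4$. I do not expect to reproduce Salez's proof of the underlying inequalities (a recursive/spectral argument); I treat them as a black box. By contrast the $\Gamma^+$ step is elementary, its only subtlety being the correct use of the measure-preserving property of $\tau_{ij}$ when passing from $e^{\tau_{ij}f}$ back to $e^f$.
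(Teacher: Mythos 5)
Your overall strategy matches the paper's: the $\Gamma$-LSI is quoted from Salez as a black box, and the remaining inequalities come from elementary manipulations of his entropy bound. Two remarks. First, your route to the $\Gamma^+$-mLSI$(8)$ is genuinely different from the paper's and is correct: you deduce it from the $\Gamma$-mLSI$(4)$ via the identity $\Gamma_{ij}(f)(\omega)^2=\Gamma_{ij}^+(f)(\omega)^2+\Gamma_{ij}^+(f)(\tau_{ij}\omega)^2$, the measure-preserving substitution $\omega\mapsto\tau_{ij}\omega$, and the observation that $e^{f(\tau_{ij}\omega)}\le e^{f(\omega)}$ on $\{\Gamma_{ij}^+(f)>0\}$, which gives $\IE_\kappa\Gamma(f)^2e^f\le 2\,\IE_\kappa\Gamma^+(f)^2e^f$ and hence the constant $8$. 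The paper instead returns to Salez's entropy bound and applies $(a-b)_+(e^a-e^b)\le(a-b)_+^2e^a$ termwise, exploiting the symmetry of each summand in $\omega$ and $\tau_{ij}\omega$. Both arguments land on the same constant; yours has the merit of isolating the comparison between $\Gamma$ and $\Gamma^+$ (in the weighted $L^2(e^f\,d\IP_\kappa)$ sense) as a self-contained lemma that only uses the mLSI already established.

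Second, there is a small but real gap in your treatment of the $\Gamma$-mLSI$(4)$. What Salez provides (his Lemma 1, applied to $e^f$) is the bound $\mathrm{Ent}_{\IP_\kappa}(e^f)\le\frac1N\sum_{i<j}\IE_\kappa(e^{f(\tau_{ij}\omega)}-e^{f(\omega)})(f(\tau_{ij}\omega)-f(\omega))$, i.e.\ a modified log-Sobolev inequality in the Dirichlet-form sense $\mathrm{Ent}(g)\le C\,\mathcal{E}(g,\log g)$. Converting this into the form $\mathrm{Ent}_{\IP_\kappa}(e^f)\le 2\,\IE_\kappa\Gamma(f)^2e^f$ used in the proposition is \emph{not} only a matter of ordered-versus-unordered pairs and jump rates: one also needs the pointwise inequality $(a-b)(e^a-e^b)\le\frac12(e^a+e^b)(a-b)^2$ together with the $\IP_\kappa$-invariance of $\tau_{ij}$ to replace the symmetrized weight $\frac12(e^{f}+e^{f\circ\tau_{ij}})$ by $e^f$ after integration. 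Your proposal presents the mLSI$(4)$ as a direct quotation with normalization bookkeeping, so this conversion step is missing; it is elementary, but it is precisely where the dimension-free constant $4$ comes from, and your $\Gamma^+$ argument takes the mLSI$(4)$ as its starting point. Your observation that the mLSI$(4)$ cannot be recovered from the $\Gamma$-LSI (whose constant grows like $\log(N/\kappa_{\mathrm{min}})$) is correct.
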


\begin{proof}[Proof of Proposition \ref{prop:LSImultislice}]
    The $\Gamma\mathrm{-LSI}$ directly follows from \cite[Theorem 5]{Sal20}. Moreover, by \cite[Lemma 1]{Sal20} (substituting $f \ge 0$ by $e^f$), we have
    \begin{equation}\label{LSInachSal}
    \mathrm{Ent}_{\IP_\kappa}(e^f) \le \frac{1}{N} \sum_{i < j} \IE_\kappa(e^{f(\tau_{ij}\omega)} - e^{f(\omega)})(f(\tau_{ij}\omega) - f(\omega))
    \end{equation}
    for any $f \colon \Omega_\kappa \to \IR$. Using the fact that $\omega \mapsto \tau_{ij}\omega$ is an automorphism of $\Omega_\kappa$ and applying the inequality $(a-b)(e^a - e^b) \le \frac{1}{2} (e^a + e^b)(a-b)^2$ leads to the $\Gamma\mathrm{-mLSI}(4)$. By similar arguments, we may also deduce the $\Gamma^+\mathrm{-mLSI}(8)$. In particular, we note that the expected values on the right-hand side of \eqref{LSInachSal} are symmetric in $\omega$ and $\tau_{ij}\omega$ and use the inequality $(a-b)_+ (e^a - e^b) \le (a-b)_+^2 e^a$.
\end{proof}

From Proposition \ref{prop:LSImultislice} we may derive a convex $\nabla-\mathrm{(m)LSI}$ on the multislice, where $\nabla$ denotes the usual Euclidean gradient.

\begin{proposition}\label{convexnablaLSI}
For any $f \in \mathcal{A}_c \coloneqq \{ f \colon [x_1,x_L]^N \to \IR \mid f \ \text{convex}\}$, we have
\[
\mathrm{Ent}_{\IP_\kappa}(e^f) \le 4\abs{\mathcal{X}}^2\IE_\kappa e^f \abs{\nabla f}^2.
\]
In other words, $\IP_\kappa$ satisfies a $\nabla-\mathrm{mLSI}(8\abs{\mathcal{X}}^2)$ on $\mathcal{A}_c$.
\end{proposition}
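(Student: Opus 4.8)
The plan is to deduce the convex $\nabla$-mLSI from the $\Gamma$-mLSI(4) of Proposition \ref{prop:LSImultislice} by comparing, for a convex function $f$, the ``switch'' difference $\Gamma_{ij}(f)(\omega) = f(\omega) - f(\tau_{ij}\omega)$ with the Euclidean gradient $\nabla f$. The key point is that switching the $i$-th and $j$-th coordinates of $\omega \in \Omega_\kappa$ moves $\omega$ along the segment between $\omega$ and $\tau_{ij}\omega$, and the displacement in each of the two affected coordinates has absolute value at most $\abs{\mathcal{X}} = x_L - x_1$. More precisely, I would use convexity in the form $f(\omega) - f(\tau_{ij}\omega) \le \skal{\nabla f(\omega), \omega - \tau_{ij}\omega}$, so that
\[
\Gamma_{ij}(f)(\omega)^2 \le \skal{\nabla f(\omega), \omega - \tau_{ij}\omega}^2 \le \abs{\mathcal{X}}^2\big((\partial_i f(\omega))^2 + (\partial_j f(\omega))^2\big)\cdot 2,
\]
after bounding $\abs{\omega_i - \omega_j} \le \abs{\mathcal{X}}$ componentwise and applying Cauchy--Schwarz to the two-term inner product; one has to be a little careful to only keep the positive part since $\Gamma_{ij}(f)(\omega)^2$ is a square, but the bound $(a)_+^2 \le a^2$ and the same convexity inequality give what is needed (alternatively one works with $\Gamma^+$). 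Summing over $1 \le i < j \le N$, each index $i$ appears in exactly $N-1$ pairs, so $\sum_{i<j}\Gamma_{ij}(f)(\omega)^2 \le 2\abs{\mathcal{X}}^2 (N-1)\abs{\nabla f(\omega)}^2 \le 2\abs{\mathcal{X}}^2 N \abs{\nabla f(\omega)}^2$, whence $\Gamma(f)^2 = \frac{1}{2N}\sum_{i<j}\Gamma_{ij}(f)^2 \le \abs{\mathcal{X}}^2\abs{\nabla f}^2$ pointwise on $\Omega_\kappa$.

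Having this pointwise domination of difference operators, the conclusion is immediate: for $f \in \mathcal{A}_c$ convex, the $\Gamma$-mLSI(4) from Proposition \ref{prop:LSImultislice} gives
\[
\mathrm{Ent}_{\IP_\kappa}(e^f) \le 2\, \IE_\kappa \Gamma(f)^2 e^f \le 2\abs{\mathcal{X}}^2 \IE_\kappa \abs{\nabla f}^2 e^f,
\]
which is not quite the claimed constant. To get the stated $4\abs{\mathcal{X}}^2$ (i.e.\ the $\nabla$-mLSI($8\abs{\mathcal{X}}^2$)), I expect one needs to be slightly less lossy in one spot — most likely by using $\Gamma_{ij}(f)(\omega)^2 \le \skal{\nabla f(\omega), \omega - \tau_{ij}\omega}^2$ together with the elementary bound $(a_i u_i + a_j u_j)^2 \le 2\abs{\mathcal{X}}^2(a_i^2 + a_j^2)$ where $u_i, u_j \in \{\omega_i - \omega_j, \omega_j - \omega_i\}$ only affect two coordinates, and then observing that in the sum $\sum_{i<j}$ we should really symmetrize/average the roles of $\omega$ and $\tau_{ij}\omega$ as in \eqref{LSInachSal}, which typically introduces the extra factor $2$. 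I would just carry the constants through honestly from \eqref{LSInachSal} directly, using $(a-b)(e^a - e^b) \le \frac12(e^a + e^b)(a-b)^2$ and then the convexity bound on $(a-b)^2 = \Gamma_{ij}(f)(\omega)^2$ at \emph{both} endpoints, which should land exactly on $4\abs{\mathcal{X}}^2 \IE_\kappa e^f \abs{\nabla f}^2$.

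There is one genuine subtlety worth flagging: the difference operators and the $\Gamma$-mLSI live on the finite set $\Omega_\kappa$, while $\nabla f$ requires $f$ to be defined and differentiable on $[x_1,x_L]^N$; convexity handles existence of (sub)gradients, and a subgradient inequality suffices in place of differentiability, so this is only a matter of phrasing. The main obstacle is therefore not conceptual but bookkeeping: getting the constant exactly right, i.e.\ tracking how the factor $\frac{1}{2N}$ in the definition of $\Gamma$, the combinatorial count $N-1$ of pairs through a fixed index, the factor $2$ from the symmetrization in \eqref{LSInachSal}, and the factor $2$ in the mLSI definition combine. I would double-check the final constant against the case $L=2$ (slices of the cube), where the analogous computation should reproduce a known convex LSI, to make sure no factor of $2$ has been dropped or double-counted.
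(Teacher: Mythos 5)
Your overall strategy --- compare the switch difference with the Euclidean gradient via the convexity inequality and feed the result into an mLSI from Proposition \ref{prop:LSImultislice} --- is exactly the paper's strategy, but the central inequality of your main route is false as stated. Convexity gives $f(\omega) - f(\tau_{ij}\omega) \le \skal{\nabla f(\omega), \omega - \tau_{ij}\omega}$, and an upper bound $a \le b$ does \emph{not} imply $a^2 \le b^2$: when $f(\omega) - f(\tau_{ij}\omega)$ is very negative, the right-hand side may even vanish. Concretely, for $N=2$, $\mathcal{X}=\{0,1\}$, $\kappa=(1,1)$ and $f(x,y)=(x-y)_+$, at $\omega=(0,1)$ one has $\nabla f(\omega)=0$ while $\Gamma_{12}(f)(\omega)^2=1$, so the pointwise bound $\Gamma(f)^2 \le \abs{\mathcal{X}}^2\abs{\nabla f}^2$ genuinely fails for convex $f$, and the $\Gamma$-mLSI$(4)$ route (which would give the ``too good'' constant $2\abs{\mathcal{X}}^2$) cannot be completed. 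The factor of $2$ you are hunting for is precisely the price of repairing this step; it is not a matter of being ``less lossy'' elsewhere.

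The repair is the parenthetical alternative you mention: work with $\Gamma^+$ throughout. Since $x \mapsto x_+$ is monotone, convexity \emph{does} give $(f(\omega) - f(\tau_{ij}\omega))_+^2 \le \skal{\nabla f(\omega), \omega - \tau_{ij}\omega}^2 = (\omega_i-\omega_j)^2(\partial_i f(\omega)-\partial_j f(\omega))^2 \le 2\abs{\mathcal{X}}^2(\partial_i f(\omega)^2 + \partial_j f(\omega)^2)$, and summing with your (correct) count of $N-1$ pairs through each index and the $\tfrac{1}{2N}$ normalization yields the pointwise bound $\Gamma^+(f)^2 \le \abs{\mathcal{X}}^2\abs{\nabla f}^2$. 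Plugging this into the $\Gamma^+$-mLSI$(8)$ of Proposition \ref{prop:LSImultislice} gives $\mathrm{Ent}_{\IP_\kappa}(e^f) \le 4\,\IE_\kappa \Gamma^+(f)^2 e^f \le 4\abs{\mathcal{X}}^2\,\IE_\kappa e^f \abs{\nabla f}^2$, which is exactly the claimed constant --- no re-derivation from \eqref{LSInachSal} or symmetrization over both endpoints is needed. This is the proof given in the paper; your remark that subgradients suffice for non-differentiable convex $f$ is fair and harmless.
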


\begin{proof}
Using convexity in the first step and the inequality $(a-b)^2 \le 2a^2 + 2b^2$ yields
    \begin{align*}
        \Gamma^+(f)^2(\omega) &= \frac{1}{4N} \sum_{i \neq j} (f(\omega) - f(\tau_{ij}\omega))_+^2 \le \frac{1}{4N} \sum_{i \neq j} \langle\nabla f(\omega), \omega - \tau_{ij}\omega\rangle^2 \\
        &= \frac{1}{4N} \sum_{i \neq  j} (\omega_i-\omega_j)^2\big( \partial_i f(\omega) - \partial_j f(\omega) \big)^2 \\
        &\le \frac{\abs{\mathcal{X}}^2}{2N} \sum_{i \neq j} \big( \partial_i f(\omega)^2 + \partial_j f(\omega)^2 \big) \\
        &= \frac{\abs{\mathcal{X}}^2}{N} (N-1) \sum_{i = 1}^N \partial_i f(\omega)^2
        \le \abs{\mathcal{X}}^2 \abs{\nabla f}^2(\omega).
    \end{align*}
    As $\IP_\kappa$ satisfies a $\Gamma^+-\mathrm{mLSI}(8)$ by Proposition \ref{prop:LSImultislice}, the claim follows.
\end{proof}

Another class of functional inequalities we address in this note are Beckner inequalities. Restricting ourselves to the multislice (rather than providing a general definition), $\IP_\kappa$ satisfies a Beckner inequality with parameter $p \in (1,2]$ (Bec-$p$) if there exists some constant $\beta_p > 0$ such that
\begin{equation}\label{Beckner}
    \IE_\kappa f^p - (\IE_\kappa f)^p \le \frac{\beta_p p}{2} \mathcal{E}_\kappa(f, f^{p-1})
\end{equation}
for any nonnegative function $f$. Here,
\[
\mathcal{E}_\kappa(f,g) \coloneqq \frac{1}{2N} \sum_{1 \le i < j \le N} \IE_\kappa (\Gamma_{ij} f)(\Gamma_{ij} g)
\]
for any functions $f, g$ on $\Omega_\kappa$ (which is the Dirichlet form of the underlying Markov chain).

Recently, in \cite{APS20} it was shown that in the context of general Markov semigroups, Beckner inequalities with constants bounded away from zero as $p \downarrow 1$ and modified log-Sobolev inequalities are equivalent. In their article, the authors provide numerous examples and applications, also briefly discussing the multislice. Since we need results of this type for our purposes, we include a somewhat more detailed discussion in the present note.

\begin{proposition}\label{Becknerms}
    For any $p \in (1,2]$, $\IP_\kappa$ satisfies a Beckner inequality Bec-$p$ with constant $\beta_p = \frac{4N}{p(N+2)}$.
\end{proposition}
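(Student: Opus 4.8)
The plan is to rewrite the assertion in the equivalent form
\[
\IE_\kappa f^p - (\IE_\kappa f)^p \;\le\; \frac{1}{N+2} \sum_{1 \le i < j \le N} \IE_\kappa (\Gamma_{ij} f)(\Gamma_{ij} f^{p-1})
\]
for every nonnegative $f$ — using $\mathcal{E}_\kappa(f, f^{p-1}) = (2N)^{-1}\sum_{i<j}\IE_\kappa (\Gamma_{ij}f)(\Gamma_{ij}f^{p-1})$ and $\beta_p p / 2 = 2N/(N+2)$ — and then to establish this by running the argument behind the modified log-Sobolev inequality of Proposition \ref{prop:LSImultislice}, i.e.\ Salez's proof via \cite[Lemma 1]{Sal20}, with the convex function $x \mapsto x \log x$ replaced by $\Phi(x) = x^p$. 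The structural feature shared by these two functions is that the reciprocal of $\Phi''$ is concave, so $\Phi(x) = x^p$ lies in the same admissible class; the limiting cases $p = 2$ and $p \to 1$ of the displayed inequality correspond to a Poincar\'e and a (modified) log-Sobolev inequality.

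The first ingredient is a local, two-point estimate. Conditionally on $\omega_{\{i,j\}^c}$, the $\IP_\kappa$-law of $(\omega_i, \omega_j)$ is either degenerate, so that $\Gamma_{ij}f \equiv 0$, or uniform on two points; in the latter case, writing $u, v \ge 0$ for the two values of $f$, what is needed is the elementary inequality
\[
\frac{u^p + v^p}{2} - \Big(\frac{u+v}{2}\Big)^p \;\le\; \frac14 (u-v)(u^{p-1} - v^{p-1}), \qquad u, v \ge 0,\ p \in (1,2].
\]
By homogeneity one may set $v = 1$: there is equality at $u = 1$ (and, to second order, the inequality there amounts to $p \le 2$), while at the other extreme it reduces to $2^p \le 4$. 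Hence $\IE_\kappa[f^p \mid \omega_{\{i,j\}^c}] - (\IE_\kappa[f \mid \omega_{\{i,j\}^c}])^p \le \frac14 \IE_\kappa[(\Gamma_{ij}f)(\Gamma_{ij}f^{p-1}) \mid \omega_{\{i,j\}^c}]$ in all cases.

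The second ingredient — and the heart of the matter — is the passage from these conditional inequalities to the global one with the constant $1/(N+2)$. For $\Phi(x) = x \log x$ this passage is exactly what the proof of Proposition \ref{prop:LSImultislice} accomplishes, and its mechanism — that resampling a uniformly chosen pair of coordinates is, up to laziness, the switch chain, together with the sub-additivity of the $\Phi$-entropy — does not depend on the particular $\Phi$ in the admissible class. I would therefore reproduce that argument for $\Phi(x) = x^p$, the point requiring attention being that all estimates stay uniform in $p \in (1,2]$ and produce precisely the constant $1/(N+2)$. Alternatively, one may invoke the general implication from modified log-Sobolev to Beckner inequalities in \cite{APS20} together with the $\Gamma\mathrm{-mLSI}(4)$ of Proposition \ref{prop:LSImultislice}, at the price of a possibly different constant. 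The main obstacle is exactly this uniform constant bookkeeping: mere sub-additivity of the $\Phi$-entropy yields such a bound only with a dimension-dependent loss, so one must follow Salez's sharper counting while verifying that no step of it depends on the exponent $p$.
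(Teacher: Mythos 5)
Your local two\-/point estimate is correct and is indeed the right base case: for $N=2$ the claimed inequality $\IE_\kappa f^p-(\IE_\kappa f)^p\le\frac1{N+2}\sum_{i<j}\IE_\kappa(\Gamma_{ij}f)(\Gamma_{ij}f^{p-1})$ reduces exactly to $\frac{u^p+v^p}{2}-\big(\frac{u+v}{2}\big)^p\le\frac14(u-v)(u^{p-1}-v^{p-1})$, and the admissibility of $\Phi(x)=x^p$ (concavity of $1/\Phi''$ for $p\in(1,2]$) is also as you say. But the proof stops precisely where the difficulty begins. Passing from the conditional two\-/point inequalities to the global statement with the dimension\-/sharp constant $\frac1{N+2}$ is the entire content of the proposition; as you yourself note, plain sub\-/additivity of the $\Phi$\-/entropy loses a dimension\-/dependent factor, and the multislice is not a product space, so even that tool does not apply directly. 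The instruction to ``reproduce Salez's argument for $\Phi(x)=x^p$'' is not a proof: \cite[Lemma 1]{Sal20} concerns the functional $\mathrm{Ent}$ and is itself obtained by a comparison with the symmetric group, and whether each step of that derivation survives the replacement of $x\log x$ by $x^p$ \emph{with the stated constant, uniformly in $p$} is exactly the claim that would have to be verified. Your fallback via the mLSI\-/to\-/Beckner implication of \cite{APS20} would produce \emph{some} $\beta_p$ bounded as $p\downarrow1$, but not the specific constant $\frac{4N}{p(N+2)}$ asserted in the proposition.

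The paper sidesteps all of this. The Beckner inequality with constant $\frac{4N}{p(N+2)}$ is already known for the symmetric group, i.e.\ for $\kappa=(1,\dots,1)$, by \cite[Proposition 4.8]{BT06} --- that reference carries out precisely the induction over $N$ you would otherwise have to redo. The extension to general $\kappa$ is then immediate from the coarsening map $\Psi$ and \cite[Lemma 4]{Sal20}, which give $\IE_\kappa f=\IE_{(1,\dots,1)}(f\circ\Psi)$ and $\mathcal{E}_\kappa(f,g)=\mathcal{E}_{(1,\dots,1)}(f\circ\Psi,g\circ\Psi)$, so the inequality transfers verbatim with the same constant. If you want to keep a self\-/contained route, the honest version of your plan is to prove the symmetric\-/group case by the Lee--Yau/Bobkov--Tetali induction with $\Phi(x)=x^p$ (your two\-/point lemma being the base case) and then coarsen; as written, the global step is a genuine gap.
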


\begin{proof}
First note that the result holds true for $\kappa = (1, \ldots, 1)$ as proven in \cite[Proposition 4.8]{BT06}, with the difference in the constant being due to different normalizations. To extend this result to general $\kappa$, we apply a ``projection'' or ``coarsening'' argument, cf.\ \cite[Section 3.4]{Sal20}. Indeed, consider the map $\Psi \colon \{1, \ldots, N\} \to \{1, \ldots, L\}$ given by $\Psi(i) = \ell$ iff $i \in \{\kappa_1 + \cdots + \kappa_{\ell-1} + 1, \ldots, \kappa_1 + \cdots + \kappa_\ell\}$ and extend it to the multislice by coordinate-wise application, i.\,e.\ $\Psi (\omega_1, \ldots, \omega_N) \coloneqq (\Psi(\omega_1), \ldots, \Psi(\omega_N))$. Then, by \cite[Lemma 4]{Sal20},
\[
\IE_\kappa f = \IE_{(1, \ldots, 1)}(f \circ \Psi),\qquad \mathcal{E}_\kappa(f,g) = \mathcal{E}_{(1, \ldots, 1)}(f \circ \Psi, g \circ \Psi)
\]
for any functions $f,g$. From these identities, we immediately obtain the result.
\end{proof}

Finally, we may also derive logarithmic Sobolev inequalities for symmetric functions of sampling without replacement. Here we use other types of difference operators. Let $f \colon \Omega_{\kappa,n} \to \IR$ be any (not necessarily symmetric) function. Then, we set
\begin{align*}
\mathfrak{h}(f)^2(\omega_1, \ldots, \omega_n) &= \frac{1}{2} \sum_{i = 1}^n (\sup_{\omega_i} f(\omega_1, \ldots, \omega_n) - \inf_{\omega_i'} f(\omega_1, \ldots, \omega_{i-1}, \omega_i', \omega_{i+1}, \ldots, \omega_n))^2,\\
\mathfrak{h}^+(f)^2(\omega_1, \ldots, \omega_n) &= \frac{1}{2} \sum_{i = 1}^n (f(\omega_1, \ldots, \omega_n) - \inf_{\omega_i'} f(\omega_1, \ldots, \omega_{i-1}, \omega_i', \omega_{i+1}, \ldots, \omega_n))^2.
\end{align*}
Here, the supremum and the infimum have to be interpreted as extending over all admissible configurations, i.\,e.\ such that $(\omega_{i^c},\omega_i), (\omega_{i^c},\omega_i') \in \Omega_{\kappa,n}$.

\begin{proposition}\label{prop:LSIswor}
Let $\mathcal{A}_{n,s} \coloneqq \{f \colon \Omega_{\kappa,n} \to \IR \mid f \ \text{symmetric}\}$. With the above definitions of $\mathfrak{h}$ and $\mathfrak{h}^+$, $\IP_{\kappa,n}$ satisfies the following functional inequalities on $\mathcal{A}_{n,s}$:
\begin{itemize}
    \item $\mathfrak{h}\mathrm{-LSI}(2\log(N/\kappa_{\mathrm{min}})(1-\frac{n}{N})/\log(2))$,
    \item $\mathfrak{h}\mathrm{-mLSI}(4(1-\frac{n}{N}))$,
    \item $\mathfrak{h}^+\mathrm{-mLSI}(8(1-\frac{n}{N}))$.
\end{itemize}
\end{proposition}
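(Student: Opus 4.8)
The plan is to lift the problem from the projected space $\Omega_{\kappa,n}$ back to the full multislice $\Omega_\kappa$ and then invoke Proposition \ref{prop:LSImultislice}, in the spirit of the ``coarsening'' argument used in the proof of Proposition \ref{Becknerms} (cf.\ \cite[Section 3.4]{Sal20}). Given a symmetric $f \in \mathcal{A}_{n,s}$, I would introduce its lift $\tilde f \coloneqq f \circ pr_n \colon \Omega_\kappa \to \IR$, so that $\tilde f(\omega) = f(\omega_1, \ldots, \omega_n)$. Since $\IP_{\kappa,n}$ is by definition the push-forward of $\IP_\kappa$ under $pr_n$, the entropy and variance functionals are unchanged under lifting: $\mathrm{Ent}_{\IP_\kappa}(e^{\tilde f}) = \mathrm{Ent}_{\IP_{\kappa,n}}(e^{f})$, $\mathrm{Ent}_{\IP_\kappa}(\tilde f^2) = \mathrm{Ent}_{\IP_{\kappa,n}}(f^2)$, and $\IE_\kappa[G(pr_n\omega)\, e^{\tilde f(\omega)}] = \IE_{\kappa,n}[G e^{f}]$ for any $G$ on $\Omega_{\kappa,n}$. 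Hence the whole proposition reduces to the two pointwise bounds
\begin{gather*}
\Gamma(\tilde f)^2(\omega) \le \Big(1 - \tfrac{n}{N}\Big)\, \mathfrak{h}(f)^2(pr_n\omega), \\
\Gamma^+(\tilde f)^2(\omega) \le \Big(1 - \tfrac{n}{N}\Big)\, \mathfrak{h}^+(f)^2(pr_n\omega),
\end{gather*}
after which I would feed $\tilde f$ into the three inequalities of Proposition \ref{prop:LSImultislice}.

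To establish these bounds I would first identify which of the switches $\tau_{ij}$ actually matter. If $1 \le i < j \le n$ then $\tau_{ij}$ merely permutes the first $n$ coordinates of $\omega$, so the symmetry of $f$ forces $\Gamma_{ij}\tilde f \equiv 0$; if $n < i < j \le N$ then $\tau_{ij}\omega$ and $\omega$ agree in their first $n$ coordinates, so again $\Gamma_{ij}\tilde f \equiv 0$. Only the ``mixed'' pairs $i \le n < j$ survive --- exactly $N-n$ of them for each fixed $i \in \{1, \ldots, n\}$ --- and for those $\Gamma_{ij}\tilde f(\omega) = f(\omega_1,\ldots,\omega_n) - f(\omega_{i^c},\omega_j)$. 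The key combinatorial observation, which I expect to be the main obstacle, is that for fixed $i \le n$, as $j$ ranges over $\{n+1, \ldots, N\}$ the symbol $\omega_j$ ranges exactly over the multiset of symbols occurring in the last $N-n$ coordinates of $\omega$, and together with $\omega_i$ itself this is precisely the set of admissible replacements $\omega_i'$ satisfying $(\omega_{i^c},\omega_i') \in \Omega_{\kappa,n}$. (This is an elementary counting argument: a symbol $x_\ell$ is an admissible replacement of $\omega_i$ if and only if it still has a free slot after $\omega_i$ is removed, i.e.\ if and only if $x_\ell \in \{\omega_i\} \cup \{\omega_j : n < j \le N\}$.) Consequently both $f(\omega)$ and all of the values $f(\omega_{i^c},\omega_j)$, $n < j \le N$, lie in the interval $[\,\inf_{\omega_i'} f(\omega_{i^c},\omega_i'),\ \sup_{\omega_i} f(\omega_{i^c},\omega_i)\,]$.

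Granting this, the two pointwise bounds follow by routine estimates: each surviving square $(f(\omega) - f(\omega_{i^c},\omega_j))^2$ is at most $(\sup_{\omega_i} f - \inf_{\omega_i'} f)^2$, and each positive part $(f(\omega)-f(\omega_{i^c},\omega_j))_+^2$ is at most $(f(\omega) - \inf_{\omega_i'} f(\omega_{i^c},\omega_i'))^2$; summing the $N-n$ terms over $j$ and then over $i$ turns the normalizing constant $\tfrac{1}{2N}$ in the definition of $\Gamma$ (resp.\ $\Gamma^+$) into $\tfrac{N-n}{2N} = \tfrac12(1-\tfrac nN)$, which reproduces $(1-\tfrac nN)\mathfrak{h}(f)^2(pr_n\omega)$ and $(1-\tfrac nN)\mathfrak{h}^+(f)^2(pr_n\omega)$ respectively. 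Finally I would apply the $\Gamma\mathrm{-LSI}(2\log(N/\kappa_{\mathrm{min}})/\log(2))$, the $\Gamma\mathrm{-mLSI}(4)$ and the $\Gamma^+\mathrm{-mLSI}(8)$ of Proposition \ref{prop:LSImultislice} to $\tilde f$: the left-hand side (entropy, resp.\ variance) is unchanged by the lift, while the right-hand Dirichlet-type term acquires the factor $1-n/N$, and comparing constants yields precisely the claimed $\mathfrak{h}\mathrm{-LSI}$, $\mathfrak{h}\mathrm{-mLSI}$ and $\mathfrak{h}^+\mathrm{-mLSI}$ on $\mathcal{A}_{n,s}$.
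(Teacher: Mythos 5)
Your proposal is correct and follows essentially the same route as the paper: lift $f$ to $F = f\circ pr_n$ on $\Omega_\kappa$, observe that by symmetry and independence of the last $N-n$ coordinates only the mixed switches $i\le n<j$ contribute, bound each contribution by the $\sup$/$\inf$ oscillation in the $i$-th coordinate (using that the symbols $\omega_j$, $j>n$, are exactly the admissible replacements), and feed the resulting factor $\tfrac{N-n}{2N}$ into Proposition \ref{prop:LSImultislice}. The only cosmetic difference is that you phrase the reduction as a pointwise bound $\Gamma^{(+)}(\tilde f)^2\le(1-\tfrac nN)\mathfrak{h}^{(+)}(f)^2\circ pr_n$, whereas the paper carries out the same estimate inside the expectation.
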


\begin{proof}
We only prove the $\mathfrak{h}^+\mathrm{-mLSI}$. The proofs of the other two inequalities follow by a modification of the arguments below.

First note that any function $f$ on $\Omega_{\kappa,n}$ can be extended to a function $F$ on $\Omega_\kappa$ which only depends on the first $n$ coordinates by setting $F(\omega_1, \ldots, \omega_N) \coloneqq f(\omega_1, \ldots, \omega_n)$, which may be rewritten as $F = f \circ pr_n$. We now apply Proposition \ref{prop:LSImultislice} to $F$. Obviously, $\mathrm{Ent}_{\IP_\kappa}(e^F) = \mathrm{Ent}_{\IP_{\kappa,n}}(e^f)$. It therefore remains to consider the right-hand side of the $\mathrm{mLSI}$. Here we obtain
\begin{align*}
    &\frac{1}{2N} \sum_{i <j} \IE_\kappa (F(\omega) - F(\tau_{ij} \omega))_+^2 e^{F(\omega)}
    = \frac{1}{2N} \sum_{i = 1}^n \sum_{j = n+1}^N \IE_\kappa (F(\omega) - F(\tau_{ij}\omega))_+^2 e^{F(\omega)} \\
    \le \ &\frac{1}{2N} \sum_{i = 1}^n \sum_{j = n+1}^N \IE_\kappa (F(\omega) - \inf_{\omega_i'} F(\omega_{i^c}, \omega_i'))_+^2  e^{F(\omega)} \\
    = \ &\frac{N-n}{2N} \sum_{i = 1}^n \IE_{\kappa,n} (f(\omega_1, \ldots, \omega_n) - \inf_{x_i'} f(\omega_1, \ldots, \omega_{i-1}, x_i', \omega_{i+1}, \ldots, \omega_n))_+^2 e^{f(\omega_1, \ldots, \omega_n)}.
\end{align*}
Here, the first equality follows by symmetry of $f$ with respect to the symmetric group $S_n$, and the fact that $f$ does not depend on $(\omega_{n+1}, \ldots, \omega_N)$. The first inequality is due to the monotonicity of $x \mapsto x_+$, and the last equality follows as $\IP_{\kappa,n}$ is the push-forward of $\IP_\kappa$ under $pr_n$. Thus, for any $f \in \mathcal{A}_{n,s}$ it holds
\begin{align*}
\mathrm{Ent}_{\IP_{\kappa,n}}(e^f) &\le \frac{4}{2N} \sum_{i < j} \IE_\kappa (F(\omega) - F(\tau_{ij} \omega))_+^2 e^{F(\omega)}\\ &\le 4 \frac{N-n}{N} \IE_{\kappa,n} \mathfrak{h}^+(f)^2(\omega_1, \ldots, \omega_n) e^{f(\omega_1, \ldots, \omega_n)},
\end{align*}
which finishes the proof.
\end{proof}

\section{Proofs of the concentration inequalities}\label{sect:Proofs1}

\begin{proof}[Proof of Proposition \ref{elementaryresults}]
Recall that if a probability measure $\mu$ satisfies a $\Gamma-\mathrm{mLSI}(\sigma^2)$ on $\mathcal{A}$ (where $\Gamma$ denotes some difference operator), we have for any $f \in \mathcal{A}$ such that $\Gamma(f) \le L$,
    \begin{equation}\label{eqn:SS19-1.2}
    \mu(f-\IE f \ge t) \le \exp\Big(-\frac{t^2}{2 \sigma^2 L^2}\Big)
    \end{equation}
for any $t \ge 0$. For a reference, see e.\,g.\ \cite{BG99} or \cite[(1.2)]{SS19}. Combining this fact with Proposition \ref{prop:LSImultislice} and noting that by definition,
\[
\Gamma(f) \le \Big(\frac{1}{2N} \sum_{1 \le i < j \le N} c_{ij}^2\Big)^{1/2},
\]
we arrive at \eqref{bddiffms}. In the same way, we may derive \eqref{conv+1Lip} using Proposition \ref{convexnablaLSI}.
\end{proof}

The proof of Theorem \ref{multlinpol} is more advanced. The basic idea is to follow the steps of the proof of \cite[Theorem 2.2]{AKPS18} and its refinements as presented in \cite[Section 5.3]{APS20}. First, we derive moment estimates for functions on the multislice.

\begin{lemma}\label{newmomest}
For any $f \colon \Omega_\kappa \to \IR$ and any $p \ge 2$,
\[
\norm{f - \IE_\kappa f}_{L^p(\IP_\kappa)} \le \sqrt{4\theta p} \norm{\Gamma(f)}_{L^p(\IP_\kappa)},
\]
where $\theta \coloneqq \sqrt{e}/(\sqrt{e}-1) < 2.5415$.
\end{lemma}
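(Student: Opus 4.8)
The plan is to derive the moment inequality from the Beckner inequalities established in Proposition \ref{Becknerms}, following the standard route by which Beckner (or "$\Phi$-Sobolev") inequalities upgrade to $L^p$-norm bounds on $f - \IE f$. The starting point is the observation that a Bec-$p$ inequality with constant $\beta_p$ bounded (uniformly in $p \in (1,2]$) controls the growth of $L^p$ norms. More precisely, writing $p = 2$ and iterating — or directly invoking the known equivalence between a family of Beckner inequalities and moment growth (see \cite{APS20}, and classically \cite{BT06}) — one obtains a differential inequality for $p \mapsto \norm{f-\IE f}_{L^p}$ whose integration yields the stated bound. Since by Proposition \ref{Becknerms} we have $\beta_p = \tfrac{4N}{p(N+2)} \le \tfrac{4}{p}$, the effective constant appearing after optimizing over the range $p \in (1,2]$ is what produces the factor $4\theta$ with $\theta = \sqrt e/(\sqrt e - 1)$.

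Concretely, I would proceed as follows. First, record that the Dirichlet form $\mathcal{E}_\kappa(f,f)$ relates to the difference operator via $\mathcal{E}_\kappa(f,f) = \IE_\kappa \Gamma(f)^2$, by the very definition of $\Gamma$. Second, for $g \ge 0$ and $r \in (1,2]$, the Bec-$r$ inequality $\IE_\kappa g^r - (\IE_\kappa g)^r \le \tfrac{\beta_r r}{2}\mathcal{E}_\kappa(g, g^{r-1})$ together with the elementary pointwise bound $(\Gamma_{ij}g)(\Gamma_{ij}g^{r-1}) \le C_r (\Gamma_{ij} g^{r/2})^2$ for a suitable constant $C_r$ (this is the discrete analogue of $\nabla g \cdot \nabla g^{r-1} \asymp |\nabla g^{r/2}|^2$; the sharp constant here is $\tfrac{4(r-1)}{r^2}$, or a convexity-type estimate sufficient for our purposes) lets one replace the right-hand side by a multiple of $\IE_\kappa \Gamma(g^{r/2})^2$. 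Third, apply this with $g = |f - \IE_\kappa f|^{2/r}$... — more cleanly, apply the tensorization/iteration scheme of \cite[Section 5]{APS20}: set $F(p) \coloneqq \norm{f - \IE_\kappa f}_{L^p}$ and show $F(p)$ satisfies $F(p) \le F(2) + c\int_2^p \tfrac{\norm{\Gamma(f)}_{L^s}}{\sqrt{s}}\,ds$ or a comparable recursion, from which $F(p) \le \sqrt{4\theta p}\,\norm{\Gamma(f)}_{L^p}$ follows by the monotonicity $s \mapsto \norm{\Gamma(f)}_{L^s}$ and an explicit evaluation of the resulting integral/series; the numeric value $\theta = \sqrt e/(\sqrt e - 1)$ arises from summing a geometric-type series with ratio $e^{-1/2}$, exactly as in the proof of the analogous statement in \cite{APS20} or \cite{GSS18}.

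The main obstacle I anticipate is bookkeeping the constants so that the clean value $4\theta$ emerges: one must be careful that the Beckner constant $\beta_p = \tfrac{4N}{p(N+2)}$ is used with its $p$-dependence (it is this $1/p$ decay that, after integration against $ds/s$, converts to the $\sqrt p$ in the conclusion rather than a worse power), and that the pointwise comparison between $\mathcal{E}_\kappa(g,g^{r-1})$ and $\IE_\kappa\Gamma(g^{r/2})^2$ is done with a constant that does not blow up as $r \downarrow 1$. A secondary point to handle with care is that Bec-$p$ as stated in \eqref{Beckner} is for nonnegative $f$, so to bound $\norm{f - \IE_\kappa f}_{L^p}$ for signed $f$ one passes through $|f - \IE_\kappa f|$ and uses $\Gamma(|h|) \le \Gamma(h)$ pointwise for the switch-difference operator (immediate from $||a|-|b|| \le |a-b|$), absorbing any loss into the constant $\theta$. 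Once these two estimates are pinned down, the integration is routine and gives precisely the claimed bound with $\theta = \sqrt e/(\sqrt e-1) < 2.5415$.
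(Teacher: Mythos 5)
Your proposal follows essentially the same route as the paper: the paper's proof is a direct citation of the Beckner inequalities from Proposition \ref{Becknerms} together with \cite[Proposition 3.3]{APS20}, verifying only that $\beta_p^{-1} = \tfrac{p(N+2)}{4N} \ge \tfrac14 (p-1)^0$ for $p \in (1,2]$, which is exactly the "constants bounded away from zero as $p \downarrow 1$" condition you correctly identify as the crux. Your sketch of the internal iteration (the geometric series with ratio $e^{-1/2}$ producing $\theta = \sqrt{e}/(\sqrt{e}-1)$) is just an unpacking of that cited result, so the two arguments coincide.
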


\begin{proof}
This follows immediately from Proposition \ref{Becknerms} and \cite[Proposition 3.3]{APS20}. To apply the latter result, we have to check that the the constants of the Beckner inequalities Bec-$p$ satisfy
\[
\beta_p^{-1} = \frac{p(N+2)}{4N} \ge a (p-1)^s
\]
for some $a > 0$, $s \ge 0$ and any $p \in (1,2]$. Clearly, we may take $a=1/4$ and $s=0$, which finishes the proof.
\end{proof}

Note that alternatively, we could apply \cite[Proposition 2.4]{GSS18}, using \eqref{alt&neu} and Proposition \ref{prop:LSImultislice}, which yields
\[
\lVert f - \IE_\kappa f \rVert_{L^p(\IP_\kappa)} \le \sqrt{\frac{8\log(N/\kappa_{\mathrm{min}})}{\log(2)}} \sqrt{p-1} \lVert \Gamma(f) \rVert_{L^p(\IP_\kappa)}.
\]
As a result of using the $\Gamma-\mathrm{LSI}$, we arrive at a substantially weaker constant, however.

Next, we have to relate differences of multilinear polynomials to (formal) derivatives, which is typically achived by an inequality of the form $\Gamma(f) \le c \abs{\nabla f}$ for some absolute constant $c > 0$. However, it comes out that such an inequality cannot be true in our setting. For instance, taking $N=3$, $\mathcal{X} = \{0,1\}$ and $f(\omega) = \omega_1\omega_2 - \omega_1\omega_3$, it is easy to check that for $\omega = (0,1,1)$, we have $0 = \abs{\nabla f(\omega)} < \Gamma (f)(\omega)$. The same problem arises if we take $\Gamma^+$ instead of $\Gamma$. It is possible to prove an inequality of this type with $c \coloneqq \abs{\mathcal{X}}$ for multilinear polynomials with non-negative coefficients and $\mathcal{X} \subset [0, \infty)$ (this can be seen by slightly modifying the proof of Proposition \ref{prop:do+der} below). However, the proof of Theorem \ref{multlinpol} also includes an iteration and linearization procedure, and if we only allow for non-negative coefficients we get stuck at $d=2$.

The following proposition provides us with the estimate we need to get the recursion going, at the cost of also involving second order derivatives.

\begin{proposition}\label{prop:do+der}
    Let $f = f(\omega)$ be a multilinear polynomial as in Theorem \ref{multlinpol}. Then we have
    \begin{equation}\label{eqn:gradest}
    \Gamma(f)^2 \le \frac{3\abs{\mathcal{X}}^2}{2} \abs{\nabla f}^2 + \frac{3\abs{\mathcal{X}}^4}{4N} \norm{\nabla^2 f}_\mathrm{HS}^2.
    \end{equation}
    In particular, for any $p \ge 2$ we have
    \begin{equation}\label{eqn:momest}
        \norm{f - \IE_{\kappa} f}_{L^p(\IP_\kappa)}
        \le \ \sqrt{6 \theta \abs{\mathcal{X}}^2 p} \norm{ \abs{\nabla f}}_{L^p(\IP_\kappa)} + \sqrt{3 \theta \abs{\mathcal{X}}^4 p/N} \norm{ \norm{\nabla^2 f}_\mathrm{HS}}_{L^p(\IP_\kappa)}
    \end{equation}
    with $\theta$ as in Lemma \ref{newmomest}.
\end{proposition}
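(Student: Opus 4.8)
The plan is to prove the pointwise bound \eqref{eqn:gradest} first, and then deduce the moment estimate \eqref{eqn:momest} by plugging it into Lemma~\ref{newmomest} together with the triangle inequality in $L^p$. Since $\abs{f(\omega)-f(\tau_{ij}\omega)}^2 \le \Gamma_{ij}^+(f)^2(\omega) + \Gamma_{ij}^+(f)^2(\tau_{ij}\omega)$ (the two positive parts together recover the full square after symmetrizing over the switch), it suffices to control each switched difference $f(\omega)-f(\tau_{ij}\omega)$ by first and second order derivatives. The key observation is that for a multilinear polynomial $f$, the map $t \mapsto f(\omega + t(\tau_{ij}\omega - \omega))$ is a polynomial of degree at most $2$ in $t$ (only the variables $\omega_i,\omega_j$ change, and $f$ is affine in each), so a second-order Taylor expansion is exact:
\[
f(\tau_{ij}\omega) - f(\omega) = \skal{\nabla f(\omega), \tau_{ij}\omega - \omega} + \tfrac12 \skal{\nabla^2 f(\omega)(\tau_{ij}\omega-\omega), \tau_{ij}\omega - \omega}.
\]
Now $\tau_{ij}\omega - \omega = (\omega_j-\omega_i)(e_i - e_j)$, so the first term equals $(\omega_j-\omega_i)(\partial_i f - \partial_j f)(\omega)$ and the second equals $-(\omega_j-\omega_i)^2 \partial_i\partial_j f(\omega)$ (the Hessian has vanishing diagonal since $f$ is multilinear). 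Both prefactors are bounded in absolute value by $\abs{\mathcal{X}}$ resp.\ $\abs{\mathcal{X}}^2$.

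Next I would square, use $(a+b)^2 \le \tfrac32 a^2 + 3b^2$ (or a similarly weighted Young inequality — the constants $\tfrac32$ and $3$ are exactly what appear in \eqref{eqn:gradest}), and average. Concretely,
\[
\Gamma(f)^2(\omega) = \frac{1}{2N}\sum_{i<j}(f(\omega)-f(\tau_{ij}\omega))^2 \le \frac{1}{2N}\sum_{i<j}\Big(\tfrac32 \abs{\mathcal{X}}^2(\partial_i f-\partial_j f)^2 + 3\abs{\mathcal{X}}^4 (\partial_i\partial_j f)^2\Big).
\]
For the first sum, expanding $(\partial_i f - \partial_j f)^2 \le 2\partial_i f^2 + 2\partial_j f^2$ and counting that each index appears in $N-1$ pairs gives $\frac{1}{2N}\cdot \tfrac32\abs{\mathcal{X}}^2 \cdot 2(N-1)\sum_i \partial_i f^2 \le \tfrac32 \abs{\mathcal{X}}^2 \abs{\nabla f}^2$. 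For the second sum, $\sum_{i<j}(\partial_i\partial_j f)^2 = \tfrac12\sum_{i\ne j}(\partial_i\partial_j f)^2 = \tfrac12\norm{\nabla^2 f}_\mathrm{HS}^2$ (again using vanishing diagonal), so $\frac{1}{2N}\cdot 3\abs{\mathcal{X}}^4 \cdot \tfrac12 \norm{\nabla^2 f}_\mathrm{HS}^2 = \tfrac{3\abs{\mathcal{X}}^4}{4N}\norm{\nabla^2 f}_\mathrm{HS}^2$. This yields \eqref{eqn:gradest} exactly.

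For \eqref{eqn:momest}, apply Lemma~\ref{newmomest} to get $\norm{f-\IE_\kappa f}_{L^p} \le \sqrt{4\theta p}\,\norm{\Gamma(f)}_{L^p}$, then use $\sqrt{a+b} \le \sqrt a + \sqrt b$ pointwise on \eqref{eqn:gradest} to write $\Gamma(f) \le \sqrt{\tfrac32}\abs{\mathcal{X}}\abs{\nabla f} + \sqrt{\tfrac{3}{4N}}\abs{\mathcal{X}}^2\norm{\nabla^2 f}_\mathrm{HS}$, and finish with the $L^p$ triangle inequality; the constants $\sqrt{6\theta\abs{\mathcal{X}}^2 p}$ and $\sqrt{3\theta\abs{\mathcal{X}}^4 p/N}$ then match after absorbing the $\sqrt{4\theta p}$. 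I do not anticipate a serious obstacle here: the one point requiring care is the exactness of the quadratic Taylor expansion, which hinges squarely on multilinearity (a general $f$ would leave a remainder term), and keeping track of the combinatorial factors $N-1$ and the diagonal-exclusion so the constants come out as stated rather than merely up to an absolute constant.
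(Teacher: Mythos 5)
Your proof is correct and follows essentially the same route as the paper's: both decompose $f(\omega)-f(\tau_{ij}\omega)$ into the first-order term $(\omega_j-\omega_i)(\partial_i f-\partial_j f)$ plus the second-order term $-(\omega_j-\omega_i)^2\partial_{ij}f$, bound the prefactors by $\abs{\mathcal{X}}$ and $\abs{\mathcal{X}}^2$, square via a weighted Young inequality, and sum over pairs, with the paper obtaining this identity by directly expanding the coefficients of the (homogeneous) polynomial while your exact second-order Taylor expansion along the switch is a slightly slicker derivation of the same identity yielding identical constants. The opening aside about $\Gamma^+_{ij}$ is unnecessary and unused, since $\Gamma$ is defined through the full difference $f(\omega)-f(\tau_{ij}\omega)$; everything else, including the passage from \eqref{eqn:gradest} to \eqref{eqn:momest} via Lemma~\ref{newmomest} and the triangle inequality, matches the paper.
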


\begin{proof}
    In the proof, we additionally assume $f$ to be $d$-homogeneous, i.\,e.\
    \[
    f(\omega) = \sum_{i_1 < \ldots < i_d} a_{i_1 \ldots i_d} \omega_{i_1} \cdots \omega_{i_d}.
    \]
    This is done in order to ease notation, and it is no problem to extend our proof to the non-homogeneous case. For notational convenience, for any $i_1 < \ldots < i_d$ and any permutation $\sigma \in S_d$, we define $a_{i_{\sigma(1)} \ldots i_{\sigma(d)}} \coloneqq a_{i_1 \ldots i_d}$, and we set $a_{i_1 \ldots i_d} = 0$ if $i_j = i_k$ for some $j \neq k$. Finally, note that some of the notation below has to be interpreted accordingly for small values of $d$, e.\,g.\ summation over $i_1 < \ldots < i_{d-1}$ reduces to summation over $i_1$ for $d=2$.
    Observe that for any $k, \ell \in \{1,\ldots,N\}, k \neq \ell,$ we have
    \begin{align*}
        &\Gamma_{k\ell}(f)(\omega)^2\\
        =\ &\big( \sum_{\substack{i_1 < \ldots < i_{d-1} \\ \ell \notin \{i_1, \ldots, i_{d-1} \}}} a_{i_1 \ldots i_{d-1} k} \omega_{i_1} \cdots \omega_{i_{d-1}} (\omega_k - \omega_\ell)\\
        &\hspace{2cm}+ \sum_{\substack{i_1 < \ldots < i_{d-1} \\ k \notin \{i_1, \ldots, i_{d-1} \}}} a_{i_1 \ldots i_{d-1} \ell} \omega_{i_1} \cdots \omega_{i_{d-1}} (\omega_\ell - \omega_k) \big)^2 \displaybreak[2]\\
        =\ &\big( \sum_{i_1 < \ldots < i_{d-1}} a_{i_1 \ldots i_{d-1} k} \omega_{i_1} \cdots \omega_{i_{d-1}} (\omega_k - \omega_\ell)
        + \sum_{i_1 < \ldots < i_{d-1}} a_{i_1 \ldots i_{d-1} \ell} \omega_{i_1} \cdots \omega_{i_{d-1}} (\omega_\ell - \omega_k)\\
        &\hspace{2cm}+ \sum_{i_1 < \ldots < i_{d-2}} a_{i_1 \ldots i_{d-2} k\ell} \omega_{i_1} \cdots \omega_{i_{d-2}} (\omega_k - \omega_\ell)^2\big)^2\displaybreak[2]\\
        \le\ &3\abs{\mathcal{X}}^2 \Big(\big( \sum_{i_1 < \ldots < i_{d-1}} a_{i_1 \ldots i_{d-1} k} \omega_{i_1} \cdots \omega_{i_{d-1}} \big)^2
        + \big( \sum_{i_1 < \ldots < i_{d-1}} a_{i_1 \ldots i_{d-1} \ell} \omega_{i_1} \cdots \omega_{i_{d-1}} \big)^2\Big) \\
        &\hspace{2cm}+ 3 \abs{\mathcal{X}}^4 \big(\sum_{i_1 < \ldots < i_{d-2}} a_{i_1 \ldots i_{d-2} k\ell} \omega_{i_1} \cdots \omega_{i_{d-2}}\big)^2\\
        =\ &3\abs{\mathcal{X}}^2 (\partial_k f(\omega)^2 + \partial_\ell f(\omega)^2) + 3\abs{\mathcal{X}}^4 \partial_{k\ell} f(\omega)^2.
    \end{align*}
    Consequently it holds
    \begin{align*}
        \Gamma(f)^2 &= \frac{1}{4N} \sum_{k \neq \ell} \Gamma_{k\ell}(f)^2 \le \frac{3\abs{\mathcal{X}}^2}{4N} \sum_{k\neq \ell} ((\partial_k f)^2 + (\partial_\ell f)^2) + \frac{3\abs{\mathcal{X}}^4}{4N} \sum_{k\neq \ell} (\partial_{kl} f)^2\\
        &\le \frac{3\abs{\mathcal{X}}^2}{2} \abs{\nabla f}^2 + \frac{3\abs{\mathcal{X}}^4}{4N} \norm{\nabla^2 f}_\mathrm{HS}^2,
    \end{align*}
    proving equation \eqref{eqn:gradest}. Finally, combining \eqref{eqn:gradest} with Lemma \ref{newmomest}, we immediately arrive at \eqref{eqn:momest}.
\end{proof}

With the help of Proposition \ref{prop:do+der}, we may now prove Theorem \ref{multlinpol}. To this end, let us introduce some additional notation.
If $A = (a_{i_1 \ldots i_k})_{i_1, \ldots, i_k \le N}$, $B = (b_{i_1 \ldots i_k})_{i_1, \ldots, i_k \le N}$ are two $k$-tensors, we define an inner product $\langle \cdot, \cdot \rangle$ by
\[
\langle A, B \rangle \coloneqq \sum_{i_1, \ldots, i_k \le N} a_{i_1 \ldots i_k} b_{i_1 \ldots i_k}.
\]
Moreover, if $x^j = (x^j_1, \ldots, x^j_N)$, $j = 1, \ldots, k$, are any vectors, we set $x^1 \otimes \cdots \otimes x^k \coloneqq (x^1_{i_1} \cdots x^k_{i_k})_{i_1, \ldots, i_k \le N}$. We also extend this notation to the situation in which some of these vectors may be $N^2$-dimensional. Indeed, let $x^1, \ldots, x^k$ be $N$-dimensional vectors as above, and let $y^1, \ldots, y^\ell$ be $N^2$-dimensional, $y^j = (y^j_{\nu_1, \nu_2})_{\nu_1, \nu_2 \le N}$. In this case, we set
\[
x^1 \otimes \cdots \otimes x^k \otimes y^1 \otimes \cdots \otimes y^\ell \coloneqq (x_{i_1}^1 \cdots x_{i_k}^k y_{i_{k+1},i_{k+2}}^1 \cdots y_{i_{k+2\ell-1},i_{k+2\ell}}^\ell)_{i_1, \ldots, i_{k+2\ell}\le N},
\]
which we regard as a rectangular $(k + \ell)$-tensor whose first $k$ components are $N$-dimensional and whose last $\ell$ components are $N^2$-dimensional.

\begin{proof}[Proof of Theorem \ref{multlinpol}]
To ease notation, we assume $\abs{\mathcal{X}}=1$ in the sequel. The general case follows in the same way with only minor changes. Recall the fact that for a standard Gaussian $g$ in $\IR^k$ for some $k \in \IN$ and $x \in \IR^k$ we have $\sqrt{p}M^{-1}\abs{x} \le \lVert \langle x, g \rangle \rVert_{L^p} \le M \sqrt{p} \abs{x}$ for all $p \ge 1$ and some universal constant $M > 1$. Combining this and equation \eqref{eqn:momest} we arrive at
\begin{equation}\label{eqn:momestrewr}
    \norm{f - \IE_{\kappa} f}_{L^p(\IP_\kappa)}
        \le K( \norm{\langle \nabla f, G \rangle}_{L^p} + (2N)^{-1/2} \norm{\langle \nabla^2 f, H \rangle}_{L^p}),
\end{equation}
for $K \coloneqq \sqrt{6\theta} M$. Here, $G$ is an $N$-dimensional standard Gaussian and $H$ is an $N^2$-dimensional standard Gaussian such that $G$ and $H$ are independent of each other and of the $\omega_i$, and the $L^p$ norms on the right-hand side are taken with respect to the product measure of $\IP_\kappa$ and the Gaussians.

Note that $\langle \nabla f, G \rangle$ and $\langle \nabla^2 f, H \rangle$ are again multilinear polynomials in the $\omega_i$. Moreover, $\langle \nabla \langle \nabla f, G_1 \rangle, G_2 \rangle = \langle \nabla^2 f, G_1 \otimes G_2 \rangle$ and $\langle \nabla^2 \langle \nabla f, G \rangle, H \rangle = \langle \nabla^3 f, G \otimes H \rangle$. In the last expression, we regard $\nabla^3f$ as a $2$-tensor whose second component is $N^2$-dimensional. Similar relations also hold for the other terms in \eqref{eqn:momestrewr}.

The proof now follows by iterating \eqref{eqn:momestrewr}. For simplicity of presentation, let us consider the case of $d=2$ first. Here, we apply the triangle inequality (in the form $\norm{\langle \nabla f, G \rangle}_{L^p} \le \norm{\langle \IE_\kappa \nabla f, G \rangle}_{L^p} + \norm{\langle \nabla f - \IE_\kappa \nabla f, G \rangle}_{L^p}$ and similarly for $\langle \nabla^2 f, H \rangle$) to \eqref{eqn:momestrewr}. We may then apply \eqref{eqn:momestrewr} to $\langle \nabla f - \IE_\kappa \nabla f, G \rangle$ and $\langle \nabla^2 f - \IE_\kappa \nabla^2 f, H \rangle$ again. This leads to
\begin{align}\label{eqn:akps4.1d=2}
\begin{split}
    &\norm{f - \IE_{\kappa} f}_{L^p(\IP_\kappa)}\\
    \le \ &K \norm{\langle \IE_\kappa \nabla f, G \rangle}_{L^p} + K (2N)^{-1/2} \norm{\langle \IE_\kappa \nabla^2 f, H \rangle}_{L^p}\\
    &+ K^2 \norm{\langle \nabla^2 f, G_1 \otimes G_2 \rangle}_{L^p} + 2 K^2 (2N)^{-1/2} \norm{\langle \IE_\kappa \nabla^3 f, G \otimes H \rangle}_{L^p}\\
    &+ K^2 (2N)^{-1}\norm{\langle \IE_\kappa \nabla^4 f, H_1 \otimes H_2 \rangle}_{L^p}\\
    = \ &K \norm{\langle \IE_\kappa \nabla f, G \rangle}_{L^p} + K (2N)^{-1/2} \norm{\langle \IE_\kappa \nabla^2 f, H \rangle}_{L^p}\\
    &+ K^2 \norm{\langle \IE_\kappa \nabla^2 f, G_1 \otimes G_2 \rangle}_{L^p}
    \end{split}
\end{align}
In the last step, we have used that since $f$ is a multilinear polynomial of degree $2$, its second order derivatives are constant and all derivatives of order larger than $2$ vanish.

Next we use that by \cite{La06}, there are constants $C_k$ depending on $k$ only such that for any (possibly rectangular) $k$-tensor $A$ and any $p \ge 2$,
\begin{equation}\label{eqn:Latala}
    \norm{\langle A, g_1 \otimes \cdots \otimes g_k \rangle}_{L^p} \le C_k \sum_{\mathcal{I} \in P_k} p^{\abs{\mathcal{I}}/2} \norm{A}_\mathcal{I},
\end{equation}
where $g_1, \ldots, g_k$ are standard Gaussians. Applying \eqref{eqn:Latala} to \eqref{eqn:akps4.1d=2}, we obtain for some absolute constant $C$
\begin{align*}
    &\norm{f - \IE_{\kappa} f}_{L^p(\IP_\kappa)}\\
    \le \ &K \norm{\langle \IE_\kappa \nabla f, G \rangle}_{L^p} + K (2N)^{-1/2} \norm{\langle \IE_\kappa \nabla^2 f, H \rangle}_{L^p} + K^2 \norm{\langle \IE_\kappa \nabla^2 f, G_1 \otimes G_2 \rangle}_{L^p}\\
    \le \ &C_1K p^{1/2} \abs{\IE_\kappa \nabla f} + C_1K (2N)^{-1/2} p^{1/2} \norm{\IE_\kappa \nabla^2 f}_\mathrm{HS} + C_2 K^2 p^{1/2} \norm{\IE_\kappa \nabla^2 f}_\mathrm{HS}\\
    &+ C_2 K^2 p \norm{\IE_\kappa \nabla^2 f}_\mathrm{op}\\
    \le \ &C (p^{1/2} \abs{\IE_\kappa \nabla f} + p^{1/2} \norm{\IE_\kappa \nabla^2 f}_\mathrm{HS} + p \norm{\IE_\kappa \nabla^2 f}_\mathrm{op}).
\end{align*}
From here, the assertion follows by standard arguments, cf.\ e.\,g.\ \cite[Proposition 4]{GSS18b}.

Finally, we consider an arbitrary $d \ge 2$ and explain how the proof given above generalizes. First, we apply the triangle inequality to \eqref{eqn:momestrewr} and iterate $d-1$ times. This yields
\begin{equation}\label{eqn:akps4.1}
    \norm{f - \IE_{\kappa} f}_{L^p(\IP_\kappa)}
        \le \psi_d + \sum_{i=1}^{d-1} \psi_i,
\end{equation}
where we have
\begin{align}\label{eqn:akps4.1expl}
    \begin{split}
        \psi_d \coloneqq \sum_{\ell=0}^d \binom{d}{\ell} K^d (2N)^{-\ell/2} \norm{\langle \nabla^{d+\ell}f, G_1 \otimes \cdots \otimes G_{d-\ell} \otimes H_1 \otimes \cdots \otimes H_\ell \rangle}_{L^p},\\
        \psi_i \coloneqq \sum_{\ell=0}^i \binom{i}{\ell} K^i(2N)^{-\ell/2} \norm{\langle \IE_\kappa \nabla^{i+\ell}f, G_1 \otimes \cdots \otimes G_{i-\ell} \otimes H_1 \otimes \cdots \otimes H_\ell \rangle}_{L^p}
    \end{split}
\end{align}
for any $i=1, \ldots, d-1$. As $f$ is a multilinear polynomial of degree $d$, these expressions simplify since the derivatives of order $d$ are constant and all derivatives of higher order vanish. In particular,
\[
\psi_d = K^d \norm{\langle \IE_\kappa \nabla^df, G_1 \otimes \cdots \otimes G_d \rangle}_{L^p}.
\]
Now, as above we apply \eqref{eqn:Latala} to \eqref{eqn:akps4.1} (or rather the $L^p$ norms appearing in \eqref{eqn:akps4.1expl}) to arrive at
\[
\norm{f - \IE_{\kappa} f}_{L^p(\IP_\kappa)} \le C \sum_{k=1}^d \sum_{\mathcal{I} \in P_k} p^{\abs{\mathcal{I}}/2} \norm{\IE_\kappa \nabla^k f}_\mathcal{I}
\]
for some absolute constant $C> 0$ depending on $d$ only. In particular, we use that if we apply \eqref{eqn:Latala} to some $\ell \ge 1$ term in $\psi_i$ in \eqref{eqn:akps4.1expl}, the norms which arise reappear in the norms corresponding to $\ell = 0$ in the $\psi_{i+\ell}$ terms. The proof is concluded by recalling \cite[Proposition 4]{GSS18b} again.
\end{proof}

\begin{proof}[Proof of Proposition \ref{prop:ERDr}]
The proof works by calculating $\norm{\IE_\kappa \nabla^k f}_\mathcal{I}$ for $k = 1, 2, 3$ and applying Theorem \ref{multlinpol}. In the sequel, we use the convention $\omega_{ji} \coloneqq \omega_{ij}$ whenever $j > i$. It is easy to see that for any edge $e = \{i,j\}$, we have
\[
\frac{\partial}{\partial \omega_e} f(\omega) = \sum_{k \in \{1, \ldots, n\}\setminus \{i,j\}} \omega_{ik}\omega_{jk}.
\]
Moreover, the second order derivatives $\partial^2 f/(\partial \omega_{e_1}\partial \omega_{e_2})$ are zero unless $e_1$ and $e_2$ share exactly one vertex, in which case it is $\omega_{ij}$ if $i$ and $j$ are the two vertices distinct from the common one. Finally, the third order derivatives $\partial^3 f/(\partial \omega_{e_1}\partial \omega_{e_2} \partial \omega_{e_3})$ are $1$ if $e_1, e_2, e_3$ form a triangle and zero if not.

Using that
\[
    \IE \omega_{e_1} \cdots \omega_{e_k} = \frac{M(M-1) \cdots (M-k+1)}{N(N-1) \cdots (N-k+1)},
\]
for any $k = 1, \ldots, N$ and any pairwise distinct set of edges $e_1, \ldots, e_k$, we therefore obtain 
\[
    \norm{\IE \nabla f}_{\{1\}} = \sqrt{N} (n-2) \frac{M(M-1)}{N(N-1)} \le n^2 p^2.
\]
Moreover, we have $\IE \nabla^2 f = p (\eins_{\abs{e_1 \cap e_2} = 1})_{e_1,e_2}$, where $\abs{e_1 \cap e_2}$ denotes the number of common vertices of $e_1$ and $e_2$. Therefore, we may use the calculations from the proof of \cite[Proposition 5.5]{AW15}, which yield
\begin{gather*}
    \norm{\IE \nabla^2 f}_{\{1,2\}} \le p n^{3/2},\qquad \norm{\IE \nabla^2 f}_{\{1\},\{
    2\}} \le 2 p n,\\
    \norm{\IE \nabla^3 f}_{\{1,2,3\}} \le n^{3/2},\qquad \norm{\IE \nabla^3 f}_{\{1\},\{2\},\{3\}} \le 2^{3/2},\\
    \norm{\IE \nabla^3 f}_{\{1,2\},\{3\}} = \norm{\IE \nabla^3 f}_{\{1,3\},\{2\}} = \norm{\IE \nabla^3 f}_{\{2,3\},\{1\}} \le \sqrt{2n}.
\end{gather*}
The proof now follows by plugging in.
\end{proof}

The results of Section \ref{sect:swor} follow from the logarithmic Sobolev inequalities established in Section \ref{sect:LSIs} by standard means.

\begin{proof}[Proof of Proposition \ref{theorem:subgtswor}]
    Noting that
    \[
        \mathfrak{h}(f) = \Big( \frac{1}{2} \sum_{i = 1}^n \big(\sup_{\omega_1} f(\omega) - \inf_{\omega_1'} f(\omega_1', \omega_2, \ldots, \omega_n)\big)^2 \Big)^{1/2} \le \Big( \frac{c^2n}{2} \Big)^{1/2},
    \]
    \eqref{eqn:swrboundeddiff1} follows from Proposition \ref{prop:LSIswor} using the arguments from the proof of Proposition \ref{elementaryresults}.
    Now, to prove \eqref{eqn:bdasymmetric} define the symmetric function $g(\omega) \coloneqq \frac{1}{n!} \sum_{\sigma \in S_n} f(\sigma \omega)$, and observe that by exchangeability of the $\omega_i$ we have $\IE_{\kappa,n} f = \IE_{\kappa,n} g$. Moreover,
    \begin{align*}
        \abs{g(\omega) - g(\omega_1', \omega_2, \ldots, \omega_n)} &\le \frac{1}{n!} \sum_{\sigma \in S_n} \abs{f(\sigma \omega) - f(\sigma (\omega_1', \omega_2, \ldots, \omega_n))} \\
        &\le \frac{1}{n!} \sum_{\sigma \in S_n} \sum_{i =1}^n \eins_{\sigma(1) = i} c_i
        \le \frac{1}{n} \sum_{i = 1}^n c_i.
    \end{align*}
    Applying equation \eqref{eqn:swrboundeddiff1} to $g$ and using Jensen's inequality yields
    \[
        \IP_{\kappa,n}(g - \IE_{\kappa,n} g \ge t) \le \exp\Big( - \frac{t^2}{4(1-n/N)n (n^{-1} \sum_i c_i)^2} \Big) \le \exp\Big( - \frac{t^2}{4(1-n/N) \sum_i c_i^2} \Big)
    \]
    as claimed.
\end{proof}

\begin{proof}[Proof of Corollary \ref{corollary:Serfling}]
    This follows immediately from Proposition \ref{theorem:subgtswor}, as $f(\omega) = n^{-1} \sum_{i = 1}^n \omega_i$ is a symmetric function satisfying $\abs{f(\omega) - f(\omega_1', \omega_2, \ldots, \omega_n)} \le \abs{\mathcal{X}}/n$.
\end{proof}

\begin{proof}[Proof of Corollary \ref{corollary:subgconj}]
This is a consequence of Proposition \ref{theorem:subgtswor}, as for any $\omega \in \Omega_{\kappa,n}$ and $\omega_1'$ we have by the reverse triangle inequality
\begin{align*}
    \abs{f(\omega) - f(\omega_1', \omega_2, \ldots, \omega_n)} \le n^{-1} \sup_{t \in \IR} \abs{\eins_{(-\infty, t]}(\omega_1) - \eins_{(-\infty,t]}(\omega_1')} \le n^{-1}.
\end{align*}
\end{proof}

To prove Talagrand's convex distance inequality on the multislice, we follow the approach by Boucheron, Lugosi and Massart \cite{BLM03}, see also \cite[Proposition 1.9]{SS19}. A key step in the proof is the following lemma.

\begin{lemma}\label{lemfTcdeMS}
Let $f: \Omega_\kappa \to \IR$ be a non-negative function such that
\begin{enumerate}
    \item $\Gamma^+(f)^2 \le f$,
    \item $\abs{f(\omega) - f(\tau_{ij}\omega)} \le 1$ for all $\omega, i,j$.
\end{enumerate}
Then for all $t \in [0, \IE_\kappa f]$ we have
\[
\IP_\kappa(\IE_\kappa f - f \ge t) \le \exp\Big( -\frac{t^2}{32\IE_\kappa f} \Big).
\]
Especially we have
\[
    \IP_\kappa(f = 0)\exp\Big( \frac{\IE_\kappa f}{32} \Big) \le 1.
\]
In particular, this holds for $f(\omega) = \frac{1}{4} d_T(\omega, A)^2$, where $A \subset S_n$ is any set.
\end{lemma}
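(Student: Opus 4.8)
The plan is to prove the three assertions in sequence, the first being the substantive one and the remaining two being easy consequences.

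\textbf{Step 1: The deviation bound via the entropy method.} I would apply the $\Gamma^+\mathrm{-mLSI}(8)$ from Proposition~\ref{prop:LSImultislice} to the function $-\lambda f$ for $\lambda \geq 0$, giving $\mathrm{Ent}_{\IP_\kappa}(e^{-\lambda f}) \leq 4 \IE_\kappa \Gamma^+(-\lambda f)^2 e^{-\lambda f} = 4\lambda^2 \IE_\kappa \Gamma^-(f)^2 e^{-\lambda f}$, where $\Gamma^-(f)^2(\omega) = \frac{1}{4N}\sum_{i\ne j}(f(\omega)-f(\tau_{ij}\omega))_-^2$. Since $\Gamma^+$ applied to $-f$ picks up the negative parts of the increments of $f$, and for $t \le \IE_\kappa f$ one only controls the lower tail, I need a bound on $\Gamma^-(f)^2$. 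Here I would use hypothesis (ii): $(f(\omega)-f(\tau_{ij}\omega))_-^2 \le |f(\omega)-f(\tau_{ij}\omega)| \cdot (f(\omega)-f(\tau_{ij}\omega))_- \le (f(\tau_{ij}\omega)-f(\omega))_+$. Averaging, this should give $\Gamma^-(f)^2(\omega) \le$ (something comparable to) $\Gamma^+(f)^2(\tau\text{-averaged})$, and then hypothesis (i) $\Gamma^+(f)^2 \le f$ — after a symmetrization using that $\omega \mapsto \tau_{ij}\omega$ is measure-preserving — yields $\IE_\kappa \Gamma^-(f)^2 e^{-\lambda f} \le C\, \IE_\kappa f e^{-\lambda f}$ for a suitable constant, possibly with an extra $e^{\lambda}$ factor from the increment bound that gets absorbed for $\lambda \le 1$ (the relevant range since $t \le \IE_\kappa f$). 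Feeding this into the Herbst argument for the lower tail, with $G(\lambda) = \IE_\kappa e^{-\lambda(f - \IE_\kappa f)}$, gives a differential inequality whose integration produces $\IP_\kappa(\IE_\kappa f - f \ge t) \le \exp(-t^2/(32\,\IE_\kappa f))$ after optimizing $\lambda$; tracking constants carefully is what fixes the $32$.

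\textbf{Step 2: The $\{f=0\}$ estimate.} Taking $t = \IE_\kappa f$ (the boundary of the allowed range) in the deviation bound gives $\IP_\kappa(f \le 0) \le \exp(-\IE_\kappa f/32)$, and since $f \ge 0$ the event $\{f \le 0\}$ equals $\{f = 0\}$. Rearranging yields $\IP_\kappa(f=0)\exp(\IE_\kappa f/32) \le 1$.

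\textbf{Step 3: Application to $f = \tfrac14 d_T(\cdot,A)^2$.} I must verify hypotheses (i) and (ii) for this choice. Hypothesis (i), $\Gamma^+(f)^2 \le f$, is the standard self-bounding property of the (squared, scaled) convex distance: one uses the minimax/duality characterization of $d_T(\omega,A)$ as $\sup$ over probability vectors of a concave game (Talagrand), picks the optimizing weight vector $\alpha$ at $\omega$, and shows that switching two coordinates changes $d_T$ by a controlled amount, the increments being "localized" to the switched coordinates so that the sum over $i\ne j$ telescopes against $d_T(\omega,A)^2$ itself — this is exactly the computation in \cite{BLM03} and its permutation-analogue in \cite[Proposition~1.9]{SS19}, adapted to the switch operator $\tau_{ij}$ on $\Omega_\kappa$. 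Hypothesis (ii), $|d_T(\omega,A)^2 - d_T(\tau_{ij}\omega,A)^2| \le 4$, follows because a single switch moves at most two coordinates, so $|d_T(\omega,A) - d_T(\tau_{ij}\omega,A)| \le \sqrt{2}$ (the $\ell^2$-normalization of $\alpha$), and combined with $d_T \le \sqrt{2N}$-type crude bounds one gets the factor $4$; more carefully $|a^2-b^2| = |a-b||a+b|$ with $|a-b|\le\sqrt2$ needs also that the increment is the \emph{product} bounded by $1$ after the $\tfrac14$ scaling, which is where the constant is tuned.

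\textbf{Main obstacle.} The crux is Step~1, specifically extracting a clean bound on $\IE_\kappa \Gamma^-(f)^2 e^{-\lambda f}$ from the \emph{one-sided} self-bounding hypothesis (i), which is phrased via $\Gamma^+$ (positive parts), whereas the lower-tail Herbst argument naturally produces negative parts. The elementary inequality $(x)_-^2 \le |x|\,(x)_-$ together with a reindexing symmetrization $\omega \leftrightarrow \tau_{ij}\omega$ should bridge this, but one must be careful that the symmetrization does not destroy the $e^{-\lambda f}$ weight — controlling $e^{-\lambda f(\tau_{ij}\omega)}$ versus $e^{-\lambda f(\omega)}$ via hypothesis (ii) is where the argument could leak constants or require restricting to $\lambda \in [0,1]$, which is harmless here. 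Verifying hypothesis (i) for the convex distance (inside Step~3) is conceptually the deepest input but is a known computation I would cite and adapt rather than redo from scratch.
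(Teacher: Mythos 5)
Your overall architecture --- a lower-tail Herbst argument driven by the self-bounding hypothesis, the choice $t=\IE_\kappa f$ for the $\{f=0\}$ bound, and verification of the two hypotheses for $\tfrac14 d_T(\cdot,A)^2$ via the minimax representation --- is the same as the paper's, but two of your concrete steps do not close as written.

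In Step 1, the chain $(f(\omega)-f(\tau_{ij}\omega))_-^2 \le \abs{f(\omega)-f(\tau_{ij}\omega)}(f(\omega)-f(\tau_{ij}\omega))_- \le (f(\tau_{ij}\omega)-f(\omega))_+$ throws away a power: after summing you are left with $\frac{1}{4N}\sum_{i\ne j}(f(\tau_{ij}\omega)-f(\omega))_+$, a sum of \emph{first} powers of increments, whereas hypothesis (i) only controls the sum of \emph{squared} positive parts. Since the increments are bounded by $1$, the first-power sum dominates the squared sum (by a factor as large as the reciprocal of the typical increment), so it is not ``comparable to $\Gamma^+(f)^2$'' and cannot be bounded by $f$. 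The repair is to keep the square and symmetrize directly: since $\omega\mapsto\tau_{ij}\omega$ is measure-preserving, $\IE_\kappa\sum_{i\ne j}(f(\tau_{ij}\omega)-f(\omega))_+^2\,e^{-\lambda f(\omega)}=\IE_\kappa\sum_{i\ne j}(f(\omega)-f(\tau_{ij}\omega))_+^2\,e^{-\lambda f(\tau_{ij}\omega)}\le 4N e^{\lambda}\,\IE_\kappa \Gamma^+(f)^2 e^{-\lambda f}$, using (ii) only to control the weight; this does yield the stated constant (indeed $e^\lambda-1\le(e-1)\lambda$ on $[0,1]$ gives slightly better than $32$). The paper avoids the reindexing altogether by starting from Salez's symmetric two-point entropy bound \eqref{LSInachSal}, in which the positive part is automatically paired with the smaller weight $e^{-\lambda f(\omega)}$, and then applying $e^x-1\le 2x$ on $[0,1]$.

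In Step 3, your verification of hypothesis (ii) for $f=\tfrac14 d_T(\cdot,A)^2$ fails: from $\abs{a-b}\le\sqrt2$ and $\abs{a^2-b^2}=\abs{a-b}\,\abs{a+b}$ you cannot extract a bound independent of $N$, because $d_T(\cdot,A)$ can be as large as $\sqrt N$, so $\abs{a+b}$ is not $O(1)$ and no tuning of the $\tfrac14$ rescues the product. The correct argument (used in the paper, following \cite{BLM09}) applies Cauchy--Schwarz inside the minimax representation to write $f(\omega)=\tfrac14\inf_{\nu}\sum_{k=1}^N\nu(\omega'\colon\omega_k'\ne\omega_k)^2$, the infimum running over probability measures $\nu$ on $A$; a switch $\tau_{ij}$ changes only the two summands $k=i,j$, each lying in $[0,1]$, giving $\abs{f(\omega)-f(\tau_{ij}\omega)}\le\tfrac12$. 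Your Step 2 and your verification of hypothesis (i) for $d_T$ are fine and match the paper.
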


We defer the proof of Lemma \ref{lemfTcdeMS} until the end of the section and first show how to apply it to prove Talagrand's convex distance inequality.

\begin{proof}[Proof of Proposition \ref{TcdeMS}]
The difference operator $\Gamma^+$ clearly satisfies $\Gamma^+(g^2) \le 2g \Gamma^+(g)$ for all positive functions $g$, as well as a $\Gamma^+-\mathrm{mLSI}(8)$. Moreover, as seen in the proof of Lemma \ref{lemfTcdeMS}, we have $\Gamma^+(d_T(\cdot, A)) \le 1$. Thus, by \cite[(3.6)]{SS19} it holds for $\lambda \in [0,1/16)$
\[
    \IP_\kappa(A) \IE_\kappa \exp\Big( \lambda d_T(\cdot,A)^2 \Big) \le \IP_\kappa(A) \exp\Big( \frac{\lambda}{1-16\lambda} \IE_\kappa d_T(\cdot,A)^2 \Big).
\]
Furthermore, Lemma \ref{lemfTcdeMS} shows that
\[
    \IP_\kappa(A) \exp\Big( \frac{\IE_\kappa d_T(\cdot,A)^2}{128} \Big) \le 1.
\]
So, for $\lambda = 1/144$ we have
\[
\IP_\kappa(A) \IE_\kappa \exp\Big( \frac{d_T(\cdot,A)^2}{144} \Big) \le \IP_\kappa(A) \exp\Big( \frac{1}{128} \IE_\kappa d_T(\cdot,A)^2 \Big) \le 1.
\]
\end{proof}

\begin{proof}[Proofs of Corollaries \ref{cor1} and \ref{corollary:convSymmLip}]
These corollaries follow in exactly the same way as the proof of \cite[Theorem 3]{Tal88}. The only difference is to note that for any $x, y \in \{ f \le \mathrm{med}(f) \}$ such that $f(x) \ge \mathrm{med}(f) + t$ we have
\[
t \le \mathrm{med}(f) + t - f(y) \le f(x) - f(y) \le L \abs{x-y} \le L \abs{\mathcal{X}} \sup_{\alpha \in \IR^n : \abs{\alpha = 1}} \sum_{i= 1}^n \alpha_i \eins_{x_i \neq y_i},
\]
so that
\[
f(x) \ge \mathrm{med}(f) + t \Rightarrow d_T(x, A) \ge t/(\abs{\mathcal{X}} L).
\]
\end{proof}

\begin{proof}[Proof of Corollary \ref{cor:largestEV}]
Since $\lambda_\mathrm{max} = \norm{X}_\mathrm{op}$, it is clear by triangular inequality that $\lambda_\mathrm{max}$ is a convex function of the $X_{ij}$, $i \le j$. Moreover, due to Lidskii's inequality, $\lambda_\mathrm{max}$ is $1$-Lipschitz. It therefore remains to apply Corollary \ref{cor1}.
\end{proof}

\begin{proof}[Proof of Lemma \ref{lemfTcdeMS}]
Rewriting \cite[Lemma 1]{Sal20}, for any positive function $g$ it holds
\begin{align*}
\mathrm{Ent}_\kappa(g)
&\le \frac{1}{2N} \sum_{i,j} \IE_\kappa (g(\tau_{ij}\omega) - g(\omega))(\log g(\tau_{ij}\omega) - \log g(\omega)) \\
&= \frac{1}{N} \sum_{i,j} \IE_\kappa (g(\tau_{ij}\omega) - g(\omega)) (\log g(\tau_{ij}\omega) - \log g(\omega))_+.
\end{align*}
Using this, we obtain for any $\lambda \in [0,1]$
\begin{align*}
    \mathrm{Ent}_\kappa(e^{-\lambda f}) &\le \frac{\lambda}{N} \IE_\kappa \sum_{i,j} (f(\omega) - f(\tau_{ij}\omega))_+ \big( \exp(-\lambda f(\tau_{ij}\omega)) - \exp(-\lambda f(\omega)) \big) \\
    &= \frac{\lambda}{N} \IE_\kappa \sum_{i,j} (f(\omega) - f(\tau_{ij} \omega))_+ (\exp(\lambda(f(\omega) - f(\tau_{ij}\omega)))-1) e^{-\lambda f(\omega)} \\
    &\le \frac{\lambda}{N} \IE_\kappa \sum_{i,j} (f(\omega) - f(\tau_{ij}\omega))_+ \Psi(\lambda(f(\omega) - f(\tau_{ij}\omega))) e^{-\lambda f(\omega)},
\end{align*}
where $\Psi(x) \coloneqq e^x - 1$. By a Taylor expansion it can easily be seen that $\Psi(x) \le 2x$ for all $x \in [0,1]$, so that (recall that by $(2)$ we have $f(\omega) - f(\tau_{ij}\omega) \le 1$, and $f(\omega) - f(\tau_{ij}\omega) \ge 0$ due to the positive part)
\begin{align*}
\mathrm{Ent}_\kappa(e^{-\lambda f}) &\le \frac{2\lambda^2}{N}  \IE_\kappa \sum_{i,j} (f(\omega) - f(\tau_{ij}\omega))_+^2 e^{-\lambda f(\omega)}\\ &= 8\lambda^2 \IE_\kappa \Gamma^+(f)^2 e^{-\lambda f} \le 8\lambda^2 \IE_\kappa f e^{-\lambda f}.
\end{align*}
The covariance of $fe^{-\lambda f}$ is non-positive (i.\,e.\ $\IE f e^{-\lambda f} \le \IE f \IE e^{-\lambda f}$), which yields
\[
\mathrm{Ent}_\kappa(e^{-\lambda f}) \le 8\lambda^2 \IE_\kappa f \IE_\kappa e^{-\lambda f}.
\]
In other terms, if we set $h(\lambda) \coloneqq \IE_\kappa e^{-\lambda f}$, we have
\[
\Big( \frac{\log h(\lambda)}{\lambda} \Big)' \le 8\IE_\kappa f,
\]
which by the fundamental theorem of calculus implies for all $\lambda \in [0,1]$
\[
\IE_\kappa \exp\Big( \lambda(\IE_\kappa f - f) \Big) \le \exp\Big( 8\lambda^2 \IE_\kappa f \Big).
\]
So, for any $t \in [0, \IE_\kappa f]$, by Markov's inequality and setting $\lambda = \frac{t}{16 \IE_\kappa f}$
\[
\IP_\kappa(\IE_\kappa f - f \ge t) \le \exp\Big(-\lambda t + 8\lambda^2 \IE_\kappa f\Big) = \exp\Big( - \frac{t^2}{32\IE_\kappa f} \Big).
\]
The second part follows by nonnegativity and $t = \IE_\kappa f$.

It remains to check that $f(\omega) = \frac{1}{4} d_T(\omega, A)^2$ satisfies the two conditions of this lemma. To this end, we first show that $\Gamma^+(d_T(\cdot,A))^2 \le 1$. Writing $g(\omega) \coloneqq d_T(\omega, A)$, it is well known (see \cite{BLM03}) that by Sion's minimax theorem, we have
\begin{equation}\label{Sion}
g(\omega) = \inf_{\nu \in \mathcal{M}(A)} \sup_{\alpha \in \IR^N : \abs{\alpha} = 1} \sum_{k = 1}^N \alpha_k \nu(\omega' : \omega'_k \neq \omega_k),
\end{equation}
where $\mathcal{M}(A)$ is the set of all probability measures on $A$. To estimate $\Gamma^+(g)^2(\omega)$, one has to compare $g(\omega)$ and $g(\tau_{ij}\omega)$. To this end, for any $\omega \in \Omega_\kappa$ fixed, let $\tilde{\alpha}, \tilde{\nu}$ be parameters for which the value $g(\omega)$ is attained, and let $\hat{\nu} = \hat{\nu}_{ij}$ be a minimizer of $\inf_{\nu \in \mathcal{M}(A)} \sum_{k = 1}^N \tilde{\alpha}_k \nu(\omega' : \omega'_k \neq (\tau_{ij}\omega)_k)$. This leads to
\begin{align*}
    \Gamma^+ (g)(\omega)^2 &\le \frac{1}{4N} \sum_{i,j = 1}^N \Big( \sum_{k = 1}^N \tilde{\alpha}_k (\hat{\nu}(\omega'_k \neq \omega_k) - \hat{\nu}(\omega'_k \neq (\tau_{ij}\omega)_k))\Big)_+^2 \\
    &\le \frac{1}{2N} \sum_{i,j = 1}^N (\tilde{\alpha}_i^2 + \tilde{\alpha}_j^2)
    \le 1.
\end{align*}
Using this as well as $\Gamma^+(g^2) \le 2g\Gamma^+(g)$ for all positive functions $g$, we have 
\[
\Gamma^+(f)^2 = \frac{1}{16} \Gamma^+(d_T(\cdot, A)^2)^2 \le \frac{1}{4} d_T(\cdot,A)^2 \Gamma^+(d_T(\cdot,A))^2 \le f.
\]

To show the second property, we proceed similarly to \cite[Proof of Lemma 1]{BLM09}. By \eqref{Sion} and the Cauchy--Schwarz inequality, we have
\[
f(\omega) = \frac{1}{4} \inf_{\nu \in \mathcal{M}(A)} \sum_{k = 1}^N \nu(\omega' : \omega'_k \neq \omega_k)^2.
\]
Assuming without loss of generality that $f(\omega) \ge f(\tau_{ij}\omega)$, choose $\hat{\nu} = \hat{\nu}_{ij} \in \mathcal{M}(A)$ such that the value of $f(\tau_{ij}\omega)$ is attained. It follows that
\begin{equation*}
    f(\omega) - f(\tau_{ij}\omega) \le \frac{1}{4}\sum_{k=1}^N \hat{\nu}(\omega'_k \neq \omega_k)^2 - \hat{\nu} (\omega'_k \neq (\tau_{ij}\omega)_k)^2 \le \frac{2}{4},
\end{equation*}
which finishes the proof.
\end{proof}

\begin{proof}[Proof of Theorem \ref{thm:convswor}]
Since $A$ is a symmetric set, $\omega \mapsto d_T(\omega,A)$ is a symmetric function, which follows by the definition
\[
d_T(\omega, A) = \sup_{\alpha \in \IR^n : \abs{\alpha} = 1} \inf_{\omega' \in A} \sum_{i = 1}^n \abs{\alpha_i} \eins_{\omega_i \neq \omega_i'}.
\]
As in \eqref{Sion}, we may use Sion's minimax theorem to rewrite $d_T$ as
\[
    d_T(\omega,A) = \inf_{\nu \in \mathcal{M}(A)} \sup_{\alpha \in \IR^n : \abs{\alpha} = 1} \sum_{k = 1}^n \alpha_k \nu( \omega' : \omega'_k \neq \omega_k ).
\]
As in the proof of Proposition \ref{TcdeMS}, let $\tilde{\nu}, \tilde{\alpha}$ be the parameters for which the value $d_T(\omega,A)$ is attained, and let $\hat{\nu}$, $\hat{\omega}_i'$ be minimizers of $\inf_{\omega_i'} \inf_{\nu \in \mathcal{M}(A)} \sum_{j = 1}^n \tilde{\alpha}_k \nu( \eta : \eta_k \neq (\omega_{i^c}, \omega_i')_k)$. We then have
\begin{align*}
\mathfrak{h}^+(d_T(\omega, A))^2 &= \frac{1}{2} \sum_{i = 1}^n \big( d_T(\omega,A) - \inf_{\omega_i'} d_T((\omega_{i^c}, \omega_i'), A \big)_+^2 \\
&\le \frac{1}{2} \sum_{i = 1}^n \big( \sum_{k = 1}^n \tilde{\alpha}_k \hat{\nu}( \eta : \eta_k \neq \omega_k) - \sum_{k = 1}^n \tilde{\alpha}_k \hat{\nu}( \eta : \eta_k \neq (\omega_{i^c}, \hat{\omega}_i')_k) \big)_+^2 \\
&\le \frac{1}{2} \sum_{i = 1}^n \tilde{\alpha}_i^2 = \frac{1}{2}
\end{align*}
Recall that by Proposition \ref{prop:LSIswor}, $\IP_{\kappa,n}$ satisfies an $\mathfrak{h}^+-\mathrm{LSI}(8(1-\frac{n}{N}))$ on the set of all symmetric functions. As a consequence, using \eqref{eqn:SS19-1.2} again, we obtain the sub-Gaussian estimate
\begin{align*}
    \IP_{\kappa,n}\big( d_T(\cdot, A) - \IE_{\kappa,n} d_T(\cdot, A) \ge t \big) \le \exp\Big( - \frac{t^2}{8(1-n/N)} \Big).
\end{align*}

In the next step, we observe that by the Poincaré inequality we have
\[
\mathrm{Var}(d_T(\cdot,A)) \le 8 (1-n/N)\IE_{\kappa,n} \mathfrak{h}^+(d_T(\cdot,A))^2 \le 4(1-n/N).
\]
Hence, Chebyshev's inequality leads to
\[
(\IE_{\kappa,n} d_T(\cdot, A))^2 \IP_{\kappa, n}\big(d_T(\cdot,A) - \IE_{\kappa,n} d_T(\cdot, A) \le - \IE_{\kappa,n} d_T(\cdot, A)\big) \le 4(1-n/N).
\]
Using that $\IP_{\kappa,n}(A) \ge 1/2$, we therefore have $\IE_{\kappa, n} d_T(\cdot, A) \le \sqrt{8(1-n/N)}$. Finally, since $(t-a)^2 \ge t^2/2 - a^2$ for any $a \in \IR$ we obtain for $t \ge \sqrt{8(1-n/N)}$
\[
\IP_{\kappa,n}(d_T(\cdot, A) \ge t) \le \exp\Big( - \frac{(t-\sqrt{8(1-n/N)})^2}{8(1-n/N)} \Big) \le \exp\Big( - \frac{t^2}{16(1-n/N)} + 1 \Big).
\]
For $t \le \sqrt{8(1-n/N)}$ the inequality holds trivially, which finishes the proof.
\end{proof}

\printbibliography

 \end{document}